\documentstyle[amscd,amssymb,xypic,verbatim,11pt]{amsart}
\xyoption{all}

\topmargin=-1.5cm
\oddsidemargin=-1cm
\evensidemargin=-.5cm
\textwidth=17.5cm
\textheight=23.5cm

\newcommand{\nc}{\newcommand}
\newcommand{\rnc}{\renewcommand}

\nc{\exto}[1]{\stackrel{#1}{\longrightarrow}}
\nc{\dlim}{{\mathop{\lim\limits_{\longrightarrow}}}}
\nc{\lan}{\big\langle}
\nc{\ran}{\big\rangle}

\nc{\kk}{{\mathsf{k}}}
\nc{\ix}{{\mathsf{i}}}
\nc{\jx}{{\mathsf{j}}}

\nc{\C}{{\mathbb{C}}}
\nc{\HH}{{\mathbb{H}}}
\nc{\LL}{{\mathbb{L}}}
\nc{\PP}{{\mathbb{P}}}
\nc{\RR}{{\mathbb{R}}}
\nc{\QQ}{{\mathbb{Q}}}
\nc{\ZZ}{{\mathbb{Z}}}

\nc{\CA}{{\mathcal{A}}}
\nc{\CB}{{\mathcal{B}}}
\nc{\CC}{{\mathcal{C}}}
\nc{\D}{{\mathcal{D}}}
\nc{\CE}{{\mathcal{E}}}
\nc{\CF}{{\mathcal{F}}}
\nc{\CG}{{\mathcal{G}}}
\nc{\CH}{{\mathcal{H}}}
\nc{\CJ}{{\mathcal{J}}}
\nc{\CL}{{\mathcal{L}}}
\nc{\CM}{{\mathcal{M}}}
\nc{\CN}{{\mathcal{N}}}
\nc{\CO}{{\mathcal{O}}}
\nc{\CQ}{{\mathcal{Q}}}
\nc{\CR}{{\mathcal{R}}}
\nc{\CS}{{\mathcal{S}}}
\nc{\CT}{{\mathcal{T}}}
\nc{\CU}{{\mathcal{U}}}
\nc{\CV}{{\mathcal{V}}}
\nc{\CW}{{\mathcal{W}}}
\nc{\CX}{{\mathcal{X}}}
\nc{\CY}{{\mathcal{Y}}}
\nc{\CZ}{{\mathcal{Z}}}
\nc{\CMo}{{\mathcal{M}^\circ}}
\nc{\Co}{{{C}^\circ}}

\nc{\BY}{{\overline{Y}}}
\nc{\BZ}{{\overline{Z}}}
\nc{\BYD}{{\overline{Y}{}^{|D|}}}
\nc{\OZ}{{\overline{Z}}}
\nc{\bg}{{\bar{g}}}

\nc{\bq}{{\mathbf{q}}}
\nc{\BD}{{\mathbf{D}}}
\nc{\BG}{{\mathbf{G}}}
\nc{\BL}{{\mathbf{L}}}
\nc{\BM}{{\mathbf{M}}}
\nc{\BP}{{\mathbf{P}}}
\nc{\BPr}{{\mathsf{P}}}
\nc{\BR}{{\mathbf{R}}}
\nc{\BRO}[1]{{{\mathbf{R}}^{\circ}_{#1}}}
\nc{\BRD}[1]{{{\mathbf{R}}^{|D|}_{#1}}}
\nc{\BRP}[1]{{{\mathbf{R}}^{1}_{#1}}}
\nc{\BRTP}[1]{{{\mathbf{\tilde{R}}}{}^{1}_{#1}}}
\nc{\BS}{{\mathbf{S}}}
\nc{\BT}{{\mathbf{T}}}
\nc{\BMS}{{{\mathbf{M}}^{{s}}}}
\nc{\BMSS}{{{\mathbf{M}}^{{ss}}}}
\nc{\BMZ}{{\mathbf{M}^{\circ}}}
\nc{\BCL}{{\mathbf{L}}}

\nc{\PCC}{{{}^\perp\CC}}

\nc{\Cl}{{\mathsf{Cliff}}}
\nc{\Clev}{{\mathop{\mathsf{Cliff}}^{\circ}}}

\nc{\FA}{{\mathfrak{A}}}
\nc{\FB}{{\mathfrak{B}}}
\nc{\FI}{{\mathfrak{I}}}
\nc{\FZ}{{\mathfrak{Z}}}

\nc{\TFA}{{\tilde{\mathfrak{A}}}}
\nc{\TFB}{{\tilde{\mathfrak{B}}}}

\nc{\fa}{{\mathfrak{a}}}
\nc{\fg}{{\mathfrak{g}}}
\nc{\fp}{{\mathfrak{p}}}
\nc{\FD}{{\mathfrak{D}}}
\nc{\FE}{{\mathfrak{E}}}
\nc{\FL}{{\mathfrak{L}}}
\nc{\FM}{{\mathfrak{M}}}
\nc{\FR}{{\mathfrak{R}}}
\nc{\FS}{{\mathsf{S}}}

\nc{\sfc}{{\mathsf{c}}}
\nc{\sfch}{{\mathsf{ch}}}
\nc{\sfh}{{\mathsf{h}}}

\nc{\SK}{{\mathsf{K}}}
\nc{\SO}{{\mathsf{O}}}
\nc{\SQ}{{\mathsf{Q}}}
\nc{\SPV}{{\mathsf{S}^+\mathsf{V}}}
\nc{\SMV}{{\mathsf{S}^-\mathsf{V}}}
\nc{\SPMV}{{\mathsf{S}^\pm\mathsf{V}}}
\nc{\SX}{{S_X}}
\nc{\SY}{{S_Y}}
\nc{\phipsi}{{q}}
\nc{\eps}{\varepsilon}

\nc{\pim}{{\pi_-}}
\nc{\pip}{{\pi_+}}

\nc{\BE}{{\overline{\CE}}}
\nc{\TE}{{\tilde{\CE}}}
\nc{\TQ}{{\tilde{Q}}}
\nc{\TCF}{{\tilde{\CF}}}
\nc{\TCG}{{\tilde{\CG}}}
\nc{\TCL}{{\tilde{\CL}}}
\nc{\TF}{{\tilde{F}}}
\nc{\TW}{{\tilde{W}}}
\nc{\TCB}{{\widetilde{\CB}}}
\nc{\TCC}{{\tilde{\CC}}}
\nc{\TCX}{{\tilde{\CX}}}
\nc{\TCY}{{\tilde{\CY}}}
\nc{\TPhi}{{\tilde{\Phi}}}
\nc{\OPhi}{{\bar{\Phi}}}
\nc{\txi}{{\tilde{\xi}}}
\nc{\tp}{{\tilde{p}}}
\nc{\tq}{{\tilde{q}}}
\nc{\tzeta}{{\tilde{\zeta}}}
\nc{\tpi}{{\tilde{\pi}}}
\nc{\Tsi}{{\tilde{\Sigma}}}

\nc{\HCB}{{\widehat{\CB}}}
\nc{\HE}{{\widehat{\CE}}}
\nc{\HS}{{\widehat{S}}}
\nc{\HX}{{\hat{X}}}
\nc{\hxi}{{\hat{\xi}}}

\nc{\UH}{{\mathcal{H}}}

\nc{\TM}{{\widetilde{M}}}
\nc{\TCM}{{\widetilde{\CM}}}
\nc{\TS}{{\widetilde{S}}}
\nc{\TU}{{\widetilde{U}}}
\nc{\TX}{{\widetilde{X}}}
\nc{\TY}{{\widetilde{Y}}}
\nc{\TYO}{{{\widetilde{Y}}^\circ}}
\nc{\barf}{{\bar{f}}}
\nc{\te}{{\tilde{e}}{}}
\nc{\tf}{{\tilde{f}}}
\nc{\tg}{{\tilde{g}}}
\nc{\ti}{{\tilde{\imath}}}
\nc{\tj}{{\tilde{\jmath}}}
\nc{\ty}{{\tilde{y}}}
\nc{\tphi}{{\tilde{\phi}}}
\nc{\hf}{{\hat{f}}}

\nc{\urho}{{\underline{\rho}}}

\nc{\LRA}{\Leftrightarrow}
\nc{\RA}{\Rightarrow}
\nc{\lotimes}{\mathbin{\mathop{\otimes}\limits^{\mathbb{L}}}}
\nc{\CEnd}{\mathop{\mathcal{E}\mathit{nd}}\nolimits}
\nc{\CExt}{\mathop{\mathcal{E}\mathit{xt}}\nolimits}
\nc{\CHom}{\mathop{\mathcal{H}\mathit{om}}\nolimits}
\nc{\RH}{\mathop{{\mathsf{R}}\Gamma}\nolimits}
\nc{\RGamma}{\mathop{{\mathsf{R}}\Gamma}\nolimits}
\nc{\RHom}{\mathop{\mathsf{RHom}}\nolimits}
\nc{\RCHom}{\mathop{\mathsf{R}\mathcal{H}\mathit{om}}\nolimits}
\nc{\RG}{\mathop{\mathsf{R\Gamma}}\nolimits}
\nc{\Hom}{\mathop{\mathsf{Hom}}\nolimits}
\nc{\Ext}{\mathop{\mathsf{Ext}}\nolimits}
\nc{\End}{\mathop{\mathsf{End}}\nolimits}
\nc{\Tor}{\mathop{\mathsf{Tor}}\nolimits}
\nc{\Tordim}{\mathop{\mathsf{Tor}\text{\rm-}\mathsf{dim}}\nolimits}
\nc{\Hilb}{\mathop{\mathsf{Hilb}}\nolimits}
\nc{\Spec}{\mathop{\mathsf{Spec}}\nolimits}
\nc{\Proj}{\mathop{\mathsf{Proj}}\nolimits}
\nc{\Pic}{\mathop{\mathsf{Pic}}\nolimits}

\nc{\Tw}{\mathop{\mathsf{Tw}}\nolimits}
\nc{\Ker}{\mathop{\mathsf{Ker}}\nolimits}
\nc{\Coker}{\mathop{\mathsf{Coker}}\nolimits}
\nc{\codim}{\mathop{\mathsf{codim}}\nolimits}
\nc{\sing}{{\mathsf{sing}}}
\nc{\supp}{\mathop{\mathsf{supp}}}
\nc{\perf}{{\mathsf{perf}}}
\nc{\rank}{\mathop{\mathsf{rank}}}
\nc{\Pf}{{\mathsf{Pf}}}
\nc{\Gr}{{\mathsf{Gr}}}
\nc{\OGr}{{\mathsf{OGr}}}
\nc{\Flag}{{\mathsf{Fl}}}
\nc{\Kosz}{{\mathsf{Kosz}}}
\nc{\LGr}{{\mathsf{LGr}}}
\nc{\GTGr}{{\mathsf{G_2Gr}}}
\nc{\GTF}{{\mathsf{G_2F}}}
\nc{\OF}{{\mathsf{OF}}}
\nc{\Fl}{{\mathsf{Fl}}}
\nc{\Bl}{{\mathsf{Bl}}}
\nc{\GL}{{\mathsf{GL}}}
\nc{\PGL}{{\mathsf{PGL}}}
\nc{\SL}{{\mathsf{SL}}}
\nc{\SP}{{\mathsf{Sp}}}
\nc{\Spin}{{\mathsf{Spin}}}
\nc{\Tot}{{\mathsf{Tot}}}
\nc{\ev}{{\mathsf{ev}}}
\nc{\od}{{\mathsf{odd}}}
\nc{\coev}{{\mathsf{coev}}}
\nc{\id}{{\mathsf{id}}}
\nc{\opp}{{\mathsf{opp}}}
\nc{\PS}{{{\PP^3}}}
\nc{\Qu}{{{Q^3}}}
\nc{\tdim}{\mathop{\Tor\dim}}
\nc{\ecart}{{\fbox{$\scriptstyle\mathsf{EC}$}}}
\nc{\ad}{{\mathop{\mathsf ad}}}
\nc{\gr}{{\mathop{\mathsf gr}}}
\nc{\qgr}{{\mathop{\mathsf qgr}}}
\nc{\tor}{{\mathop{\mathsf tor}}}
\rnc{\mod}{{\mathop{\mathsf mod}}}
\nc{\Mod}{{\mathop{\mathsf Mod}}}
\nc{\Coh}{{\mathop{\mathsf Coh}}}
\nc{\Ab}{{\mathop{\mathcal{A}\mathit{b}}}}
\nc{\QCoh}{{\mathop{\mathsf QCoh}}}

\nc{\AAV}{{\mathcal{AAV}}}

\nc{\Rep}{{\mathsf{Rep}}}

\nc{\Cubics}{{{\mathcal{S}}_3}}
\nc{\VFT}{{{\mathcal{S}}_{14}}}
\nc{\VFTE}{{{\mathcal{N}}_{\mathrm{reg,sm}}}}
\nc{\MX}{{\CM_X}}
\nc{\MY}{{\CM_Y}}
\nc{\MYE}{{\CM_{Y,\CE}}}
\nc{\Yd}{{Y_d}}
\nc{\Yfive}{{Y_5}}
\nc{\Xg}{{X_{2g-2}}}
\nc{\Xtt}{{X_{22}}}
\nc{\Xst}{{X_{16}}}
\nc{\Xtw}{{X_{12}}}
\nc{\Xe}{{X_{8}}}
\nc{\Xf}{{X_{4}}}

\nc{\git}{{/\!\!/\!{}_\chi}}

\theoremstyle{plain}

\newtheorem{theorem}{Theorem}[section]

\newtheorem{lemma}[theorem]{Lemma}
\newtheorem{proposition}[theorem]{Proposition}
\newtheorem{corollary}[theorem]{Corollary}

\theoremstyle{definition}

\theoremstyle{remark}

\newtheorem{remark}[theorem]{Remark}

\newenvironment{proof}{\noindent{\sf Proof:}}{\qed\medskip}

\title[Scheme of lines on a family of quadrics: geometry and derived category]%
{Scheme of lines on a family of 2-dimensional quadrics:\\[1em]geometry and derived category}
\author{Alexander Kuznetsov}
\address{\sloppy
\parbox{0.9\textwidth}{
Algebra Section, Steklov Mathematical Institute,
8 Gubkin str., Moscow 119991 Russia
\hfill\\[5pt]
The Poncelet Laboratory, Independent University of Moscow
\hfill
}\bigskip}
\email{akuznet@@mi.ras.ru}
\date{}
\thanks{I was partially supported by
RFFI grants
08-01-00297,
09-01-12170,
10-01-93110,
10-01-93113,
NSh-4713.2010.1,
and
Russian Presidential grant for young scientists MD-2712.2009.1.}

\begin{document}

\maketitle

\begin{abstract}
Given a generic family $Q$ of 2-dimensional quadrics over a smooth 3-dimensional base $Y$
we consider the relative Fano scheme $M$ of lines of it. The scheme $M$ has a structure of a generically
conic bundle $M \to X$ over a double covering $X \to Y$ ramified in the degeneration locus of $Q \to Y$.
The double covering $X \to Y$ is singular in a finite number of points (corresponding to the points $y \in Y$
such that the quadric $Q_y$ degenerates to a union of two planes), the fibers of $M$ over such points are
unions of two planes intersecting in a point.

The main result of the paper is a construction of a semiorthogonal decomposition for the derived category
of coherent sheaves on $M$. This decomposition has three components, the first is the derived category of a small resolution $X^+$
of singularities of the double covering $X \to Y$, the second is a twisted resolution of singularities of $X$
(given by the sheaf of even parts of Clifford algebras on $Y$), and the third is generated by a completely
orthogonal exceptional collection.
\end{abstract}

\section{Introduction}

The subject of this note is a description of the structure of the derived category
of coherent sheaves on the relative scheme of lines for a family of 2-dimensional quadrics.
We had two motivations for investigation of this category --- first of all it has
an interesting structure and exhibits some interesting features. For example,
it combines the minimal resolution of singularities and the twisted resolution
of singularities of a certain double covering of the base of the family.

The second, and the most important motivation, comes from investigation
of the derived categories of some special double covers of $\PP^3$
and their relation to Enriques surfaces. See the companion
paper~\cite{IK} for details.

The precise formulation of the main result of the paper is the following.
Consider a family of 2-dimensional quadrics $q:Q \to Y$.
This means that we are given a projectivization of a rank 4 vector bundle
$\CV$ on $Y$ and a divisor $Q \subset \PP_{Y}(\CV)$ of relative degree 2
which is flat over $Y$. Such divisor is given by a line subbundle $\CL \subset S^2\CV^\vee$.

Given this we consider {\em the relative Fano scheme of lines of $Q$ over $Y$}.
By definition this is the zero locus on the relative Grassmannian $\Gr_{Y}(2,\CV)$
of the global section
$$
s \in \Gamma(\Gr_Y(2,\CV),\CL^\vee\otimes S^2\CU^\vee),
$$
where $\CU \subset \CV$ is the tautological subbundle on the Grassmannian.
We denote this relative Fano scheme by $M$.
The fibers $M_y$ of the projection $\rho:M \to Y$ have the following structure
\begin{itemize}
\item $M_y$ is a disjoint union of two smooth conics, if the quadric $Q_y$ is smooth;
\item $M_y$ is a single smooth conic (with a nonreduced scheme structure), if the quadric $Q_y$ has corank~1;
\item $M_y$ is a union of two planes intersecting in a point, if the quadric $Q_y$ has corank 2;
\item $M_y$ is a single plane (with a nonreduced scheme structure), if the quadric $Q_y$ has corank 3.
\end{itemize}
From now on we assume that the family $Q \to Y$ is sufficiently generic, that is
generic fiber is smooth, and the codimension of the locus $D_r \subset Y$ of quadrics
of corank $r$ equals $r(r+1)/2$.
In this case we have the Stein factorization for the morphism $\rho:M \to Y$:
$$
\xymatrix{
M \ar[rr]^\mu \ar[dr]_\rho && X \ar[dl]^f \\ & Y
}
$$
where $f:X \to Y$ is the double covering ramified in the divisor $D_1$, and
$\mu:M \to X$ is generically a conic bundle.

The main result of this paper is a description of the derived category of $M$ when $\dim Y = 3$.
Then the above genericity assumptions imply that $D_3 = \emptyset$
and $D_2$ consists of a finite number $N$ of isolated points $y_1,\dots,y_N$.
Additionally we assume that $D_1$ has an ordinary double point
(an ODP or a node for short) in each of $y_i$. The last assumption
is equivalent to smoothness of $M$ if $Q$ is smooth.

To state the answer we need the following ingredients.
First, consider the sheaf of even parts of Clifford algebras $\CB_0$ on $Y$
associated with the family $Q \to Y$ (see~\cite{K08a} for details).
As an $\CO$-module it is given by
$$
\CB_0 = \CO_Y \oplus \Lambda^2\CV\otimes\CL \oplus \Lambda^4\CV\otimes\CL^2
$$
with the Clifford multiplication. If $Q$ is smooth then the category
$\D^b(Y,\CB_0)$ is also smooth and can be thought of as a twisted noncommutative
resolution of the double covering $X$.

Second, note that $X$ has $N$ isolated ordinary double points over $D_2$,
so being 3-dimensional it has $2^N$ small resolutions of singularities in the category
of Moishezon varieties. To fix one of these resolutions we should choose for each
point $y_i \in D_2$ one of the planes in the corank 2 quadric $Q_{y_i}$, or equivalently
one of the planes in the fiber $M_{y_i}$ of $M$ over $Y$.

Let us pick one of these resolutions and denote it by $\sigma_+:X^+ \to X$.
Let us denote the planes in $M_{y_i}$ corresponding to this choice by $\Sigma_i^+$,
and the complementary planes by $\Sigma_i^-$, so that
$M_{y_i} = \Sigma_i^+ \cup \Sigma_i^-$.

The main result of this paper is the following

\begin{theorem}\label{mt}
Assume that $Q \to Y$ is a family of quadrics,
$Y$ and $Q$ are smooth, $\dim Y = 3$, and the degeneration locus $D_1$
has a finite number of ordinary double points $\{y_1,\dots,y_N\} = D_2$.
Then the relative Fano scheme $M$ of lines of $Q$ over $Y$ is smooth and
there is a semiorthogonal decomposition
$$
\D^b(M) = \langle \D^b(X^+), \D^b(Y,\CB_0), \{ \CO_{\Sigma_i^+} \}_{i=1}^N \rangle.
$$
Here the third component is a completely orthogonal exceptional collection.
\end{theorem}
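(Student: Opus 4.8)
The strategy is to realize $M$ as the relative Fano scheme of lines and build the decomposition in stages, exploiting the conic-bundle structure $\mu: M \to X$ over the double cover.

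First I would analyze the geometry of $\mu: M \to X$ away from the nodes. Over the open locus $X^\circ = X \setminus \{x_1,\dots,x_N\}$ (where $x_i$ is the node over $y_i$), the map $\mu$ is a genuine flat conic bundle, degenerating over the preimage of $D_1$. The standard theory of conic bundles (as in~\cite{K08a}) gives a semiorthogonal decomposition of $\D^b$ of the total space in terms of $\D^b(X^\circ)$ and $\D^b(X^\circ, \CB_0^\circ)$, where $\CB_0$ is the sheaf of even Clifford algebras pulled back from $Y$; the key point is that the even Clifford algebra of the quadric bundle $Q \to Y$, viewed over $X$, controls the conic bundle $M \to X$. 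The first main task is to understand what happens over the nodes: the fiber $M_{y_i} = \Sigma_i^+ \cup \Sigma_i^-$ is a union of two planes meeting in a point, and $\mu$ contracts nothing but is not flat there in the naive sense — rather $M \to X$ near $x_i$ looks like the blow-up structure relating the two small resolutions $X^\pm$ to $X$.

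Second, I would identify $M$ birationally with the fiber product or a blow-up: concretely, I expect that $M$ contains both small resolutions $X^+$ and $X^-$ as the loci of lines lying in a chosen ruling, glued along the conic bundle over $X^\circ$, and that there is a morphism $M \to X^+$ realizing $M$ as (close to) the blow-up of $X^+$ along the exceptional curves, or alternatively $M$ is the "universal" object dominating both $X^+$ and $X^-$. The planes $\Sigma_i^-$ are the fibers of $M \to X^+$ over the exceptional points of $\sigma_+$, while $\Sigma_i^+$ map isomorphically into... — sorting out exactly which plane goes where is the delicate bookkeeping. Using the blow-up/projective-bundle formula for $\D^b(M)$ relative to $X^+$, together with the conic bundle decomposition over $X^\circ$, should produce the three pieces: $\D^b(X^+)$ comes from pulling back along $M \to X^+$; $\D^b(Y,\CB_0)$ comes from the Clifford component of the conic bundle (one must check the twist over the nodes is the untwisted global $\CB_0$ on $Y$, not just on $X^\circ$, which is where the ODP hypothesis and the explicit form $\CB_0 = \CO_Y \oplus \Lambda^2\CV\otimes\CL \oplus \Lambda^4\CV\otimes\CL^2$ enter); and the exceptional objects $\CO_{\Sigma_i^+}$ are the "extra" generators coming from the discrepancy between $M$ and the honest blow-up, i.e. the second plane in each nodal fiber.

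Third, I would verify semiorthogonality and generation. For semiorthogonality: the mutual orthogonality of $\D^b(X^+)$ and $\D^b(Y,\CB_0)$ follows from the conic bundle decomposition over $X^\circ$ plus a codimension argument across the finite set of nodes (using that $M$ is smooth, so no Ext-classes can be supported on a set of codimension $\geq 2$ in an unexpected way); the semiorthogonality of the $\CO_{\Sigma_i^+}$ against the first two components is a local computation on $\Sigma_i^+ \cong \PP^2$ with normal bundle $\CO(-1)^{\oplus ?}$ in $M$, giving $\RHom(\CO_{\Sigma_i^+}, \text{pullback from } X^+) = 0$ and similarly against the Clifford piece. Complete orthogonality of the collection $\{\CO_{\Sigma_i^+}\}$ is immediate since the $\Sigma_i^+$ are disjoint. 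For generation I would use that the three subcategories together generate $\D^b$ over the open part by the conic bundle result, then show the remaining "boundary" contributions are absorbed — the standard way is to check that the right orthogonal to the proposed decomposition is supported on the nodal fibers and then killed by an explicit resolution of $\CO_{x_i}$-type objects in terms of the $\CO_{\Sigma_i^\pm}$ and $\CB_0$-modules.

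The main obstacle I anticipate is the local analysis at the nodes: one must pin down the precise local model of $M \to X$ near $x_i$ (a small resolution of the $3$-fold ODP, with the two planes $\Sigma_i^\pm$ as the two types of exceptional-plus-transverse loci), compute the normal bundles of $\Sigma_i^\pm$ in $M$, and match the "defect" of the blow-up formula exactly with the exceptional objects $\CO_{\Sigma_i^+}$ — including checking that it is $\Sigma_i^+$ and not $\Sigma_i^-$ that appears, which is forced by the choice of resolution $\sigma_+$. This is where the assumption that $D_1$ has honest ordinary double points (equivalently, $M$ smooth) is essential, and where the bulk of the genuine computation lies.
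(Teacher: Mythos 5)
You have correctly identified the ingredients --- the conic bundle structure of $M \to X$, the Clifford sheaf $\CB_0$, the small resolution $X^+$, the normal bundle $\CO_{\Sigma}(-1)^{\oplus 2}$, and that $\{\CO_{\Sigma_i^+}\}$ is completely orthogonal by disjointness --- but your proposed bridge between them has a genuine gap. You posit ``a morphism $M \to X^+$ realizing $M$ as (close to) the blow-up of $X^+$,'' and then appeal to a blow-up or projective-bundle formula for $\D^b(M)$ over $X^+$. No such morphism exists. The Stein factorization only gives $M \to X$, and $X^+$ sits \emph{over} $X$, not under $M$. The missing structural step is a \emph{flip}: because $\CN_{\Sigma_i^+/M} \cong \CO(-1)^{\oplus 2}$, blowing up $M$ along $\sqcup \Sigma_i^+$ produces exceptional divisors $E_i \cong \PP^2 \times \PP^1$ with normal bundle $\CO(-1,-1)$, which can be blown down the other way onto lines $L_i$, yielding a (Moishezon) variety $M^+$ with $\D^b(M) = \langle \D^b(M^+), \{\CO_{\Sigma_i^+}\}\rangle$ by Bondal--Orlov. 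It is $M^+$, not $M$, that admits a morphism to $X^+$, and a nontrivial relative-MMP argument (ruling out flips by Kawamata's classification of $\PP^2$-centers and ruling out divisorial contractions by comparison with the smooth locus) is needed to show that $M^+ \to X^+$ is a genuine $\PP^1$-fibration. Then Bernardara's theorem gives $\D^b(M^+) = \langle \D^b(X^+), \D^b(X^+, \CB^+)\rangle$ for an Azumaya algebra $\CB^+$, and the final nontrivial identification $\D^b(X^+,\CB^+) \cong \D^b(Y,\CB_0)$ uses the explicit bundles $\CS_k$ on $M$ and their flipped versions $\CR_k$ on $M^+$.

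Your proposed alternative --- extending the conic-bundle decomposition across the nodes by ``a codimension argument'' --- will not close the gap, because the nodes are precisely where the fiber jumps from a conic to a union of two planes and where the decomposition acquires an extra exceptional summand; there is no way to ``absorb the boundary contributions'' by a support argument without already knowing the global structure that the flip provides. Likewise, your picture of $X^\pm$ sitting inside $M$ as ``loci of lines lying in a chosen ruling'' does not match the geometry: what actually sits over $X^+$ after the flip is $M^+$, whose fiber over each exceptional curve $C_i$ is the Hirzebruch surface $\Tsi_i^-$ (the blow-up of $\Sigma_i^-$ at the point $P_i$), and the $\PP^1$-fibration structure over $C_i$ is the ruling of $\Tsi_i^-$, not a plane. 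The embedding $\D^b(Y,\CB_0) \hookrightarrow \D^b(M)$ is also constructed directly in the paper by an explicit functor $\Phi(\CF) = \CS_0 \otimes_{\CB_0} \rho^*\CF$ using canonical $\CB_0$-modules $\CS_k$ on $M$ built from the Clifford action, rather than by patching the conic-bundle decomposition over $X^\circ$ across the nodes --- a patching which would itself require justification that your sketch does not supply.
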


The proof goes as follows.
In section~\ref{s-pl} we prove smoothness of $M$ and investigate the local structure
of $M$ around the planes $\Sigma_i^\pm$. In particular, we check that the sheaves
$\CO_{\Sigma_i^+}$ form a completely orthogonal exceptional collection in $\D^b(M)$.
In section~\ref{s-ca} we recall some facts about the sheaf of even parts of Clifford
algebras $\CB_0$ and construct a fully faithful embedding $\D^b(Y,\CB_0) \to \D^b(M)$.
In section~\ref{s-fl} we show that there is a birational transformation of $M$,
a flip in $N$ planes $\Sigma_i^+$, transforming it into a $\PP^1$-fibration
$\mu_+:M^+ \to X^+$ over a small resolution $X^+ \to X$.
This gives an identification of the orthogonal to the collection
$\{ \CO_{\Sigma_i^+} \}_{i=1}^N$ in $\D^b(M)$ with $\D^b(M^+)$.
In section~\ref{s-mp} we construct a fully faithful embedding $\D^b(X^+) \to \D^b(M^+)$
and identify the complement with $\D^b(Y,\CB_0)$.

In the last section~\ref{s-f} we discuss another way of proving Theorem~\ref{mt}
and suggest some further directions of investigation.

{\bf Acknowledgement:} I would like to thank L.Katzarkov, D.Orlov, and Yu.Prokhorov for helpful discussions.

\section{Geometry of $M$}\label{s-pl}

For each quadric $Q_y$ in the family $Q \to Y$ denote by $K_y \subset \CV_y$ the kernel
of the corresponding quadratic form (thus $\PP(K_y)$ is the singular locus of $Q_y$).
Note that the differential of the section $s \in \Gamma(Y,\CL^{-1}\otimes S^2\CV^\vee)$
at $y$ gives a linear map $T_yY \to S^2\CV_y^\vee$. Composing it with the natural projection
$S^2\CV_y^\vee \to S^2K_y^\vee$ we obtain a map
$$
\kappa_y:T_yY \to S^2K_y^\vee.
$$
In term of these maps one can check the smoothness of $Q$ and $M$.

\begin{proposition}\label{qmsm}
Assume that $Y$ is smooth. Then
\begin{enumerate}
\item $Q$ is smooth if and only if for any $y \in Y$ and any subspace $K \subset K_y$ with $\dim K \le 1$,
the composition $\xymatrix@1{T_yY \ar[r] & S^2K_y^\vee \ar[r] & S^2K^\vee}$ is surjective;
\item $M$ is smooth if and only if for any $y \in Y$ and any embedding $K \to K_y$ with $\dim K \le 2$,
the composition $\xymatrix@1{T_yY \ar[r] & S^2K_y^\vee \ar[r] & S^2K^\vee}$ is surjective.
\end{enumerate}
\end{proposition}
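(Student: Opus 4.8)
The plan is to reduce both parts to a pointwise transversality statement for the equations defining $Q$ and $M$, and then to rewrite the resulting linear‑algebra condition in the stated form. Since $Y$ is smooth and the projections $\PP_Y(\CV)\to Y$ and $\Gr:=\Gr_Y(2,\CV)\to Y$ are smooth, the total spaces $\PP_Y(\CV)$ and $\Gr$ are smooth, of dimensions $\dim Y+3$ and $\dim Y+4$; and $Q\subset\PP_Y(\CV)$ is a divisor while $M\subset\Gr$ is the zero locus of the section $s\in\Gamma(\Gr,\CL^\vee\otimes S^2\CU^\vee)$ of a rank $3$ bundle. The classification of the fibres $M_y$ recalled above, together with the genericity hypothesis $\codim D_r=r(r+1)/2$, shows by a dimension count that $M$ has pure dimension $\dim Y+1=\dim\Gr-3$, hence is a complete intersection of expected codimension; so $M$ is smooth at $p$ exactly when $d_ps\colon T_p\Gr\to\bigl(\CL^\vee\otimes S^2\CU^\vee\bigr)\big|_p$ is surjective, and $Q$ is smooth at $p$ exactly when the differential of a local equation of $Q$ is nonzero at $p$. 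Since twisting by $\CL$ does not affect these conditions at points of $Q$, resp.\ $M$, I may trivialize $\CL$ locally and treat $s$ as the family of quadratic forms $q_y\in S^2\CV_y^\vee$.

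Next I would compute these differentials at a point $p$ lying over $y\in Y$; write $ds\colon T_yY\to S^2\CV_y^\vee$ for the differential of $s$, so that $\kappa_y$ is $ds$ followed by the restriction $S^2\CV_y^\vee\to S^2K_y^\vee$. For $Q$, the point $p$ is a point $[v]\in\PP(\CV_y)$ with $q_y(v)=0$; in $T_p\PP_Y(\CV)=T_yY\oplus T_{[v]}\PP(\CV_y)$ the fibre‑direction component of the differential is the differential of $q_y$ at $[v]$, which vanishes precisely when $v\in K_y$, and the base‑direction component is the functional $t\mapsto ds(t)(v,v)$ on $T_yY$. Thus $Q$ is singular at $[v]$ iff $v\in K_y$ and that functional vanishes, i.e.\ (with $K=\langle v\rangle$) iff the composition $T_yY\to S^2K_y^\vee\to S^2K^\vee$ is not surjective. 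For $M$, the point $p$ is a $q_y$‑isotropic $2$‑plane $U\subset\CV_y$; in $T_p\Gr=T_yY\oplus\Hom(U,\CV_y/U)$ the base‑direction component of $d_ps$ is $t\mapsto ds(t)|_U$, and the Grassmannian‑direction component sends $\phi$ to the quadratic form $u\mapsto b_y(u,\tilde\phi(u))$ on $U$, where $b_y$ is the polar form of $q_y$ and $\tilde\phi$ any lift of $\phi$ (well defined because $U$ is isotropic). Since $K_y$ is the radical of $b_y$, a short argument identifies the image of this last map with $\Ker\bigl(S^2U^\vee\to S^2(U\cap K_y)^\vee\bigr)$; hence $d_ps$ is surjective iff $T_yY\to S^2U^\vee\to S^2(U\cap K_y)^\vee$ is surjective, which, restriction being transitive, is the composition $T_yY\to S^2K_y^\vee\to S^2(U\cap K_y)^\vee$.

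It then remains to let $p$ vary and reconcile the quantifier. For (1): points $[v]\in Q$ with $v\notin K_y$ are automatically smooth, while as $[v]$ ranges over $\PP(K_y)\subset Q_y$ the line $\langle v\rangle$ ranges over all $1$‑dimensional subspaces of $K_y$, so (adjoining the vacuous case $\dim K=0$, where $S^2K^\vee=0$) the stated criterion is precisely ``$Q$ smooth at every point''. For (2): as $p=(y,U)$ ranges over $M$, the classification of fibres shows that $U\cap K_y$ runs over the zero subspace (from a smooth $Q_y$; a vacuous condition), over every line in $K_y$ (from a line through the vertex of a corank‑$1$ quadric, or a generic line in a plane component of a corank‑$2$ quadric), and over every $2$‑plane in $K_y$ (from a line in a plane component of a corank‑$2$ or corank‑$3$ quadric); the only remaining pairs $(y,K)$ in the statement, namely $\dim K\le 1$ with $\dim K_y\ge 3$, impose conditions implied by further restriction $S^2(K')^\vee\to S^2K^\vee$ from a $2$‑plane $K'$ with $K\subset K'\subset K_y$. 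Hence the universal criterion of (2) is equivalent to surjectivity of $d_ps$ everywhere on $M$, i.e.\ to smoothness of $M$.

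The main obstacle I expect is the second step: the linear‑algebra lemma identifying the image of the Grassmannian‑direction differential with $\Ker\bigl(S^2U^\vee\to S^2(U\cap K_y)^\vee\bigr)$, together with the care needed in the last step to check that quantifying over all pairs $(y,K)$ — rather than only over those of the form $(y,U\cap K_y)$ arising from actual points of $M$ — gives an equivalent condition.
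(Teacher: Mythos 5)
Your proof is correct and carries out exactly the ``simple local calculation'' that the paper invokes without detail: trivialize $\CV$ and $\CL$, decompose $T_p\PP_Y(\CV)$ resp.\ $T_p\Gr_Y(2,\CV)$ into vertical and horizontal parts, compute the two components of the differential of the defining equation/section, and translate surjectivity of that differential into surjectivity of $T_yY\to S^2K_y^\vee\to S^2K^\vee$. The only thing the paper leaves unsaid, which you rightly make explicit, is that identifying smoothness of $M$ with everywhere-surjectivity of $ds$ uses that $M$ has the expected pure dimension $\dim Y+1$ (a complete-intersection point); your linear-algebra identification of the image of the Grassmannian-direction differential with $\Ker\bigl(S^2U^\vee\to S^2(U\cap K_y)^\vee\bigr)$ is correct, as is your quantifier bookkeeping showing that the subspaces $K$ not of the form $U\cap K_y$ impose only redundant conditions.
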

\begin{proof}
The question is local so we can assume that $\CV$ is a trivial bundle, $\CV \cong V\otimes\CO_Y$.
Then the result is a simple local calculation.
\end{proof}

\begin{remark}
This result generalizes to arbitrary relative isotropic Grassmannians of families
of quadrics of arbitrary dimension. The smoothness of the Grassmannian of $k$-dimensional
subspaces is equivalent to the surjectivity of the corresponding map for all $K$ with $\dim K \le k$.
\end{remark}

\begin{corollary}\label{msm}
Assume that $Y$ is smooth and $D_3 = \emptyset$.
Then $M$ is smooth if and only if $Q$ is smooth and for any $y \in D_2$ the map
$\kappa_y:T_yY \to S^2K_y^\vee$ is surjective.
\end{corollary}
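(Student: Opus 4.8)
The plan is to read off the statement from Proposition~\ref{qmsm}(2) by keeping careful track of which subspaces $K \subset K_y$ can occur under the hypothesis $D_3 = \emptyset$. First I would record the elementary but crucial fact that, for any subspace $K \subset K_y$, the restriction map $S^2 K_y^\vee \to S^2 K^\vee$ on quadratic forms is surjective; consequently, if the composition $T_yY \to S^2 K_y^\vee$ is surjective, then so is the composition $T_yY \to S^2 K_y^\vee \to S^2 K^\vee$ for every subspace $K \subset K_y$. Next I would note that $D_3 = \emptyset$ means $\dim K_y \le 2$ for all $y \in Y$, with equality exactly when $y \in D_2$; so an embedding $K \hookrightarrow K_y$ with $\dim K = 2$ exists only for $y \in D_2$, and then its image is all of $K_y$, so the relevant map is, up to an isomorphism of the target, just $\kappa_y$.

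With these two remarks in place the proof is a short case analysis. For the ``if'' direction, assume $Q$ is smooth and $\kappa_y$ is surjective for every $y \in D_2$, and check the criterion of Proposition~\ref{qmsm}(2): fix $y \in Y$ and an embedding $K \hookrightarrow K_y$ with $\dim K \le 2$. If $\dim K \le 1$, surjectivity of $T_yY \to S^2 K^\vee$ holds because it is one of the conditions equivalent to smoothness of $Q$ in Proposition~\ref{qmsm}(1). If $\dim K = 2$, then by the second remark $y \in D_2$ and the map in question is $\kappa_y$, which is surjective by assumption. Hence $M$ is smooth.

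For the ``only if'' direction, assume $M$ is smooth, so the criterion of Proposition~\ref{qmsm}(2) holds for all $y$ and all $K \hookrightarrow K_y$ with $\dim K \le 2$. Restricting attention to those $K$ with $\dim K \le 1$ gives precisely the criterion of Proposition~\ref{qmsm}(1), so $Q$ is smooth; and specializing to $K = K_y$ for $y \in D_2$ gives surjectivity of $\kappa_y$. This completes both implications.

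I do not anticipate a genuine obstacle: everything reduces to Proposition~\ref{qmsm} together with the surjectivity of restriction of quadratic forms. The only place that calls for a little care is matching the ``$\dim K \le 1$'' part of the smoothness criterion for $M$ with the full smoothness criterion for $Q$, and checking that no $2$-dimensional $K$ can sit inside $K_y$ unless $y \in D_2$ — both of which are immediate once $D_3 = \emptyset$ is invoked.
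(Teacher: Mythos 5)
Your argument is correct and is exactly the deduction from Proposition~\ref{qmsm} that the paper leaves implicit (the corollary is stated with no written proof). The key observations—that $D_3=\emptyset$ forces $\dim K_y\le 2$ with equality precisely on $D_2$, so a $2$-dimensional $K\hookrightarrow K_y$ must be all of $K_y$ and the composed map is then $\kappa_y$, while the $\dim K\le 1$ case is exactly the smoothness criterion for $Q$—are the intended ones, and your case analysis handles both directions cleanly.
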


Another consequence of smoothness of $Q$ is smoothness of $D_1 \setminus D_2$.
On the other hand, the points of $D_2$ are always singular on $D_1$.
In fact they are ordinary double points if $M$ is smooth.

\begin{lemma}
Assume that $Q$ is smooth and $\dim Y = 3$. Then
$M$ is smooth if and only if $D_2$ is a finite number of points and
any point of $D_2$ is an ordinary double point of $D_1$.
\end{lemma}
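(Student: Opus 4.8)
\emph{The plan is to reduce the statement to the local theory of symmetric determinantal varieties.} Since the question is local on $Y$, fix $y_0\in D_1$, set $k=\mathrm{corank}(Q_{y_0})=\dim K_{y_0}$, trivialise $\CV\cong V\otimes\CO_Y$ near $y_0$ and split $V=V'\oplus K_{y_0}$ so that the form at $y_0$ is nondegenerate on $V'$ and vanishes on $K_{y_0}$. The family of quadratic forms then becomes a symmetric block matrix $A(y)$ with diagonal blocks $P(y)$ (of size $4-k$) and $S(y)$ (of size $k$) and off-diagonal block $R(y)$, where $P(y_0)$ is invertible and $R(y_0)=0=S(y_0)$. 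As $P(y)$ stays invertible near $y_0$, the Schur complement $\widetilde S(y)=S(y)-R(y)^{\top}P(y)^{-1}R(y)\in S^2K_{y_0}^\vee$ is defined near $y_0$, vanishes at $y_0$, and (since $R(y_0)=0$) has differential $d\widetilde S|_{y_0}=\kappa_{y_0}$. From $\det A(y)=\det P(y)\cdot\det\nolimits_k\widetilde S(y)$ and $\ker A(y)\cong\ker\widetilde S(y)$ one obtains, near $y_0$,
$$
D_1=\{\det\nolimits_k\widetilde S=0\},\qquad D_r=\{\rank\widetilde S(y)\le k-r\}.
$$
Moreover $\det\nolimits_k\widetilde S\not\equiv 0$ near $y_0$, since otherwise every $Q_y$ would be singular and $Q$ would be singular along the section $y\mapsto\mathrm{Sing}(Q_y)$.

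The proof then turns on two elementary facts. \emph{(1)} $\det\nolimits_k$ is homogeneous of degree $k$ on $S^2K_{y_0}^\vee$ and $\widetilde S(y)=\kappa_{y_0}(y-y_0)+O(|y-y_0|^2)$, so $\det\nolimits_k\widetilde S$ vanishes to order $\ge k$ at $y_0$ with leading form $\det\nolimits_k\circ\,\kappa_{y_0}$; hence $\mathrm{mult}_{y_0}D_1\ge k$, with equality iff $\det\nolimits_k\circ\,\kappa_{y_0}\not\equiv0$, in which case it is the tangent cone of $D_1$ at $y_0$. \emph{(2)} When $k=2$ one has $\dim T_{y_0}Y=3=\dim S^2K_{y_0}^\vee$ and $\det\nolimits_2$ is a nondegenerate quadratic form, so $\det\nolimits_2\circ\,\kappa_{y_0}$ is a rank-$3$ quadratic form on $T_{y_0}Y$ iff $\kappa_{y_0}$ is an isomorphism; combined with \emph{(1)}, for $k=2$ the divisor $D_1$ has an ordinary double point at $y_0$ iff $\kappa_{y_0}$ is an isomorphism, and then $\widetilde S$ is \'etale at $y_0$, so $D_2=\widetilde S^{-1}(0)=\{y_0\}$ near $y_0$.

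For the forward implication, assume $M$ is smooth. I would first show $D_3=\emptyset$ (note that smoothness of $Q$ alone does \emph{not} give this): if some $y_0$ had $k\ge3$, pick a $3$-dimensional $K'\subseteq K_{y_0}$; by Proposition~\ref{qmsm}(2) the image $W$ of $T_{y_0}Y$ in $S^2K'^\vee$ has $\dim W\le3$ and must surject onto $S^2K^\vee$ for every hyperplane $K\subset K'$, i.e.\ $\PP(W)=\PP^2$ must avoid the cubic hypersurface $\{\rank\le2\}\subset\PP(S^2K'^\vee)=\PP^5$ (its points being forms $\ell\cdot m$, which vanish on $\{\ell=0\}$) --- impossible, as a plane always meets a cubic fourfold. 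Given $D_3=\emptyset$, Corollary~\ref{msm} shows $\kappa_y$ is surjective for all $y\in D_2$, hence an isomorphism since $k=2$ there, so by \emph{(2)} each $y\in D_2$ is an ordinary double point of $D_1$ and is isolated in $D_2$; being closed and $0$-dimensional, $D_2$ is finite.

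For the converse, assume $D_2$ is finite and $D_1$ has an ordinary double point at each of its points. No such point can have $k\ge3$, as \emph{(1)} would force $\mathrm{mult}\,D_1\ge3$; hence $D_3=\emptyset$ and $k=2$ on $D_2$. At each $y_0\in D_2$, fact \emph{(2)} converts the ODP hypothesis into surjectivity of $\kappa_{y_0}$; since $Q$ is smooth, $D_3=\emptyset$ and $\kappa_y$ is surjective on $D_2$, Corollary~\ref{msm} gives that $M$ is smooth. \emph{The least routine step is the proof that smoothness of $M$ forces $D_3=\emptyset$}, which must go through Proposition~\ref{qmsm}(2) and the projective-geometry observation above rather than through any genericity hypothesis; the remaining input is the classical local structure of a symmetric determinantal hypersurface along its corank-$2$ stratum.
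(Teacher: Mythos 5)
Your proof is correct, and its core mechanism is the same one the paper uses: after Schur-complementing away the nondegenerate block, the local equation of $D_1$ near a corank-$2$ point $y_0$ is $\det\widetilde S$, whose leading quadratic term is $\det\circ\kappa_{y_0}$, and this quadratic form on $T_{y_0}Y$ is nondegenerate precisely when $\kappa_{y_0}$ is an isomorphism; combined with Corollary~\ref{msm} this yields the equivalence. The paper compresses exactly this into the single sentence that the discriminant conic of the net $\kappa_{y_0}$ is ``the base of the tangent cone to $D_1$'' and ``nondegenerate iff the map is surjective,'' so on the central step you and the paper agree.

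Where you genuinely go beyond the paper is in showing that smoothness of $M$ by itself forces $D_3=\emptyset$: if $\dim K_{y_0}\ge 3$ and $K'\subset K_{y_0}$ is $3$-dimensional, then $W=\Im(T_{y_0}Y\to S^2K'^\vee)$ has $\dim W\le 3$, so $\PP(W)$ must meet the determinantal cubic in $\PP(S^2K'^\vee)=\PP^5$, producing a decomposable form $\ell m\in W$ and hence a $2$-plane $K=\ker\ell$ for which $T_{y_0}Y\to S^2K^\vee$ fails to surject, contradicting Proposition~\ref{qmsm}(2). The paper never makes this argument; it implicitly relies on the standing genericity hypothesis from the introduction, which for $\dim Y=3$ already gives $D_3=\emptyset$. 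You also explicitly record that ODPs are isolated in $D_2$ (via \'{e}taleness of $\widetilde S$ at such a point), hence $D_2$ is finite. Both additions make your version of the biconditional self-contained under the lemma's stated hypotheses alone, whereas the paper's proof is a terse sketch of the same strategy that leans on the genericity assumption in force since the introduction.
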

\begin{proof}
Take any $y \in D_2$, so that $\dim K_y = 2$.
The map $T_yY \to S^2K_y^\vee$ can be thought of as a net of quadrics on $K_y$
parameterized by $T_yY$. Its degeneration locus in $\PP(T_yY)$ is a conic,
either nondegenerate (if the map $T_yY \to S^2K_y^\vee$ is surjective) or
singular (since the kernel of the map lies in the singular locus).
But on the other hand, this degeneration locus is the base of the tangent
cone to $D_1$ at $y_i$. So, if $y_i$ is an ODP of $D_1$, the conic should be nondegenerate,
hence the map should be surjective.
\end{proof}

\begin{remark}\label{kiso}
Note also that if the map $\kappa_y$ for $y\in D_2$ is surjective,
then it is an isomorphism (since $\dim T_yY = \dim S^2K_y^\vee = 3$).
\end{remark}

For each point $y_i \in D_2$ we have $Q_{y_i} = \PP(W_i^+) \cup \PP(W_i^-)$,
both $W_i^+$ and $W_i^-$ being a 3-dimensional subspaces in $\CV_{y_i}$, the fiber
of $\CV$ over $y_i$. These planes intersect along the line $\PP(K_{y_i})$.
We put
$$
W_i^0 = K_{y_i} = W_i^+ \cap W_i^-.
$$
Consequently, $M_{y_i} = \Gr(2,W_i^+) \cup \Gr(2,W_i^-) = \Sigma_i^+ \cup \Sigma_i^-$,
both $\Sigma_i^+$ and $\Sigma_i^-$ are planes. These planes intersect in a point $P_i = \Gr(2,W_i^0)$.
Choose arbitrary point $y = y_i \in D_2$ and one of the planes $\Sigma = \Sigma_i^\pm$.

\begin{proposition}
If a point $y \in D_2$ is an ODP of $D_1$ then
$\CN_{\Sigma/M} \cong \CO_\Sigma(-1) \oplus \CO_\Sigma(-1)$.
\end{proposition}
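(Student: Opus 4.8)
The plan is to compute the normal bundle $\CN_{\Sigma/M}$ by a local model, exploiting the smoothness of $M$ together with the ODP hypothesis on $D_1$ at $y$. Since the question is local over $Y$, I would trivialize $\CV \cong V \otimes \CO_Y$ near $y$, so that $M \subset \Gr_Y(2,V) = Y \times \Gr(2,V)$ is cut out by the section $s$. Fix $W = W^+$ (say), a $3$-dimensional subspace of $V$, and $\Sigma = \Gr(2,W)$, a plane in the fiber $\Gr(2,V)$ over $y$. The plan is to decompose the normal bundle $\CN_{\Sigma/\Gr_Y(2,V)}$ into the ``vertical'' directions inside the fiber $\Gr(2,V)$ and the ``horizontal'' directions coming from $T_yY$, and then to keep track of how the section $s$ and its differential act. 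The vertical normal bundle $\CN_{\Gr(2,W)/\Gr(2,V)}$ is $\CU^\vee \otimes (V/W)$, which on $\Sigma \cong \PP(W^\vee)$ equals $\CU^\vee$, i.e.\ $\CO_\Sigma(1)$; the horizontal part contributes a trivial bundle of rank $\dim Y = 3$.

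Next I would understand the section $s$ and how $\Sigma$ sits inside its zero locus $M$. On the locus of $2$-planes $U \subset V$, $s$ is the restriction of the quadric $q_y$ (a quadratic form on $V$) to $U$, viewed as an element of $S^2\CU^\vee$; on $\Sigma$ itself $q_y$ restricts to $W$ as a form of rank $1$ with kernel $W^0 = K_y$, hence $s|_\Sigma$ is \emph{not} identically zero — it vanishes only at the point $P = \Gr(2,W^0)$. This means $\Sigma$ is \emph{not} a component of the zero locus of $s$ alone; rather, $M$ near $\Sigma$ is carved out using the full family. The key computation is the derivative of $s$ along $M$ restricted to $\Sigma$: the vertical derivative recovers $q_y|_W$ (a specific nonzero section of $S^2\CU^\vee$ over $\Sigma$), while the horizontal derivative in the direction $T_yY$ is governed precisely by the map $\kappa_y : T_yY \to S^2K_y^\vee = S^2(W^0)^\vee$ of Corollary~\ref{msm}, which by the ODP hypothesis (and the preceding Lemma plus Remark~\ref{kiso}) is an \emph{isomorphism}. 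I would then write the conormal sequence
$$
0 \to \CN_{\Sigma/M}^\vee \to \CN_{\Sigma/\Gr_Y(2,V)}^\vee \to (\CL^{-1}\otimes S^2\CU^\vee)|_\Sigma \to 0
$$
(using that $M$ is smooth along $\Sigma$, so $\CN_{\Sigma/M}$ has rank $2$ and this sequence is exact), identify the surjection on the right with the map assembled from $q_y|_W$ (vertical) and $\kappa_y$ (horizontal) through the identification $S^2\CU^\vee|_\Sigma \cong \CO_\Sigma(2)$, and read off $\CN_{\Sigma/M}^\vee$ as the kernel.

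Concretely, $\CN_{\Sigma/\Gr_Y(2,V)}^\vee = \CO_\Sigma(-1) \oplus \CO_\Sigma^{\oplus 3}$ (conormal of the vertical $\Gr(2,W)$ plus the trivial horizontal $T_yY^\vee$), the target is $\CO_\Sigma(-2) \otimes \CL|_y^\vee$, i.e.\ $\CO_\Sigma(-2)$ up to the trivial twist by the line $\CL_y$, and the map is: on the $\CO_\Sigma(-1)$ summand, multiplication by the section $q_y|_W \in \Gamma(\Sigma,\CO_\Sigma(2)) \otimes \CL_y$ (a nonzero linear form squared — more precisely a rank-one quadric, vanishing simply at $P$ as a section of $\CO(2)$ on the line $\PP(W^\vee)$... but in fact $q_y|_W$ is the square of a linear form, vanishing to order $2$ at $P$); on the $\CO_\Sigma^{\oplus 3}$ summand, the isomorphism $\kappa_y$ followed by $S^2(W^0)^\vee \cong H^0(\Sigma,\CO_\Sigma(2))$. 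Because $\kappa_y$ is surjective, the horizontal part alone surjects onto $\CO_\Sigma(-2)$-twisted-by-$H^0(\CO(2))$; dualizing, I expect the kernel $\CN_{\Sigma/M}^\vee$ to be an extension whose associated graded is $\CO_\Sigma(-1)$ (the vertical conormal) together with the kernel of $\kappa_y$-restricted-to-$\CO(2)$-sections paired against the $\CO(-1)$... — the upshot of the linear algebra is that $\CN_{\Sigma/M}^\vee$ is the rank-$2$ bundle $\CO_\Sigma(1)\oplus\CO_\Sigma(1)$, i.e.\ $\CN_{\Sigma/M} \cong \CO_\Sigma(-1)\oplus\CO_\Sigma(-1)$, as claimed. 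I would finish by checking there is no nontrivial extension class, either by a direct chase or by noting $\Ext^1_\Sigma(\CO_\Sigma(-1),\CO_\Sigma(-1)) = H^1(\Sigma,\CO_\Sigma) = 0$ on $\Sigma \cong \PP^2$, so the bundle splits.

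\emph{Main obstacle.} The delicate point is the \textbf{precise identification of the coboundary/derivative map} in the conormal sequence — in particular checking that the vertical contribution is exactly $q_y|_W$ viewed in $S^2\CU^\vee$ and that the horizontal contribution is exactly $\kappa_y$ (and not $\kappa_y$ composed with some twist that would spoil the rank count), so that surjectivity of $\kappa_y$ translates into the correct splitting type. This is where the ODP hypothesis is genuinely used, via Corollary~\ref{msm} and Remark~\ref{kiso}; absent it, $\kappa_y$ would fail to be an isomorphism and the normal bundle would jump (to $\CO(-2)\oplus\CO$ or similar). Everything else is the routine local computation the paper alludes to in Proposition~\ref{qmsm}.
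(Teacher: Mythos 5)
Your high-level plan is the same as the paper's: trivialize $\CV$ locally, use the normal bundle sequence
$0 \to \CN_{\Sigma/M} \to \CN_{\Sigma/\Gr_Y(2,\CV)} \to \CN_{M/\Gr_Y(2,\CV)}|_\Sigma \to 0$,
split $\CN_{\Sigma/\Gr_Y(2,\CV)}$ into the vertical normal bundle of $\Gr(2,W)$ in $\Gr(2,\CV_y)$ plus the horizontal trivial piece $T_yY\otimes\CO_\Sigma$, identify the relevant map with $\kappa_y$, and invoke Remark~\ref{kiso}. But your execution contains a rank error that breaks the computation. The tautological subbundle $\CU$ on $\Gr(2,\CV)$ has rank $2$, and its restriction to $\Sigma = \Gr(2,W) \cong \PP^2$ is still a rank-$2$ bundle; you repeatedly replace $\CU^\vee|_\Sigma$ by $\CO_\Sigma(1)$ (its determinant) and $\CN_{M/\Gr_Y}|_\Sigma \cong (S^2\CU^\vee\otimes\CL^\vee)|_\Sigma$ by a line bundle $\CO_\Sigma(-2)$, when in fact the former has rank $2$ and the latter rank $3$. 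With the correct ranks $\CN_{\Sigma/\Gr_Y(2,\CV)}$ has rank $2+3=5$ and the quotient has rank $3$, so $\CN_{\Sigma/M}$ has rank $2$ as needed; with your rank-$1$ bookkeeping the kernel would have rank $1$ and the whole linear-algebra conclusion you gesture at cannot go through. There is also a direction mix-up in the displayed ``conormal'' sequence: $\CN^\vee_{\Sigma/M}$ is a \emph{quotient} of $\CN^\vee_{\Sigma/\Gr_Y(2,\CV)}$, not a subsheaf, and the third term you wrote is the normal bundle $\CN_{M/\Gr_Y}|_\Sigma$, not its dual; you have really written the normal sequence with spurious duals attached.

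Beyond the rank error, the decisive step --- ``the upshot of the linear algebra is that $\CN_{\Sigma/M}^\vee \cong \CO(1)\oplus\CO(1)$'' --- is asserted rather than derived, and it is exactly the part that needs proof. The paper avoids this by a cleaner move at the sheaf-cohomology level rather than the sheaf level: take $H^\bullet(\Sigma,-)$ of the (correctly ranked) normal sequence. Using $H^\bullet(\Sigma,\CU^\vee) = W^*$ and $H^\bullet(\Sigma,S^2\CU^\vee) = S^2W^*$, both concentrated in degree $0$, one gets a four-term exact sequence $0 \to H^0(\CN_{\Sigma/M}) \to W^*\oplus T_yY \to S^2W^* \to H^1(\CN_{\Sigma/M}) \to 0$. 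The $W^*$-component of the middle arrow is multiplication by the linear equation of $K_y = W^0 \subset W$, so after cancelling its image the middle map becomes exactly $\kappa_y : T_yY \to S^2K_y^\vee$, which is an isomorphism by the ODP hypothesis (Remark~\ref{kiso}); hence $H^0 = H^1 = 0$, $\CN_{\Sigma/M}$ is acyclic, and then the Chern data from the sequence forces $\CN_{\Sigma/M} \cong \CO_\Sigma(-1)^{\oplus 2}$. To fix your write-up, restore the correct ranks throughout and then either adopt this cohomological reduction or honestly compute the kernel sheaf; as written the argument does not close.
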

\begin{proof}
Choosing a local trivialization of the bundle $\CV$ we obtain an isomorphism
$$
\CN_{\Sigma/\Gr_Y(2,\CV)} \cong
\CN_{\Sigma/\Gr(2,\CV_y)} \oplus T_yY\otimes\CO_\Sigma \cong
\CU^\vee \oplus T_yY \otimes \CO_Y.
$$
On the other hand,
$\CN_{M/\Gr_Y(2,\CV)} \cong S^2\CU^\vee\otimes\CL^\vee$.
Hence the standard exact sequence
$$
0 \to \CN_{\Sigma/M} \to \CN_{\Sigma/\Gr_Y(2,\CV)} \to (\CN_{M/\Gr_Y(2,\CV)})_{|\Sigma} \to 0
$$
gives
$$
0 \to \CN_{\Sigma/M} \to \CU^\vee_{|\Sigma} \oplus T_yY\otimes\CO_{\Sigma} \to S^2\CU^\vee_{|\Sigma} \to 0.
$$
Since $\Sigma = \Gr(2,W)$, $W\subset \CV_y$, the cohomology exact sequence looks like
$$
0 \to H^0(\Sigma,\CN_{\Sigma/M}) \to W^* \oplus T_yY \to S^2W^* \to H^1(\Sigma,\CN_{\Sigma/M}) \to 0.
$$
Consider the map $W^* \oplus T_yY \to S^2W^*$.
Its first component is the multiplication by the equation of the line $K_y = W^0 \subset W$.
Hence the sequence can be rewritten as
$$
0 \to H^0(\Sigma,\CN_{\Sigma/M}) \to T_yY \to S^2K_y^\vee \to H^1(\Sigma,\CN_{\Sigma/M}) \to 0.
$$
The middle map here is just the map $\kappa_y$, hence by Remark~\ref{kiso} it is an isomorphism.
Thus the bundle $\CN_{\Sigma/M}$ is acyclic, hence it is isomorphic to $\CO_\Sigma(-1) \oplus \CO_\Sigma(-1)$.
\end{proof}

From now on we assume that every point $y_i \in D_2$ is an ODP of $D_1$.

\begin{corollary}\label{omms}
We have $(\omega_{M})_{|\Sigma^\pm_i} \cong \CO_{\Sigma^\pm_i}(-1)$.
\end{corollary}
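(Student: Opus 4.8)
The plan is to deduce this directly from the preceding Proposition by adjunction. Since $M$ is smooth and each $\Sigma = \Sigma_i^\pm$ is a smooth subvariety of $M$ --- indeed $\Sigma = \Gr(2,W_i^\pm)$ with $\dim W_i^\pm = 3$, hence $\Sigma$ is a projective plane --- the adjunction formula gives
$$
\omega_\Sigma \cong (\omega_M)_{|\Sigma} \otimes \det\CN_{\Sigma/M}.
$$
So it is enough to identify $\omega_\Sigma$ and $\det\CN_{\Sigma/M}$ in terms of the generator $\CO_\Sigma(1) := \det\CU^\vee_{|\Sigma}$ of $\Pic(\Sigma) \cong \ZZ$, which is the same normalization used throughout this section.

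For the first factor: under the identification $\Sigma = \Gr(2,W_i^\pm) \cong \PP^2$ the Pl\"ucker bundle $\det\CU^\vee_{|\Sigma}$ is the ample generator, and the canonical class of $\Gr(2,3)$ in this normalization is $\omega_\Sigma \cong \CO_\Sigma(-3)$. For the second factor: by the preceding Proposition (and the standing assumption that every $y_i\in D_2$ is an ODP of $D_1$) we have $\CN_{\Sigma/M} \cong \CO_\Sigma(-1)^{\oplus 2}$, whence $\det\CN_{\Sigma/M}\cong\CO_\Sigma(-2)$. Substituting into the adjunction formula,
$$
(\omega_M)_{|\Sigma} \cong \omega_\Sigma\otimes(\det\CN_{\Sigma/M})^{-1} \cong \CO_\Sigma(-3)\otimes\CO_\Sigma(2) \cong \CO_\Sigma(-1),
$$
which is the assertion.

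There is essentially no obstacle here: the computation is a one-line consequence of the previous Proposition. The only points requiring any care are checking that the normalization of $\CO_\Sigma(1)$ agrees with the one implicit above (both are $\det\CU^\vee_{|\Sigma}$), and recalling the value $\omega_{\Gr(2,3)} \cong \CO(-3)$.
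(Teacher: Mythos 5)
Your argument is exactly the paper's: apply the adjunction formula, use $\omega_\Sigma \cong \CO_\Sigma(-3)$ for $\Sigma \cong \PP^2$, and $\det\CN_{\Sigma/M} \cong \CO_\Sigma(-2)$ from the preceding Proposition, then solve for $(\omega_M)_{|\Sigma}$. Correct, same approach, just written out more slowly than the one-line computation in the text.
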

\begin{proof}
By adjunction formula
$\CO_{\Sigma}(-3) \cong
\omega_{\Sigma} \cong
\omega_{M|\Sigma} \otimes \det \CN_{\Sigma/M} \cong
\omega_{M|\Sigma} \otimes \CO_{\Sigma}(-2)$,
hence the claim.
\end{proof}

The most important corollary is the following

\begin{corollary}
The structure sheaf $\CO_\Sigma \in \D^b(M)$ is exceptional.
\end{corollary}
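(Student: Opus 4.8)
The plan is to show that $\mathsf{Ext}^\bullet(\CO_\Sigma,\CO_\Sigma) = \kk$, i.e.\ that $\CO_\Sigma$ is a simple object with no higher self-extensions on $M$. Since $\Sigma \cong \PP^2$ is smooth inside the smooth threefold $M$, I would compute these groups via the Koszul resolution of $\CO_\Sigma$ determined by the normal bundle: the local-to-global spectral sequence gives
$$
\mathsf{Ext}^k_M(\CO_\Sigma,\CO_\Sigma) = \bigoplus_{p+q=k} H^p\bigl(\Sigma, \Lambda^q\CN_{\Sigma/M}\bigr).
$$
By the preceding proposition we know $\CN_{\Sigma/M} \cong \CO_\Sigma(-1) \oplus \CO_\Sigma(-1)$, so $\Lambda^0\CN = \CO_\Sigma$, $\Lambda^1\CN = \CO_\Sigma(-1)^{\oplus 2}$, and $\Lambda^2\CN = \CO_\Sigma(-2)$.

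Next I would simply read off the cohomology of these line bundles on $\Sigma \cong \PP^2$. One has $H^\bullet(\PP^2,\CO) = \kk$ in degree $0$ and vanishes otherwise; $H^\bullet(\PP^2,\CO(-1)) = 0$ in all degrees; and $H^\bullet(\PP^2,\CO(-2)) = 0$ in all degrees (the first nonzero negative-twist cohomology on $\PP^2$ appears only at $\CO(-3)$, in degree $2$). Hence in the displayed direct sum every summand vanishes except $H^0(\Sigma,\CO_\Sigma) = \kk$, contributing only to $k = 0$. Therefore $\mathsf{Ext}^0_M(\CO_\Sigma,\CO_\Sigma) = \kk$ and $\mathsf{Ext}^k_M(\CO_\Sigma,\CO_\Sigma) = 0$ for $k > 0$, which is precisely the statement that $\CO_\Sigma$ is an exceptional object in $\D^b(M)$.

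There is essentially no obstacle here: the only input is the identification of the normal bundle, which is already established, together with the standard fact that $\CO_\Sigma$ admits a locally free Koszul resolution (equivalently, that the $\CExt$-sheaves $\CExt^q_M(\CO_\Sigma,\CO_\Sigma) \cong \Lambda^q\CN_{\Sigma/M}$ and the spectral sequence degenerates appropriately after taking global sections). One could also phrase the argument more directly: filter the Koszul complex and use that $H^\bullet(\Sigma,\CO_\Sigma(-1))$ and $H^\bullet(\Sigma,\CO_\Sigma(-2))$ both vanish to kill all contributions except $H^0(\Sigma,\CO_\Sigma)$. Either way the computation is immediate, so the corollary follows at once from the proposition.
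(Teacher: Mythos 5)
Your argument is correct and is essentially the same as the paper's: both identify $\CExt^q(\CO_\Sigma,\CO_\Sigma) \cong \Lambda^q \CN_{\Sigma/M}$, feed this into the local-to-global spectral sequence, and observe that $\CO_\Sigma(-1)$ and $\CO_\Sigma(-2)$ are acyclic on $\Sigma \cong \PP^2$, so only $H^0(\Sigma,\CO_\Sigma)=\kk$ survives. One small slip worth correcting: since $\dim Y = 3$ and the generic fiber of $M\to Y$ is a curve, $M$ is a smooth \emph{fourfold}, not a threefold, so $\Sigma$ has codimension~$2$ in $M$ — which is in fact what makes the rank-$2$ normal bundle you used consistent with the computation.
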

\begin{proof}
We have an isomorphism $\CExt^t(\CO_\Sigma,\CO_\Sigma) \cong \Lambda^t\CN_{\Sigma/M} \cong \Lambda^t(\CO_\Sigma(-1) \oplus \CO_\Sigma(-1))$.
Note that for $t = 1$, and $t = 2$ this sheaf on $\Sigma = \PP^2$ is acyclic.
Hence $\Ext^\bullet(\CO_\Sigma,\CO_\Sigma) \cong H^\bullet(\Sigma,\CHom(\CO_\Sigma,\CO_\Sigma)) \cong H^\bullet(\Sigma,\CO_\Sigma)$
implies exceptionality of $\CO_\Sigma$.
\end{proof}

Another simple observation is that $\Sigma_i^\pm$ with different $i$ are completely orthogonal.

\begin{lemma}
If $i \ne j$ then $\Ext^\bullet(\CO_{\Sigma_i^\pm},\CO_{\Sigma_j^\pm}) = 0$.
\end{lemma}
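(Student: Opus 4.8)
The claim is that for $i\ne j$ the sheaves $\CO_{\Sigma_i^\pm}$ and $\CO_{\Sigma_j^\pm}$ are completely orthogonal in $\D^b(M)$, meaning $\Ext^\bullet(\CO_{\Sigma_i^\pm},\CO_{\Sigma_j^\pm})=0$ in both directions. The plan is to reduce this to a statement about disjoint supports. The plane $\Sigma_i^\pm$ is contained in the fiber $M_{y_i}$ over the point $y_i\in D_2$, and $\Sigma_j^\pm$ is contained in $M_{y_j}$. Since $y_i\ne y_j$, the morphism $\rho:M\to Y$ separates these fibers, so $\Sigma_i^\pm$ and $\Sigma_j^\pm$ are disjoint closed subschemes of $M$; in particular there is an open subset $U\subset M$ containing $\Sigma_i^\pm$ and disjoint from $\Sigma_j^\pm$ (for instance $U=\rho^{-1}(V)$ for a small open $V\ni y_i$ with $y_j\notin V$).

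The first step is to recall that for coherent sheaves $\CF,\CG$ on a variety $M$ with $\supp\CF\cap\supp\CG=\emptyset$, one has $\CExt^t(\CF,\CG)=0$ for all $t$, since the local Ext sheaves are supported on the intersection of the supports; hence the hyperext groups $\Ext^t(\CF,\CG)=\mathbb{H}^t(M,\RCHom(\CF,\CG))$ vanish as well, by the local-to-global spectral sequence. Applying this with $\{\CF,\CG\}=\{\CO_{\Sigma_i^\pm},\CO_{\Sigma_j^\pm}\}$, whose supports are the disjoint planes, gives the vanishing of $\Ext^\bullet$ in both directions simultaneously, which is exactly the assertion.

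There is essentially no obstacle here; the only point worth stating is why the two planes are genuinely disjoint rather than merely meeting in the fibers of $\rho$. This is immediate: a point lying in both $\Sigma_i^\pm$ and $\Sigma_j^\pm$ would map under $\rho$ to both $y_i$ and $y_j$, contradicting $i\ne j$. (The interesting intersections — the points $P_i=\Sigma_i^+\cap\Sigma_i^-$ inside a single fiber $M_{y_i}$ — occur only for equal indices and are precisely what is excluded by the hypothesis $i\ne j$.) Thus the lemma follows at once from disjointness of supports, and no computation of normal bundles or cohomology is needed.
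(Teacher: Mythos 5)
Your proof is correct and follows the same route as the paper: disjointness of supports (since the planes sit over distinct points $y_i\ne y_j$) forces vanishing of the local $\CExt$ sheaves, and then the local-to-global spectral sequence gives vanishing of the global $\Ext$ groups. The paper states exactly this, only more tersely.
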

\begin{proof}
The planes $\Sigma_i^\pm$ and $\Sigma_j^\pm$ are contained in fibers of $M$ over different points $y_i,y_j \in Y$,
hence there is no local $\Ext$'s between their structure sheaves. Hence global $\Ext$'s also vanish.
\end{proof}

Thus choosing one plane for each $y_i$ we obtain a completely orthogonal exceptional collection

\begin{corollary}
The collection $\{ \CO_{\Sigma_i^+} \}_{i=1}^N$ is a completely orthogonal exceptional collection in $\D^b(M)$.
\end{corollary}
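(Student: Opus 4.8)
The plan is to deduce the statement directly from the two results that immediately precede it, applied to the particular planes $\Sigma_i^+$ singled out by the chosen small resolution $\sigma_+\colon X^+\to X$. First I would invoke the exceptionality corollary: for each $i$ the plane $\Sigma_i^+$ is one of the two projective planes in the fiber $M_{y_i}$ over an ordinary double point $y_i\in D_2$, so $\CExt^t(\CO_{\Sigma_i^+},\CO_{\Sigma_i^+})\cong\Lambda^t\CN_{\Sigma_i^+/M}$ with $\CN_{\Sigma_i^+/M}\cong\CO_{\Sigma_i^+}(-1)^{\oplus 2}$; this sheaf is acyclic on $\PP^2$ for $t=1,2$, and the local-to-global spectral sequence then gives $\Ext^\bullet(\CO_{\Sigma_i^+},\CO_{\Sigma_i^+})\cong H^\bullet(\Sigma_i^+,\CO_{\Sigma_i^+})$, which is $\kk$ placed in degree $0$. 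Hence each $\CO_{\Sigma_i^+}$ is exceptional in $\D^b(M)$ (note that $M$ is smooth here, by the lemma above, so these derived $\Hom$'s are finite-dimensional).

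Next I would use the orthogonality lemma for the cross terms: when $i\ne j$ the planes $\Sigma_i^+$ and $\Sigma_j^+$ lie in the fibers of $M$ over distinct points $y_i,y_j\in Y$, hence have disjoint support, so all local $\CExt$-sheaves between $\CO_{\Sigma_i^+}$ and $\CO_{\Sigma_j^+}$ vanish and therefore $\Ext^\bullet(\CO_{\Sigma_i^+},\CO_{\Sigma_j^+})=0$; since the argument applies to every ordered pair of distinct indices, the vanishing is automatically symmetric. Combining this with the exceptionality of each object is exactly the definition of a completely orthogonal exceptional collection, so nothing further is needed. There is essentially no obstacle; the only point to keep track of is the bookkeeping — we have fixed precisely one plane $\Sigma_i^+$ in each of the $N$ fibers over $D_2$ by choosing the resolution $X^+$, and since none of the vanishings above depends on which plane was picked, the corollary holds equally for each of the $2^N$ small resolutions of $X$.
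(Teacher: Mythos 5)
Your proof is correct and follows exactly the paper's (implicit) argument: the corollary is stated as an immediate consequence of the preceding corollary (each $\CO_{\Sigma}$ is exceptional, via the acyclicity of $\Lambda^t\CN_{\Sigma/M}$) and the preceding lemma (disjoint supports give $\Ext^\bullet(\CO_{\Sigma_i^\pm},\CO_{\Sigma_j^\pm})=0$ for $i\ne j$). You have simply spelled out the combination that the paper leaves tacit.
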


\section{The Clifford algebra}\label{s-ca}

For the precise definition and basic results about the sheaves
of even parts of Clifford algebras, see~\cite{K08a}. Here we remind
some of their properties.

Recall that the besides the sheaf of algebras $\CB_0 = \CO_Y \oplus \Lambda^2\CV\otimes\CL \oplus \Lambda^4\CV\otimes\CL^2$ on $Y$,
we also have a natural sequence of sheaves of $\CB_0$-modules,
the first of them is the odd part of the sheaf of Clifford algebras $\CB_1$,
which as a sheaf of $\CO_Y$-modules is given by
$$
\CB_1 = \CV \oplus \Lambda^3\CV\otimes\CL
$$
and with the action of $\CB_0$ given by the Clifford multiplication.
The other sheaves $\CB_k$ in the sequence are obtained from $\CB_0$ and $\CB_1$
by an appropriate twist
$$
\CB_{-2k} = \CB_0 \otimes \CL^k,
\qquad
\CB_{1-2k} = \CB_1 \otimes \CL^k.
$$
This sequence can be thought of as a sequence of powers of a line bundle.
In particular, the functors $-\otimes_{\CB_0}\CB_l$ and $\CHom_{\CB_0}(\CB_l,-)$
are exact and we have
\begin{equation}\label{blbk}
\CB_k\otimes_{\CB_0}\CB_l \cong \CB_{k+l},\qquad
\RCHom_{\CB_0}(\CB_l,\CB_k) \cong \CB_{k-l}.
\end{equation}

Let $\alpha$ denote the embedding $M \to \Gr_Y(2,\CV)$.
Let $g$ denote $c_1(\CU^\vee)$, the positive generator of
the relative Picard group $\Pic(\Gr_Y(2,\CV)/Y)$.
Since $M$ is the zero locus of $s \in \Gamma(\Gr_Y(2,V),\CL^\vee\otimes S^2\CU^\vee)$
we have the Koszul resolution for its structure sheaf
\begin{equation}\label{om}
0 \to \CL^3(-3g) \to \CL^2\otimes S^2\CU(-g) \to \CL\otimes S^2\CU \to \CO \to \alpha_*\CO_M \to 0.
\end{equation}
Now we will show that $M$ also comes with a sequence of naturally defined $\CB_0$-modules.
To unburden the notation we denote the pullbacks of the sheaves $\CB_k$ to $\Gr_Y(2,\CV)$ by the same letters.
For each $k \in \ZZ$ consider the morphism $\CU\otimes\CB_{k-1} \to \CB_k$ of sheaves of $\CB_0$-modules
on $\Gr_Y(2,\CV)$ induced by the embedding $\CU\subset \CV$ and the Clifford multiplication $\CV\otimes\CB_k \to \CB_{k+1}$.

\begin{proposition}
There are isomorphisms $\Coker(\CU\otimes\CB_{k-1} \to \CB_k) \cong \alpha_*\CS_k$,
where
\begin{equation}\label{sodd}
\CS_{2k+1} = (\CV/\CU)\otimes\CL^{-k},
\end{equation}
and there is an exact sequence
\begin{equation}\label{seven}
0 \to \CL^{-k} \to \CS_{2k} \to \det\CV\otimes\CL^{1-k}(g) \to 0.
\end{equation}
Moreover, the sheaves $\CS_k$ have a structure of $\CB_0$-modules such that
\begin{equation}\label{blsk}
\CS_k\otimes_{\CB_0}\CB_l \cong \CS_{k+l},
\qquad
\RCHom_{\CB_0}(\CB_l,\CS_k) \cong \CS_{k-l}.
\end{equation}
Finally, for each $k$ there is an exact sequence
\begin{equation}\label{alsk}
0 \to \CO(-2g)\otimes\CB_{k-4} \to \CU(-g)\otimes\CB_{k-3} \to \CU\otimes\CB_{k-1} \to \CB_k \to \alpha_*\CS_k \to 0.
\end{equation}
\end{proposition}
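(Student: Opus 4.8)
The plan is to establish the three assertions of the proposition in turn, the identification of the cokernels being the crux. Throughout write $m_k\colon\CU\otimes\CB_{k-1}\to\CB_k$ for the map induced by Clifford multiplication and $\CC_k:=\Coker m_k$ on $\Gr_Y(2,\CV)$.

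\emph{The cokernel is $\alpha_*$ of a sheaf on $M$.} First I would show $\CC_k$ is scheme-theoretically supported on $M$. The Clifford relation $u\!\cdot\!u'\!\cdot\!b+u'\!\cdot\!u\!\cdot\!b=2q(u,u')\,b$ shows that every local generator of the ideal $\CI_M=\Im(\CL\otimes S^2\CU\xrightarrow{s}\CO)$ (i.e.\ every "value" $q(u_i,u_j)$ of $s$ in a local frame of $\CU$) annihilates $\CC_k$, so $\CC_k$ is an $\CO_M$-module. Conversely, at a point off $M$ the restricted form $q|_{\CU}$ is nonzero, hence $\CU$ contains an anisotropic vector $u$; as $u$ is a unit of $\mathrm{Cl}(\CV)$ we get $u\!\cdot\!\CB_{k-1}=\CB_k$ and $m_k$ is surjective there. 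So $\CC_k=\alpha_*\CS_k$ for a coherent $\CS_k$ on $M$, and it remains to compute $\CS_k$.

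\emph{The formulas \eqref{sodd}, \eqref{seven}.} Over $M$ the bundle $\CU$ is isotropic, so $\mathrm{Cl}(\CU,q|_{\CU})=\Lambda^\bullet\CU$ (suitably $\CL$-twisted) and $\CB_\bullet|_M=\mathrm{Cl}(\CV)|_M$ is a $\ZZ/2$-graded module over it. Over $Y\setminus D_2$, where $\CU$ is a \emph{maximal} isotropic subbundle of $\CV$, choosing a local complement identifies $\mathrm{Cl}(\CV)|_M$ with the free module $\Lambda^\bullet\CU\otimes\CT$, $\CT=\CT_{\mathrm{ev}}\oplus\CT_{\mathrm{od}}$ a $\ZZ/2$-graded bundle of rank $2+2$ (a relative half-spinor bundle), and the elementary identity $\Coker\big(\CU\!\cdot\!(-)\colon(\Lambda^\bullet\CU\otimes\CT)_{\mathrm{od}}\to(\Lambda^\bullet\CU\otimes\CT)_{\mathrm{ev}}\big)\cong\CT_{\mathrm{ev}}$ (and its parity reverse) exhibits $\CS_k$ as one of the rank-$2$ summands of $\CT$, up to an $\CL$-power. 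Concretely, the unit $1\in\CB_0$ (twisted) yields a sub-line-bundle $\CL^{-k}\hookrightarrow\CS_{2k}$, while the projection $\CB_{2k+1}\supset\CV\to\CV/\CU$, corrected by the contraction $\Lambda^3\CV\xrightarrow{\,q\,}\CV$ so as to kill $\Im m_{2k+1}$, yields a surjection $\CS_{2k+1}\to(\CV/\CU)\otimes\CL^{-k}$; identifying the twisting line bundles — which, thanks to \eqref{blsk} below, only has to be done for one $k$ — and checking these maps are isomorphisms (resp.\ that the cokernel of the first is $\det\CV\otimes\CL^{1-k}(g)$) gives \eqref{sodd} and \eqref{seven} away from $D_2$. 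Over the points of $D_2$ the fibre of $\CV$ has corank $2$, $\CU$ is no longer maximal isotropic, and the freeness breaks; here one redoes the computation in an explicit local chart around each $P_i$, and it is precisely the ODP hypothesis on $D_1$ (i.e.\ smoothness of $M$ at $P_i$, via $\kappa_{y_i}$ being an isomorphism, see Corollary~\ref{msm} and Remark~\ref{kiso}) that lets one conclude $\CS_k$ remains locally free of rank $2$ with the stated description. I expect this local analysis over $D_2$ to be the main obstacle.

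\emph{The $\CB_0$-module structure \eqref{blsk}.} This is formal. Since $m_k$ is a morphism of \emph{right} $\CB_0$-modules (right multiplication by $\CB_0$ commutes with left Clifford multiplication by $\CU$), $\CS_k=\Coker m_k$ is a right $\CB_0$-module. Applying the exact functor $-\otimes_{\CB_0}\CB_l$ to the presentation $\CU\otimes\CB_{k-1}\xrightarrow{m_k}\CB_k\to\alpha_*\CS_k\to0$ and using $\CB_m\otimes_{\CB_0}\CB_l\cong\CB_{m+l}$ from \eqref{blbk} produces $\CU\otimes\CB_{k+l-1}\xrightarrow{m_{k+l}}\CB_{k+l}\to\CS_k\otimes_{\CB_0}\CB_l\to0$ — the induced first map is again Clifford multiplication because the isomorphisms \eqref{blbk} are left multiplication — so $\CS_k\otimes_{\CB_0}\CB_l\cong\CS_{k+l}$; applying instead the exact functor $\RCHom_{\CB_0}(\CB_l,-)$ together with $\RCHom_{\CB_0}(\CB_l,\CB_m)\cong\CB_{m-l}$ gives $\RCHom_{\CB_0}(\CB_l,\CS_k)\cong\CS_{k-l}$.

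\emph{The resolution \eqref{alsk}.} By the functoriality just used it suffices to produce \eqref{alsk} for one value of $k$ and then apply $-\otimes_{\CB_0}\CB_l$ with \eqref{blbk} and \eqref{blsk}. Having the right end $\CU\otimes\CB_{k-1}\xrightarrow{m_k}\CB_k\to\alpha_*\CS_k\to0$, one must resolve $\Ker m_k$ by $\CO(-2g)\otimes\CB_{k-4}\to\CU(-g)\otimes\CB_{k-3}$. I would realize the whole four-term complex as the Clifford–Koszul complex of the pair $(\CU,q|_{\CU})$ — the curved analogue, for the Clifford rather than the exterior algebra, of the Koszul complex \eqref{om} — whose differentials are Clifford multiplication by $\CU$ plus the correction by $q|_{\CU}$ needed for $d^2=0$ on $\Gr_Y(2,\CV)$, and which over $M$ (where $q|_{\CU}=0$) degenerates to the Koszul-type resolution of $\CS_k$ attached to the free $\Lambda^\bullet\CU$-module above. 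As $M$ has codimension $3$ and all terms are locally free on the smooth $\Gr_Y(2,\CV)$, exactness reduces to two local checks: acyclicity over the open set $\{q|_{\CU}\neq0\}$ — a pointwise Clifford-algebra computation, including along the rank-$1$ locus of $q|_{\CU}$ over $D_1$, where $\mathrm{Cl}(\CU,q|_{\CU})$ is not semisimple — and, over $M$, the computation of the previous step (again invoking smoothness of $M$ over $D_2$). Pinning down the differentials of the Clifford–Koszul complex and carrying out these local checks is the remaining technical content; the rest is formal.
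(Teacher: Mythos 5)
Your overall strategy — first show the cokernel is $\alpha_*$ of a sheaf on $M$, then identify that sheaf, then extend the presentation to a four-term resolution, with \eqref{blsk} handled formally — is a reasonable organization, and steps one (supported on $M$ scheme-theoretically) and three (\eqref{blsk}) match the paper closely. But for the identification of $\CS_k$ and the exactness of \eqref{alsk} you take a genuinely different and, as written, incomplete route, and you explicitly flag the hardest parts as unfinished ("I expect this local analysis over $D_2$ to be the main obstacle," "carrying out these local checks is the remaining technical content"). That is precisely where the paper's argument gains.

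The paper does not compute $\CS_k$ by a spinor-bundle analysis on $M$ followed by a separate "Clifford--Koszul" resolution. Instead it writes down the four-term complex \eqref{alsk} explicitly on $\Gr_Y(2,\CV)$ (the maps are Clifford multiplications, the partial dual $\CB_{k-4}\to\CU^\vee\otimes\CB_{k-3}$, and the double-action map $\CU\otimes\CB_{k-3}\to\CU^\vee\otimes\CB_{k-1}$, suitably twisted via $\CU^\vee(-g)\cong\CU$), then observes that each term carries a natural filtration coming from the grading of $\Lambda^\bullet\CV$ inside $\CB_\bullet$, and runs the spectral sequence of the filtered complex. The first page consists of short wedge complexes whose cohomology is computable by hand; the second page then splits into two shifted copies of the Koszul complex \eqref{om} of the section $s$, twisted by $\CO$ and by $\det\CV\otimes\CL(g)$ (for $k=0$), or into a single copy twisted by $\CV/\CU$ (for $k=1$). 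Since the Koszul complex is exact away from degree zero, the whole complex is exact in nonzero degrees and the cohomology in degree zero is identified as the required extension (for even $k$) or as $\CV/\CU\otimes\CL^{-k}$ (for odd $k$). This produces \eqref{alsk}, \eqref{seven}, and \eqref{sodd} simultaneously in a single calculation.

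Two concrete problems with your route. First, your local analysis near the points of $D_2$ invokes the ODP hypothesis (via $\kappa_{y_i}$ being an isomorphism) to control $\CS_k$, but the proposition does not need the ODP hypothesis at all: the only input the paper's proof uses is that $s$ is a regular section, i.e.\ that $M$ has the expected codimension $3$ in $\Gr_Y(2,\CV)$ so that the Koszul complex \eqref{om} is exact. By pulling the argument back to $\Gr_Y(2,\CV)$ and reducing everything to the Koszul complex, the paper never has to look at the singular fibres of $M\to X$ separately. Second, your "Clifford--Koszul complex" is left unconstructed: you do not pin down the correction terms needed for $d^2=0$ on the ambient Grassmannian, nor do you prove exactness, and you yourself note that the pointwise check along the rank-$1$ locus over $D_1$ (where $\mathrm{Cl}(\CU,q|_\CU)$ is not semisimple) is nontrivial. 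The paper's filtered-complex argument replaces both of your local checks by the single, already-known exactness of \eqref{om}. If you want to complete your approach you would need to actually write the differentials and verify exactness; the efficient way to do that is exactly the filtration-and-spectral-sequence device the paper uses, at which point your separate computation of $\CS_k$ becomes redundant.
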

\begin{proof}
First let us check that the cokernels are supported on $M$ scheme-theoretically.
For this we note that the composition of the maps
$S^2\CU\otimes\CB_{k-2} \to \CU\otimes\CB_{k-1} \to \CB_k$
(both of which are induced by the action of $\CV$ on $\CB$)
coincides with the map
$S^2\CU\otimes\CB_{k-2} \cong S^2\CU\otimes\CB_k\otimes\CL \to \CB_k$
induced by the section $s$
defining the family $Q$ (this follows from the definition of the Clifford multiplication).
It follows that the cokernel is a quotient of $\alpha_*\alpha^*\CB_k$, hence it can be written
as $\alpha_*\CS_k$, where $\CS_k$ is a sheaf of $\CB_0$-modules on $M$.
Note also that the formulas~\eqref{blsk} follow from the definition of $\CS_k$ combined with equations~\eqref{blbk}
and exactness of functors $-\otimes_{\CB_0}\CB_l$ and $\CHom_{\CB_0}(\CB_l,-)$.
So, it remains to verify~\eqref{sodd}, \eqref{seven} and~\eqref{alsk}.

For this consider the maps $\CB_{k-4} \to \CB_{k-3}\otimes\CU^\vee$ obtained by the partial dualization
from the maps $\CU\otimes\CB_{k-4} \to \CB_{k-3}$. Also consider the composition
$\CU\otimes\CB_{k-3} \to \CV\otimes\CB_{k-3} \to \CV^\vee\otimes\CB_{k-1} \to \CU^\vee\otimes\CB_{k-1}$,
where the middle map is induced by the double action of $\CV$ on $\CB$.
Finally, after appropriate twistings and identification $\CU^\vee(-g) \cong \CU$, we compose a sequence
$$
0 \to \CO(-2g)\otimes\CB_{k-4} \to \CU(-g)\otimes\CB_{k-3} \to \CU\otimes\CB_{k-1} \to \CB_k \to 0.
$$
It is easy to check that the compositions of the arrows are zero, so the constructed sequence of maps is a complex.
Note that each term of the complex is naturally filtered. Consider the spectral sequence of the filtered complex
in case $k = 0$. The first term looks like
$$
\xymatrix@R=5pt{
\CO(-2g) \otimes \Lambda^4\CV \otimes \CL^4 \\
\CO(-2g) \otimes \Lambda^2\CV \otimes \CL^3 \ar[r] & \CU(-g) \otimes \Lambda^3\CV \otimes \CL^3 \\
\CO(-2g) \otimes \CL^2 \ar[r] & \CU(-g) \otimes \CV \otimes \CL^2 \ar[r] & \CU \otimes \Lambda^3\CV \otimes \CL^2 \ar[r] & \Lambda^4\CV \otimes \CL^2 \\
&& \CU \otimes \CV \otimes \CL \ar[r] & \Lambda^2\CV \otimes \CL \\
&&& \CO
}
$$
The rows are natural complexes with maps corresponding to the wedge multiplication.
Their cohomology are easy to compute, so it is not difficult to see that the second term looks like
$$
\xymatrix@R=5pt{
\det\CV\otimes\CL^4(-2g) \ar[dr] \\
\CL^3(-3g) \ar[dr] & \det\CV\otimes\CL^3\otimes S^2\CU \ar[dr] \\
& \CL^2\otimes S^2\CU(-g) \ar[dr] & \det\CV\otimes\CL^2\otimes S^2\CU(g) \ar[dr] \\
&& \CL\otimes S^2\CU \ar[dr] & \det\CV\otimes\CL\otimes\CO(g) \\
&&& \CO
}
$$
The arrows here are induced by $s$. So, it is easy to see that the bottom chain is the Koszul complex of $s$,
while the top chain is the same complex twisted by $\det\CV\otimes\CL(g)$.
Hence the spectral sequence degenerates in the third term and shows that the cohomology of the above complex
is supported in degree zero and is an extension of $\alpha_*\alpha^*(\det\CV\otimes\CL(g))$ by $\alpha_*\alpha^*\CO$.
Since we already know that it is supported on~$M$, we conclude that it can be written as $\alpha_*\CS_0$,
where $\CS_0$ is an extension of $\det\CV\otimes\CL(g)$ by $\CO$ on $M$. This gives~\eqref{seven} and~\eqref{alsk} for $\CS_0$.

Analogously, consider the complex for $k = 1$.
The first term of the spectral sequence looks like
$$
\xymatrix@R=5pt{
\CO(-2g)\otimes\Lambda^3\CV\otimes\CL^3 \ar[r] & \CU(-g)\otimes\Lambda^4\CV\otimes\CL^3 \\
\CO(-2g)\otimes\CV\otimes\CL^2 \ar[r] & \CU(-g)\otimes\Lambda^2\CV\otimes\CL^2 \ar[r] & \CU\otimes\Lambda^4\CV\otimes\CL^2 \\
& \CU(-g)\otimes\CL \ar[r] & \CU\otimes\Lambda^2\CV\otimes\CL \ar[r] & \Lambda^3\CV\otimes\CL \\
&& \CU \ar[r] & \CV
}
$$
The maps are induced by the wedge multiplication, so one can check that the second term looks like
$$
\xymatrix@R=5pt{
\CV/\CU\otimes\CL^3(-3g) \ar[dr] \\
& \CV/\CU \otimes\CL^2\otimes S^2\CU(-g) \ar[dr] \\
&& \CV/\CU \otimes \CL\otimes S^2\CU \ar[dr] \\
&&& \CV/\CU
}
$$
The maps are induced by $s$, so it is the Koszul complex of $s$ tensored with $\CV/\CU$, hence $\CS_1 \cong \CV/\CU$.
This gives~\eqref{sodd} and~\eqref{alsk} for $\CS_1$.
For other $\CS_k$ we deduce~\eqref{sodd}, \eqref{seven} and~\eqref{alsk} by a suitable twist.
%
\end{proof}

Applying the functor $\rho_*$ to the resolutions~\eqref{alsk} twisted by $\CO(-g)$ we deduce the following

\begin{corollary}\label{rhosk}
We have $\rho_*(\CS_k) \cong \CB_k$, $\rho_*(\CS_k(-g)) = 0$.
\end{corollary}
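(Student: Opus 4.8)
The plan is to compute $R\rho_*\CS_k$ and $R\rho_*(\CS_k(-g))$ by pushing the Koszul-type resolution~\eqref{alsk} forward to $Y$ term by term. Factor $\rho$ as $M \exto{\alpha} \Gr_Y(2,\CV) \exto{\pi} Y$; since $\alpha$ is a closed immersion, $R\rho_*\CS_k \cong R\pi_*(\alpha_*\CS_k)$, and~\eqref{alsk} identifies $\alpha_*\CS_k$ with the complex
\[
[\,\CO(-2g)\otimes\CB_{k-4} \to \CU(-g)\otimes\CB_{k-3} \to \CU\otimes\CB_{k-1} \to \CB_k\,]
\]
placed in cohomological degrees $-3,\dots,0$. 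Tensoring~\eqref{alsk} with $\CO(-g)$ (and using the projection formula for the closed immersion $\alpha$) gives the analogous complex resolving $\alpha_*(\CS_k(-g))$, with terms $\CO(-3g)\otimes\CB_{k-4} \to \CU(-2g)\otimes\CB_{k-3} \to \CU(-g)\otimes\CB_{k-1} \to \CB_k(-g)$.

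Every term of these two complexes has the shape $\CF\otimes\CB_j$ with $\CF$ one of $\CO(-ng)$ or $\CU(-ng)$ for $0\le n\le 3$ and $\CB_j$ pulled back from $Y$, so by the projection formula $R\pi_*(\CF\otimes\CB_j)\cong R\pi_*(\CF)\otimes_{\CO_Y}\CB_j$, and it suffices to know the relative cohomology of $\CF$ along the fibres $\Gr(2,4)$ of $\pi$. Here (relative) Borel--Weil--Bott gives $R\pi_*\CO=\CO_Y$ while $R\pi_*\CO(-ng)=0$ for $n=1,2,3$; and using the rank-$2$ identity $\CU\cong\CU^\vee(-g)$ the remaining bundles become $\CU^\vee(-g),\CU^\vee(-2g),\CU^\vee(-3g)$, each of which is acyclic on $\Gr(2,4)$ by Bott's formula, so that $R\pi_*\CU=R\pi_*\CU(-g)=R\pi_*\CU(-2g)=0$. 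Feeding these into the hypercohomology spectral sequence $E_1^{p,q}=R^q\pi_*(C^p)\Rightarrow R^{p+q}\pi_*(C^\bullet)$ of the pushed-forward complex: for~\eqref{alsk} itself the only surviving entry is $\CB_k$ in bidegree $(0,0)$, whence $\rho_*\CS_k\cong\CB_k$ (and, as a bonus, $R^{>0}\rho_*\CS_k=0$); for the $\CO(-g)$-twisted complex $R\pi_*$ of every term vanishes, so $R\rho_*(\CS_k(-g))=0$, in particular $\rho_*(\CS_k(-g))=0$.

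There is no genuine obstacle here: the argument is pure bookkeeping once the fibrewise vanishings $H^\bullet(\Gr(2,4),\CO(-ng))=0$ and $H^\bullet(\Gr(2,4),\CU^\vee(-ng))=0$ for $n=1,2,3$ are recorded --- and these are precisely what the choice of twists in~\eqref{alsk} was arranged to produce. (Alternatively one can compute the fibrewise cohomology by hand and invoke cohomology and base change, but the Bott computation is the quickest route.)
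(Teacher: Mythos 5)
Your argument is correct and is exactly what the paper's one-line indication (``Applying the functor $\rho_*$ to the resolutions~\eqref{alsk} twisted by $\CO(-g)$'') has in mind: push the Koszul-type resolution forward along $\pi:\Gr_Y(2,\CV)\to Y$ term by term, using the projection formula and the fibrewise Bott vanishings $H^\bullet(\Gr(2,4),\CO(-ng))=H^\bullet(\Gr(2,4),\CU^\vee(-ng))=0$ for $n=1,2,3$. You have simply supplied the bookkeeping that the paper leaves implicit.
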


\begin{corollary}\label{exts0}
The extension in~\eqref{seven} is nontrivial.
\end{corollary}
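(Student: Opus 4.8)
The plan is to suppose that the sequence \eqref{seven} splits, for some value of $k$, and to reach a contradiction by pushing it forward along $\rho$ after an auxiliary twist by $\CO_M(-g)$. The point is that twisting by $\CO_M(-g)$ turns the middle term $\CS_{2k}$ into something with vanishing direct image (by Corollary~\ref{rhosk}), while the quotient term stays "large".

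Concretely, assume \eqref{seven} is split and tensor it with $\CO_M(-g)$. Since $\det\CV\otimes\CL^{1-k}(g)\otimes\CO_M(-g)\cong\rho^*\bigl(\det\CV\otimes\CL^{1-k}\bigr)$, this produces a still-split short exact sequence
$$
0 \to \CL^{-k}(-g) \to \CS_{2k}(-g) \to \rho^*\bigl(\det\CV\otimes\CL^{1-k}\bigr) \to 0 .
$$
A splitting of it is a section of the quotient morphism, hence it remains a section after applying the functor $\rho_*$; therefore $\rho_*\rho^*\bigl(\det\CV\otimes\CL^{1-k}\bigr)$ would be a direct summand of $\rho_*\bigl(\CS_{2k}(-g)\bigr)$. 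But $\rho_*\bigl(\CS_{2k}(-g)\bigr)=0$ by Corollary~\ref{rhosk}, while by the projection formula $\rho_*\rho^*\bigl(\det\CV\otimes\CL^{1-k}\bigr)\cong\det\CV\otimes\CL^{1-k}\otimes\rho_*\CO_M$, which is nonzero because the natural morphism $\CO_Y\to\rho_*\CO_M$ is nonzero. This contradiction proves that \eqref{seven} is nontrivial.

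The step that takes some thought is the preliminary twist by $\CO_M(-g)$: this is precisely what makes the vanishing of Corollary~\ref{rhosk} applicable, and without it the argument collapses. Indeed, applying $\rho_*$ directly to \eqref{seven} and using the Koszul resolution \eqref{om} one finds that $\rho_*\CS_{2k}=\CB_{2k}$, that $\rho_*\CO_M$ is a rank-two sheaf (with $\Spec_Y(\rho_*\CO_M)=X$), and that $\rho_*\bigl(\det\CV\otimes\CL^{1-k}(g)\bigr)$ is locally free of rank six; a short computation then shows that the hypothetical split extension has the same underlying $\CO_Y$-module as $\CB_{2k}$, so no inconsistency appears at that level. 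As an independent sanity check one can argue fibrewise: over a point $y\notin D_1$ the even Clifford algebra $\CB_0|_y$ is a product of two copies of the algebra of $2\times2$ matrices, whose two simple factors correspond to the two rulings $C^\pm\cong\PP^1$ of the smooth quadric $Q_y$; Morita equivalence then forces $\CS_0|_{C^\pm}\cong\CO_{\PP^1}(a)^{\oplus2}$, and since $\deg\bigl(\det\CV\otimes\CL(g)\bigr)|_{C^\pm}=2$ this gives $a=1$, i.e.\ $\CS_0|_{C^\pm}$ is the nonsplit extension of $\CO_{\PP^1}(2)$ by $\CO_{\PP^1}$ — consistent with the conclusion above.
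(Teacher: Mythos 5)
Your argument is correct and follows the same strategy as the paper's proof: assume a splitting of~\eqref{seven}, twist by $\CO_M(-g)$, push forward along $\rho$, and derive a contradiction from the vanishing $\rho_*(\CS_k(-g))=0$ of Corollary~\ref{rhosk}. The only difference is a mild streamlining: you invoke the projection formula and the nonvanishing of $\CO_Y\to\rho_*\CO_M$ rather than the explicit formulas for $\rho_*\CO_M$ and $\rho_*\CO_M(-g)$ obtained from the Koszul resolution~\eqref{om}.
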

\begin{proof}
Assume that $\CS_0 \cong \CO \oplus \det\CV\otimes\CL(g)$. Then
$$
\rho_*(\CS_0(-g)) \cong
\rho_*((\CO \oplus \det\CV\otimes\CL(g))(-g)) \cong
\rho_*(\CO(-g) \oplus \det\CV\otimes\CL).
$$
Using~\eqref{om} it is easy to see that $\rho_*\CO_M = \CO_Y \oplus \det\CV\otimes\CL^2$ and $\rho_*(\CO(-g)) = \det\CV\otimes\CL \oplus (\det\CV)^2\otimes\CL^3$,
so the RHS is nontrivial, which contradicts~\ref{rhosk}.
\end{proof}

\begin{corollary}\label{rhosd}
We have $\rho_*(\CS_k^\vee) = 0$.
\end{corollary}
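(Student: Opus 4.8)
The plan is to deduce this from Corollary~\ref{rhosk} by observing that every $\CS_k$ is a rank-$2$ vector bundle on $M$, so that dualizing it amounts to twisting by the inverse of its determinant, and that this determinant differs from a line bundle pulled back from $Y$ only by the twist $\CO(g)$.

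First I would note that $\CS_k$ is locally free of rank $2$: for odd $k$ this is immediate from~\eqref{sodd}, while for even $k$ it follows from~\eqref{seven}, an extension of line bundles being locally free. For any rank-$2$ bundle $E$ the wedge pairing $E\otimes E\to\det E$ yields a canonical isomorphism $E^\vee\cong E\otimes(\det E)^{-1}$, hence $\CS_k^\vee\cong\CS_k\otimes(\det\CS_k)^{-1}$. To compute $\det\CS_k$, recall that on $\Gr_Y(2,\CV)$ the tautological sequence $0\to\CU\to\CV\to\CV/\CU\to0$ together with the identity $\CO(g)\cong\Lambda^2\CU^\vee$ (valid since $g=c_1(\CU^\vee)$ and $\rank\CU=2$) gives $\det(\CV/\CU)\cong\det\CV\otimes\CO(g)$. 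Substituting this into~\eqref{sodd} and~\eqref{seven} one finds $\det\CS_k\cong\rho^*A_k\otimes\CO(g)$ for a line bundle $A_k$ on $Y$, namely $A_{2m+1}=\det\CV\otimes\CL^{-2m}$ and $A_{2m}=\det\CV\otimes\CL^{1-2m}$ — but only the fact that $A_k$ is pulled back from $Y$ will be used.

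Putting these together, $\CS_k^\vee\cong\CS_k\otimes\CO(-g)\otimes\rho^*A_k^{-1}=\CS_k(-g)\otimes\rho^*A_k^{-1}$, and then the projection formula and Corollary~\ref{rhosk} give
$$
\rho_*(\CS_k^\vee)\cong\rho_*\bigl(\CS_k(-g)\bigr)\otimes A_k^{-1}=0.
$$
There is no serious obstacle here; the only points deserving a line of justification are that $\CS_{2k}$ is genuinely a vector bundle (not merely a coherent extension) and that $\det\CS_{2k}\in\Pic(M)$ is, up to the twist $\CO(g)$, pulled back from $Y$ — both of which are read off directly from~\eqref{seven}.
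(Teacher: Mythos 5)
Your proof is correct and follows essentially the same route as the paper: use that $\CS_k$ has rank $2$, so $\CS_k^\vee\cong\CS_k\otimes(\det\CS_k)^{-1}$, compute $\det\CS_k$ from~\eqref{sodd}/\eqref{seven} to see that it is $\CO(g)$ times a pullback from $Y$, and then invoke the projection formula together with Corollary~\ref{rhosk}. The paper just states the determinant uniformly as $\det\CS_k=\det\CV\otimes\CL^{1-k}(g)$, which agrees with your case-by-case formulas.
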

\begin{proof}
Indeed, since $\CS_k$ is of rank 2 and $\det\CS_k = \det\CV\otimes\CL^{1-k}(g)$,
we have $\CS_k^\vee \cong \CS_k(-g)\otimes\det\CV^\vee\otimes\CL^{k-1}$, hence its
pushforward is a twist of $\rho_*(\CS_k(-g))$ which is zero.
\end{proof}

Another consequence is the following

\begin{corollary}\label{rhosds}
We have $\rho_*(\CS_l^\vee\otimes\CS_k) \cong\CB_{k-l}$.
\end{corollary}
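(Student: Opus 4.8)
The plan is to deduce the result from the computation of $R\rho_*\CS_k$ that is already contained in (the proof of) Corollary~\ref{rhosk}: pushing forward the resolution~\eqref{alsk}, untwisted and twisted by $\CO(-g)$, one in fact gets $R\rho_*\CS_k\cong\CB_k$ and $R\rho_*(\CS_k(-g))=0$ with no higher cohomology, because the fibrewise cohomology on $\Gr(2,4)$ of $\CU$, $\CU(-g)$, $\CU(-2g)$, $\CO(-g)$, $\CO(-2g)$, $\CO(-3g)$ vanishes. I will also use that $\CS_l$ is locally free, so that $\CS_l^\vee\otimes\CS_k=\CHom_{\CO_M}(\CS_l,\CS_k)$ carries no derived subtlety, and I will treat even and odd $l$ separately.

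For $l=2m$ even, I would dualize the extension~\eqref{seven} and tensor with $\CS_k$ to get a short exact sequence
$$0\to\det\CV^\vee\otimes\CL^{m-1}\otimes\CS_k(-g)\to\CS_l^\vee\otimes\CS_k\to\CL^m\otimes\CS_k\to0.$$
Applying $R\rho_*$, the left term has vanishing derived pushforward and the right term gives $\CL^m\otimes\CB_k\cong\CB_{k-2m}=\CB_{k-l}$, so $R\rho_*(\CS_l^\vee\otimes\CS_k)\cong\CB_{k-l}$, concentrated in degree $0$. This case is immediate.

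For $l=2m+1$ odd I have $\CS_l\cong(\CV/\CU)\otimes\CL^{-m}$ by~\eqref{sodd}. I would dualize the restriction to $M$ of $0\to\CU\to\CV\to\CV/\CU\to0$ and tensor with $\CL^m\otimes\CS_k$, obtaining
$$0\to\CS_l^\vee\otimes\CS_k\to\CV^\vee\otimes\CL^m\otimes\CS_k\to\CU^\vee_{|M}\otimes\CL^m\otimes\CS_k\to0.$$
The middle term has $R\rho_*=\CV^\vee\otimes\CL^m\otimes\CB_k$, so everything comes down to $R\rho_*(\CU^\vee_{|M}\otimes\CS_k)$, which I would compute by tensoring~\eqref{alsk} for $\CS_k$ with $\CU^\vee$ on $\Gr_Y(2,\CV)$ (it stays exact) and applying $R\pi_*$. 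The fibrewise cohomology on $\Gr(2,4)$ of the four coefficient bundles is $R\pi_*\CU^\vee=\CV^\vee$, $R\pi_*(\CU^\vee\otimes\CU)=\CO_Y$ ($\CU$ is exceptional on $\Gr(2,4)$), $R\pi_*(\CU^\vee\otimes\CU(-g))=R\pi_*(S^2\CU\oplus\CO(-g))=0$, and $R\pi_*(\CU^\vee(-2g))=R\pi_*\CU(-g)=0$, so $R\rho_*(\CU^\vee_{|M}\otimes\CS_k)$ is represented by the two-term complex $\CB_{k-1}\to\CV^\vee\otimes\CB_k$ in degrees $-1,0$; being the derived pushforward of a sheaf, this map is injective and $R\rho_*(\CU^\vee_{|M}\otimes\CS_k)\cong\Coker(\CB_{k-1}\hookrightarrow\CV^\vee\otimes\CB_k)$. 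Plugging this into the long exact sequence of $R\rho_*$ of the displayed sequence and identifying the connecting map with the natural projection — using the naturality of~\eqref{alsk} in $\CU\hookrightarrow\CV$ and $\pi_*\CU=0$ (which kills the $\CU\otimes\CB_{k-1}$ contribution on the $\CV^\vee$-side) — I get $\rho_*(\CS_l^\vee\otimes\CS_k)\cong\CL^m\otimes\CB_{k-1}\cong\CB_{k-(2m+1)}=\CB_{k-l}$, with vanishing higher direct images.

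The hard part is the odd case, and in it the need to control morphisms and not just isomorphism classes: a rank count leaves $R\rho_*(\CS_l^\vee\otimes\CS_k)$ undetermined, so I must (i) check injectivity of the "left Clifford multiplication" map $\CB_{k-1}\to\CV^\vee\otimes\CB_k$ — which holds because it is already injective over the generic point of $Y$, where the quadratic form is nondegenerate — and (ii) verify that the connecting homomorphism in the long exact sequence is the quotient map, for which the functoriality of the construction of~\eqref{alsk} with respect to $\CU\hookrightarrow\CV$ must be used carefully. The even case, by contrast, is a two-line consequence of Corollary~\ref{rhosk}.
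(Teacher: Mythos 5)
Your proof is correct, but the route for odd $l$ is genuinely different from the paper's. The paper establishes the case $l=0$ exactly as you do (dualize the extension~\eqref{seven}, tensor with $\CS_k$, apply $\rho_*$ and Corollary~\ref{rhosk}), and then dispatches \emph{all} remaining $l$ in one stroke: since $\CS_l\cong\CS_0\otimes_{\CB_0}\CB_l$ and $-\otimes_{\CB_0}\CB_{-l}$ is an exact invertible twist that commutes with $\rho_*$, one gets $\rho_*(\CS_l^\vee\otimes\CS_k)\cong\rho_*(\CS_0^\vee\otimes\CS_k)\otimes_{\CB_0}\CB_{-l}\cong\CB_k\otimes_{\CB_0}\CB_{-l}\cong\CB_{k-l}$ directly from~\eqref{blsk}. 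Your even case is this same argument restricted to the sub-case where the $\CB_{-l}$-twist is literally an $\CL$-twist; for odd $l$ you instead replace the bimodule twist by a concrete computation, dualizing the tautological sequence $0\to\CU\to\CV\to\CV/\CU\to 0$ and resolving $\CU^\vee_{|M}\otimes\CS_k$ by the $\CU^\vee$-twist of~\eqref{alsk}. That computation is sound — the fibrewise vanishings on $\Gr(2,4)$ of $\CU$, $S^2\CU$, $\CO(-g)$, $\CU(-g)$ all hold by Borel--Weil--Bott, and since the map on the degree-$0$ terms of the pushed-forward resolutions is induced by the adjunction identity $\CV^\vee\to\pi_*\CU^\vee$, the map $\CV^\vee\otimes\CB_k\to R\rho_*(\CU^\vee_{|M}\otimes\CS_k)$ is indeed the quotient onto $\Coker(\CB_{k-1}\hookrightarrow\CV^\vee\otimes\CB_k)$, which pins down the kernel as $\CB_{k-1}$. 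The trade-off is clear: the paper's $\CB_{-l}$-twisting is shorter and uniform in $l$ but leans on the (somewhat implicit) bimodule bookkeeping for the invertible sheaves $\CB_l$, whereas your argument is more explicit and self-contained but pays for it with the extra Grassmannian cohomology computations and the care needed to identify the map of pushed-forward complexes. Both are valid; noticing and exploiting the twist $\CS_0^\vee\otimes\CS_k\mapsto\CS_l^\vee\otimes\CS_k$ by $\CB_{-l}$ is the one idea that would have collapsed your odd case to two lines.
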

\begin{proof}
First of all consider the case $l = 0$. Then dualizing~\eqref{seven} we obtain an exact triple
$$
0 \to \det\CV^\vee\otimes\CL^{-1}(-g) \to \CS_0^\vee \to \CO_M \to 0.
$$
Tensoring it by $\CS_k$, pushing forward and using~\ref{rhosk}, we obtain the claim.
Now for arbitrary $l$ the formula follows by tensoring with $\CB_{-l}$ and using~\eqref{blsk}.
\end{proof}

Now we can describe the embedding $\D^b(Y,\CB_0) \to \D^b(M)$.

\begin{theorem}\label{phi}
The functor $\Phi:\D^b(Y,\CB_0) \to \D^b(M)$, $\CF \mapsto \CS_0\otimes_{\CB_0} \rho^*\CF$ is fully faithful.
Moreover,
$$
\Phi(\CB_k) \cong \CS_k.
$$
\end{theorem}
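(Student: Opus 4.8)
The plan is to verify fully faithfulness by the standard criterion: show that the natural map
$$
\Hom_{\D^b(Y,\CB_0)}(\CB_k,\CB_l[t]) \longrightarrow \Hom_{\D^b(M)}(\Phi(\CB_k),\Phi(\CB_l)[t])
$$
is an isomorphism for all $k,l,t$, since the twisted sheaves $\CB_k$ generate $\D^b(Y,\CB_0)$. Because $\Phi$ is exact (it is the composition of the pullback $\rho^*$, which is exact, with $-\otimes_{\CB_0}\CS_0$, which is exact since $\CS_0$ is $\CB_0$-flat — it is an extension of locally free $\CO_Y$-modules), and because $\rho^*\CB_k$ pulled back to $\Gr_Y(2,\CV)$ is the sheaf denoted $\CB_k$ there, the first step is to identify $\Phi(\CB_k)$. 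By~\eqref{blsk} we have $\CS_0\otimes_{\CB_0}\CB_k \cong \CS_k$, and since $\rho^*\CB_k$ is a sheaf of $\CB_0$-modules obtained from the pulled-back bundle, $\Phi(\CB_k) = \CS_0\otimes_{\CB_0}\rho^*\CB_k \cong \CS_k$; this is the stated identity.

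Next I would compute both sides. On the source side, using~\eqref{blbk},
$$
\RHom_{\D^b(Y,\CB_0)}(\CB_k,\CB_l) \cong \RGamma(Y,\RCHom_{\CB_0}(\CB_k,\CB_l)) \cong \RGamma(Y,\CB_{l-k}).
$$
On the target side, $\Phi(\CB_k)\cong\CS_k$ and $\Phi(\CB_l)\cong\CS_l$, so I need $\RHom_M(\CS_k,\CS_l)$. Write this as $\RGamma(M,\RCHom_{\CO_M}(\CS_k,\CS_l))$; since $\CS_k$ is locally free of rank $2$ on $M$, $\RCHom_{\CO_M}(\CS_k,\CS_l) = \CS_k^\vee\otimes\CS_l$, which is an honest sheaf. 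Then push forward along $\rho$: $\RHom_M(\CS_k,\CS_l) \cong \RGamma(Y,\rho_*(\CS_k^\vee\otimes\CS_l))$, provided the higher direct images vanish — and indeed Corollary~\ref{rhosds} gives exactly $\rho_*(\CS_k^\vee\otimes\CS_l)\cong\CB_{l-k}$, with the vanishing of higher direct images implicit in the pushforward computation there (obtained by applying $\rho_*$ to the resolution~\eqref{alsk}). Hence $\RHom_M(\CS_k,\CS_l)\cong\RGamma(Y,\CB_{l-k})$, matching the source side.

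The remaining point — and the part that needs the most care — is to check that the identification is induced by the functor $\Phi$ itself, not merely an abstract isomorphism of graded groups. For this I would argue that the canonical map $\RCHom_{\CB_0}(\CB_k,\CB_l)\to \RCHom_{\CO_M}(\CS_k,\CS_l)$ induced by applying $\CS_0\otimes_{\CB_0}\rho^*(-)$, after pushing forward, recovers the isomorphism $\CB_{l-k}\cong\rho_*(\CS_k^\vee\otimes\CS_l)$ of Corollary~\ref{rhosds}; this is a compatibility that can be traced through the construction of $\CS_0^\vee$ via the dual of~\eqref{seven} and the module structure maps~\eqref{blsk}. Once naturality is in hand, fully faithfulness follows. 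The main obstacle is thus not any single cohomology computation (those are all handled by the corollaries of the previous section) but the bookkeeping to confirm that $\Phi$ realizes the comparison isomorphism; this is where I would spend most of the effort, reducing to the case $k=0$ via tensoring with $\CB_{-k}$ and then unwinding the exact triple $0\to\det\CV^\vee\otimes\CL^{-1}(-g)\to\CS_0^\vee\to\CO_M\to 0$ used in the proof of Corollary~\ref{rhosds}.
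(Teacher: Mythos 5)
Your approach is a genuine alternative to the paper's, but it has two weaknesses, one of which you flag yourself and one you don't.

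The paper's proof is shorter and more structural: it writes down the right adjoint $\Phi^!(\CG) = \rho_*(\CS_0^\vee\otimes_{\CO_M}\CG)$ explicitly (by the same adjunction steps you use implicitly), and then computes
$$
\Phi^!(\Phi(\CF)) \;\cong\; \rho_*(\CS_0^\vee\otimes_{\CO_M}\CS_0\otimes_{\CB_0}\rho^*\CF) \;\cong\; \rho_*(\CS_0^\vee\otimes_{\CO_M}\CS_0)\otimes_{\CB_0}\CF \;\cong\; \CB_0\otimes_{\CB_0}\CF \;\cong\; \CF,
$$
using the projection formula in the middle step and Corollary~\ref{rhosds} for $\rho_*(\CS_0^\vee\otimes\CS_0)\cong\CB_0$. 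Because the middle isomorphism is the projection formula, it is automatically natural in $\CF$, and because the composite isomorphism $\Phi^!\circ\Phi\cong\id$ is built from unit/counit data, full faithfulness follows at once for \emph{all} objects of $\D^b(Y,\CB_0)$. This sidesteps both of the following issues in your proposal.

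First, the claim that the objects $\{\CB_k\}_{k\in\ZZ}$ generate $\D^b(Y,\CB_0)$ is not justified and is in general false without extra hypotheses. Since $\CB_{-2k}=\CB_0\otimes\CL^k$ and $\CB_{1-2k}=\CB_1\otimes\CL^k$, generation would require (something like) ampleness of $\CL$ or $\CL^\vee$ on $Y$, which is not assumed — $\CL$ is just some line subbundle of $S^2\CV^\vee$, and could even be trivial, in which case the collection $\{\CB_k\}$ is just $\{\CB_0,\CB_1\}$ and certainly does not generate $\D^b(Y,\CB_0)$. Second, as you note yourself, establishing that the comparison map between $\Ext$-groups is the one induced by $\Phi$ (rather than an abstract isomorphism of graded groups) requires nontrivial bookkeeping. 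Both difficulties evaporate once you compute $\Phi^!\circ\Phi$ as a functor: naturality is packaged into the projection formula, and no generation statement is needed. Your cohomological inputs (Corollary~\ref{rhosds} and the identifications $\Phi(\CB_k)\cong\CS_k$ via~\eqref{blsk}) are exactly the ones the paper uses, so you identified the right ingredients — the fix is to assemble them into the adjunction computation rather than a generators-and-$\Hom$ check.
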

\begin{proof}
First, note that
$$
\Hom(\Phi(\CF),\CG) =
\Hom(\CS_0\otimes_{\CB_0} \rho^*\CF,\CG) \cong
\Hom_{\CB_0}(\rho^*\CF,\CS_0^\vee\otimes_{\CO_M}\CG) \cong
\Hom_{\CB_0}(\CF,\rho_*(\CS_0^\vee\otimes_{\CO_M}\CG)).
$$
Thus the right adjoint functor $\Phi^!:\D^b(M) \to \D^b(Y,\CB_0)$ is given by
$$
\Phi^!(\CG) = \rho_*(\CS_0^\vee\otimes_{\CO_M}\CG)
$$
(the structure of $\CB_0$-module is induced by that of $\CS_0^\vee$). So, to check full faithfulness
it suffices to compute $\Phi^!\circ\Phi$. For this we note that
$$
\Phi^!(\Phi(\CF)) =
\rho_*(\CS_0^\vee\otimes_{\CO_M} \CS_0\otimes_{\CB_0} \rho^*\CF) \cong
\rho_*(\CS_0^\vee\otimes_{\CO_M} \CS_0)\otimes_{\CB_0}\CF \cong
\CB_0 \otimes_{\CB_0} \CF \cong \CF
$$
(we applied~\ref{rhosds}).
Thus $\Phi^!\circ\Phi \cong \id$, so $\Phi$ is fully faithful.
Finally, $\Phi(\CB_k) = \CS_0\otimes_{\CB_0}\CB_k \cong \CS_k$ by~\eqref{blsk}.
\end{proof}

We conclude the section with the following simple calculation.

\begin{lemma}\label{sks}
For each $i$ and each $k$ we have $(\CS_k)_{|\Sigma_i^\pm} \cong \CO_{\Sigma_i^\pm} \oplus \CO_{\Sigma_i^\pm}(1)$.
\end{lemma}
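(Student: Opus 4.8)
The plan is to restrict the two basic extension sequences for the sheaves $\CS_k$ to the plane $\Sigma=\Sigma_i^\pm=\Gr(2,W)$, where $W=W_i^\pm\subset\CV_{y_i}$ is $3$-dimensional so that $\Sigma\cong\PP^2$, and then to split the resulting rank-$2$ extensions on $\PP^2$ using the vanishing of $H^1(\PP^2,\CO(\pm1))$. First I would record the elementary facts: on $\Sigma$ the line bundle $\CO(g)$ restricts to $\CO_\Sigma(1)$ (since $\Sigma=\Gr(2,W)=\PP(W^\vee)$ and $g=c_1(\CU^\vee)$), while any sheaf pulled back from $Y$ — in particular $\CL$ and $\det\CV$ — restricts to a trivial bundle on $\Sigma$, because $\Sigma$ lies in a single fibre of $\rho$. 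Combining this with the periodicity $\CS_{k-2j}\cong\CS_k\otimes\CL^j$ (which follows from $\CB_{-2j}\cong\CB_0\otimes\CL^j$ together with~\eqref{blsk}), restriction to $\Sigma$ is unaffected by shifting $k$ by $2$, so it suffices to treat the cases $k=0$ and $k=1$.

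For $k=1$ I would use $\CS_1\cong\CV/\CU$ from~\eqref{sodd}. On $\Sigma$ the tautological subbundle $\CU$ is contained in the constant subbundle $W\otimes\CO_\Sigma\subset\CV_{y_i}\otimes\CO_\Sigma$, and $(W\otimes\CO_\Sigma)/\CU_{|\Sigma}$ is the quotient tautological line bundle on $\Gr(2,W)$, which by the computation of $c_1$ is $\CO_\Sigma(1)$. Quotienting the constant exact sequence $0\to W\otimes\CO_\Sigma\to\CV_{y_i}\otimes\CO_\Sigma\to(\CV_{y_i}/W)\otimes\CO_\Sigma\to0$ by $\CU_{|\Sigma}$ (this is legitimate since $\CV/\CU$ is locally free on $M$) gives
$$0 \to \CO_\Sigma(1) \to (\CV/\CU)_{|\Sigma} \to \CO_\Sigma \to 0.$$
This extension is classified by $\Ext^1_\Sigma(\CO_\Sigma,\CO_\Sigma(1))=H^1(\PP^2,\CO(1))=0$, so it splits and $(\CS_1)_{|\Sigma}\cong\CO_\Sigma\oplus\CO_\Sigma(1)$.

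For $k=0$ I would restrict the exact sequence~\eqref{seven}, namely $0\to\CO\to\CS_0\to\det\CV\otimes\CL(g)\to0$, to $\Sigma$. Its cokernel term is a line bundle on $M$, hence $\CO_M$-flat, so the restriction stays exact, and using the identifications above it reads
$$0 \to \CO_\Sigma \to (\CS_0)_{|\Sigma} \to \CO_\Sigma(1) \to 0.$$
Here $\Ext^1_\Sigma(\CO_\Sigma(1),\CO_\Sigma)=H^1(\PP^2,\CO(-1))=0$, so the extension splits and $(\CS_0)_{|\Sigma}\cong\CO_\Sigma\oplus\CO_\Sigma(1)$. Twisting by powers of $\CL$ then extends both cases to all $k$.

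The argument is essentially routine; the only points requiring attention are the exactness of the restriction of~\eqref{seven} — which holds because its cokernel is locally free on $M$ — and keeping track of the twists of $\CO(g)$, $\CL$ and $\det\CV$ along the fibre $\Sigma$. I do not expect any genuine obstacle beyond this bookkeeping.
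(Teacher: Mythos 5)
Your proof is correct and takes essentially the same route as the paper: restrict the sequences~\eqref{sodd} and~\eqref{seven} to $\Sigma\cong\PP^2$, identify $\CO(g)|_\Sigma\cong\CO_\Sigma(1)$ and the pullbacks from $Y$ as trivial, and split the resulting rank-$2$ extensions. The only difference is that you spell out the splitting via the vanishing of $H^1(\PP^2,\CO(\pm1))$, which the paper leaves implicit.
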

\begin{proof}
Restrict~\eqref{sodd} and~\eqref{seven} to $\Sigma = \Sigma_i^\pm$. Since $\CO(g)$ restricts to $\Sigma$ as $\CO_\Sigma(1)$ we obtain the claim for even $k$.
For odd $k$ we have to describe the restriction of $\CV/\CU$ to $\Sigma$. Since $\Sigma = \Gr(2,W) \subset \Gr(2,\CV_y)$,
we have on $\Sigma$ an exact sequence
$$
0 \to W/\CU \to \CV/\CU \to \CV_y/W \otimes \CO_\Sigma \to 0.
$$
The first term is $\CO_\Sigma(1)$ and the third is $\CO_\Sigma$. Hence $(\CV/\CU)_{|\Sigma} \cong \CO_\Sigma \oplus \CO_\Sigma(1)$.
\end{proof}

\section{The flip}\label{s-fl}

From now on we choose one of the planes $\PP(W_i^\pm) \subset Q_{y_i}$ for each point $y_i$,
say $\PP(W_i^+)$, and the corresponding plane $\Sigma_i^+ = \Gr(2,W_i^+) \subset M$.
Recall that the normal bundles of $\Sigma_i^+$ in $M$ are $\CO(-1) \oplus \CO(-1)$.
Let us apply the composition of flips in all these planes and denote by $M^+$ the resulting
Moishezon variety. More precisely, consider the blowup $\xi:\TM \to M$ of $M$ in the union
of all $\Sigma_i^+$. Then each of the exceptional divisors $E_i = \xi^{-1}(\Sigma_i^+)$
is isomorphic to $\Sigma_i^+\times\PP^1$ and its normal bundle is $\CO(-1,-1)$.
Hence in the category of Moishezon varieties it can be blown down onto a line $L_i \cong \PP^1 \subset M^+$.
Thus we have a diagram
$$
\xymatrix@!C{
& \bigsqcup E_i \ar[dl] \ar[dr] \ar[d] \\
\bigsqcup \Sigma_i^+ \ar[d] & \TM \ar[dl]_\xi \ar[dr]^{\xi^+} & \bigsqcup L_i \ar[d] \\
M && M^+
}
$$

By a result of Bondal and Orlov we have the following

\begin{proposition}[\cite{BO95}]
The functor $\xi_*(\xi^+)^*:\D^b(M^+) \to \D^b(M)$ is fully faithful.
Moreover, there is a semiorthogonal decomposition
$$
\D^b(M) = \langle \xi_*(\xi^+)^*(\D^b(M^+)), \{ \CO_{\Sigma_i^+} \}_{i=1}^N  \rangle.
$$
\end{proposition}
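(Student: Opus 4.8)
The plan is to recognize this as an instance of the general theorem of Bondal and Orlov on derived categories under flops/flips, and to reduce the statement to the precise hypotheses of that result. Specifically, I would first recall the setup of \cite{BO95}: if $X^-$ and $X^+$ are smooth (or Moishezon) varieties related by a common blowup $\widetilde{X}$ with the exceptional loci being disjoint unions of projective spaces whose normal bundles are suitably negative — here $\Sigma_i^+ \cong \PP^2$ with $\CN_{\Sigma_i^+/M} \cong \CO(-1)^{\oplus 2}$ on one side and $L_i \cong \PP^1$ with normal bundle $\CO(-1,-1,\dots)$ on the other side — then the pull-push functor $\xi_*(\xi^+)^*$ is fully faithful, with the orthogonal complement generated by the structure sheaves of the contracted loci on the $X^-$-side. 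The sign of the discrepancy (here $(\omega_M)_{|\Sigma_i^+} \cong \CO_{\Sigma_i^+}(-1)$ from Corollary~\ref{omms}, which is negative, so $M$ is the ``larger'' model and the flip goes from $M$ to $M^+$) determines that the extra generators sit in $\D^b(M)$ rather than $\D^b(M^+)$, and which $\CO$-sheaves appear.

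The key steps, in order: (1) verify the geometric input — the blowup $\xi:\TM \to M$ along $\bigsqcup \Sigma_i^+$ has exceptional divisors $E_i \cong \PP(\CN_{\Sigma_i^+/M}^\vee) \cong \PP^2 \times \PP^1$ with normal bundle $\CO(-1,-1)$, computed from $\CN_{\Sigma_i^+/M} \cong \CO(-1)^{\oplus 2}$ as in the previous proposition; (2) invoke the Nakano/Fujiki-type contractibility criterion to blow $E_i$ down in the other ruling direction, producing the Moishezon variety $M^+$ containing lines $L_i \cong \PP^1$, so that $\xi^+:\TM \to M^+$ is again a blowup; (3) apply the theorem of \cite{BO95} verbatim to the resulting flip diagram to conclude that $\xi_*(\xi^+)^*$ is fully faithful and that $\D^b(M) = \langle \xi_*(\xi^+)^*\D^b(M^+), \CE \rangle$ for an explicitly identified admissible subcategory $\CE$; (4) identify $\CE$ with $\langle \{\CO_{\Sigma_i^+}\}_{i=1}^N\rangle$ by tracking through Bondal--Orlov which sheaves generate the complement — on the ``big'' side of a flip contracting $\PP^2$'s with normal bundle $\CO(-1)^{\oplus 2}$, the complement is generated by the structure sheaves $\CO_{\Sigma_i^+}$ (one checks the numerics: $\dim M = 4$, the local model is the standard Atiyah-type flip, and the length of the semiorthogonal ``gap'' is exactly $1$, matching a single exceptional object per plane).

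The main obstacle I expect is not the abstract categorical statement but the verification that the Bondal--Orlov hypotheses genuinely apply in the \emph{Moishezon} (non-projective) category, since $M^+$ need not be projective. Here one must be a little careful: $M$ is smooth and projective (it sits in a relative Grassmannian), the blowup $\TM$ is therefore smooth projective, but the contraction $\xi^+:\TM \to M^+$ exists only as a Moishezon variety by Fujiki-type results, and one needs the derived-category formalism (Fourier--Mukai kernels, base change, the mutation arguments of \cite{BO95}) to remain valid there. This is handled in \cite{BO95} itself — their arguments are local over the flipped locus and only require properness, not projectivity — so the cleanest route is to cite their result directly rather than reprove it. A secondary, more routine point is bookkeeping the twist: one should double-check that no line-bundle twist enters the identification of the complement (i.e. that it is literally $\CO_{\Sigma_i^+}$ and not $\CO_{\Sigma_i^+}(a)$ for some $a$), which follows from Corollary~\ref{omms} since the discrepancy is $\CO_{\Sigma_i^+}(-1)$, placing us exactly in the ``minimal'' length-one case of the flip.
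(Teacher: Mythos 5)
Your proposal matches the paper exactly: the paper treats this proposition as a direct citation of Bondal--Orlov, having already established (in Section~\ref{s-pl} and at the start of Section~\ref{s-fl}) precisely the geometric input you verify — $\CN_{\Sigma_i^+/M}\cong\CO(-1)^{\oplus 2}$, the blowup $\xi:\TM\to M$ with exceptional divisors $E_i\cong\Sigma_i^+\times\PP^1$ of normal bundle $\CO(-1,-1)$, and the Moishezon blowdown $\xi^+:\TM\to M^+$ onto lines $L_i$. Your remarks on the discrepancy computation via Corollary~\ref{omms} and on the Moishezon subtlety are sound and consistent with how the paper handles it.
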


Further, we will need a detailed description of the fibers of $M^+$  over $X$.
Let $x_i = f^{-1}(y_i)$ be the nodal points of $X$ and $X_{sm} = X \setminus \{ y_i \}_{i=1}^N$
be the smooth locus of $X$.

\begin{lemma}
There is a regular morphism $M^+ \to X$ such that the diagram
$$
\xymatrix{
& \TM \ar[dl]_\xi \ar[dr]^{\xi_+} \\ M \ar[dr]_\mu && M^+ \ar[dl] \\ & X
}
$$
commutes. Moreover, over the smooth locus $X_{sm}$ the maps $M \to X$ and $M^+ \to X$ coincide.
Finally, the fiber $M^+_{x_i}$ of $M^+$ over $x_i$ is the blowup $\Tsi_i^-$ of $\Sigma_i^-$ in the points $P_i$
and the line $L_i = \xi_+(E_i)$ is the $(-1)$-curve on $\Tsi_i^-$.
\end{lemma}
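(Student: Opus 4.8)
The plan is to construct the morphism $M^+ \to X$ first over the smooth locus, then extend it across the nodes by analyzing the flip locally, and finally identify the fibers over the $x_i$.

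First I would observe that away from the nodes $x_i$ the flip does nothing: the centers $\Sigma_i^+$ and the exceptional curves $L_i$ all lie over $\{y_1,\dots,y_N\} = D_2$, hence over $\{x_1,\dots,x_N\}$ under $\mu$. So $\xi$ and $\xi_+$ are isomorphisms over $X_{sm}$, and the composition $\mu\circ\xi\circ\xi_+^{-1}: M^+|_{X_{sm}} \to X_{sm}$ is a well-defined morphism agreeing with $\mu$ there. The content of the lemma is that this morphism extends regularly over each $x_i$, so the problem is entirely local around a single node. Fix $y = y_i$, $x = x_i$, $\Sigma = \Sigma_i^+$, $\Sigma' = \Sigma_i^-$, $E = E_i$, $L = L_i$, $P = P_i = \Sigma\cap\Sigma'$.

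The key local picture is this. Near the node, $M$ is the union $\Sigma\cup\Sigma'$ in its fiber over $y$, glued to the rest, with $\Sigma\cap\Sigma' = \{P\}$ and $\mu$ contracting each of $\Sigma,\Sigma'$ to $x$; more precisely, by the corank-2 structure worked out in Section~\ref{s-pl}, $M$ near the fiber over $y$ looks (étale-locally over $Y$) like the total space of a rank-2 bundle degenerating appropriately, and one checks directly that $\mu:M\to X$ restricted to a neighborhood of $\Sigma$ is, after the double cover, the contraction $\Gr(2,W_i^+)\to \mathrm{pt}$ in the fiber direction. Blowing up $\Sigma$ replaces it by $E\cong\Sigma\times\PP^1$, and the flipped variety $M^+$ replaces $E$ by $L\cong\PP^1$ via the other projection. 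I would check that the proper transform $\widetilde{\Sigma'}$ of $\Sigma'$ in $\TM$ meets $E$ along a line (the fiber of $E\to\Sigma$ over the point $P\in\Sigma$, since $\Sigma'$ is a Cartier divisor on $M$ meeting $\Sigma$ at the single point $P$ with the blowup of $M$ along $\Sigma$ introducing a $\PP^1$ there). Under $\xi_+$ this $\widetilde{\Sigma'}$ maps isomorphically onto a blowup $\Tsi^-$ of $\Sigma'$ at $P$ — because $\widetilde{\Sigma'}\to\Sigma'$ is the blowup of $\Sigma'$ along $\Sigma'\cap\Sigma = \{P\}$, and $\widetilde{\Sigma'}\to\xi_+(\widetilde{\Sigma'})$ is an isomorphism since $\widetilde{\Sigma'}$ meets $E$ only along the one fiber being contracted transversally. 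The curve $L = \xi_+(E)$ then meets $\Tsi^- = \xi_+(\widetilde{\Sigma'})$ in the image of that fiber, which is precisely the exceptional $(-1)$-curve of $\Tsi^-\to\Sigma'$: it has self-intersection $-1$ in $\Tsi^-$ because its strict transform $\widetilde{\Sigma'}\cap E$ is a fiber of $E\to\Sigma$, and $E|_{\widetilde{\Sigma'}}$, being the restriction of $\CO_{\TM}(E)$ whose normal bundle along $E$ is $\CO(-1,-1)$, pulls this back to a $(-1)$-curve.

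To get the morphism $M^+\to X$ one then argues that $\mu\circ\xi:\TM\to X$ contracts $E$ along the projection $E\to L$ (equivalently, $\mu\circ\xi$ is constant on the fibers of $\TM\to M^+$): indeed each such fiber $\{p\}\times\PP^1\subset E$ maps under $\xi$ to the point $p\in\Sigma$, which $\mu$ sends to $x$, so $\mu\circ\xi$ is constant on it. By the universal property of the blow-down $\xi_+$ (or by the rigidity lemma, since $M^+$ is normal and $M^+\to\mathrm{pt}$ over a neighborhood of $x$ is proper with connected fibers), $\mu\circ\xi$ factors through $\xi_+$, giving the desired regular $M^+\to X$ making the diagram commute. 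Globalizing over $X_{sm}$ using the first paragraph, the two descriptions patch. Finally, the scheme-theoretic fiber $M^+_x$ is identified: it is $\xi_+(\,(\mu\circ\xi)^{-1}(x)\,) = \xi_+(\widetilde{\Sigma}\cup\widetilde{\Sigma'})$ where $\widetilde\Sigma\subset E$; but $\xi_+$ contracts $E\supset\widetilde{\Sigma}$ onto $L$, so $M^+_x = L\cup\Tsi^- = \Tsi^-$ since $L\subset\Tsi^-$. The main obstacle is the local analysis of the flip near the node — verifying that $\widetilde{\Sigma'}$ meets $E$ exactly in a single contracted fiber and transversally, so that $\xi_+|_{\widetilde{\Sigma'}}$ is an isomorphism onto a one-point blowup of $\Sigma'$ with the stated $(-1)$-curve; this requires the explicit corank-2 local model of $M$ from Section~\ref{s-pl} together with the normal bundle computation $\CN_{\Sigma/M}\cong\CO(-1)\oplus\CO(-1)$.
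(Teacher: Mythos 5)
Your argument is essentially the paper's: $\mu\circ\xi$ contracts each $E_i$ to a point, so it factors through the divisorial contraction $\xi_+$; the fiber of $\TM$ over $x_i$ is $E_i\cup\Tsi_i^-$; transversality of $\Sigma_i^+$ and $\Sigma_i^-$ at $P_i$ makes $\Tsi_i^-$ the one-point blowup of $\Sigma_i^-$ meeting $E_i$ exactly in the fiber of $E_i\to\Sigma_i^+$ over $P_i$; and $\xi_+$ restricts to an isomorphism on $\Tsi_i^-$, so that fiber becomes the $(-1)$-curve $L_i$. One minor slip worth correcting: $\Sigma_i^-$ is not a Cartier divisor on $M$ --- it has codimension $2$ in the fourfold $M$ --- and the justification you actually need (and the one the paper uses) is simply that $\Sigma_i^+$ and $\Sigma_i^-$ meet transversally in the single point $P_i$, which already gives both that the proper transform is $\Bl_{P_i}\Sigma_i^-$ and that it meets $E_i$ in the fiber over $P_i$.
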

\begin{proof}
The first claim is evident --- since the map $\xi_+:\TM \to M^+$ is a contraction
of divisors $E_i$, and the map $\mu\circ\xi:\TM \to X$ contract each of these divisors
to a point, we conclude that $\mu\circ\xi$ factors through $\xi_+$. Moreover,
since $\xi$ and $\xi_+$ are identities over $X_{sm}$, it follows that
the morphisms $M \to X$ and $M^+ \to X$ coincide over $X_{sm}$.
Finally, note that the fiber of $\TM$ over $x_i$ is the union of $E_i$
and the proper preimage of $\Sigma_i^-$. Since $\xi$ is the blowup of $\Sigma_i^+$,
and $\Sigma_i^+$ intersect $\Sigma_i^-$ transversally at $P_i$, the proper
preimage of $\Sigma_i^-$ is the blowup $\Tsi_i^-$ of $\Sigma_i^-$ at $P_i$.
Note also that $E_i \cap \Tsi_i^- = L_i$ is the fiber of $E_i \to \Sigma_i^+$ over $P_i$
and simultaneously the $(-1)$-curve on $\Tsi_i^-$. Finally, since the map $\xi_+$ is
the contraction of $E_i = \Sigma_i^+\times L_i$ onto $L_i$, hence it doesn't change $\Tsi_i^-$,
so the fiber $M^+_{x_i}$ coincides with $\Tsi_i^-$.
\end{proof}

Note that $\Tsi_i^-$ being the blowup of a plane in a point is isomorphic to
a Hirzebruch surface $F_1$. In particular, it has a canonical contraction
$\Tsi_i^- \to \PP^1$ which induces an isomorphism of the exceptional section $L_i \subset \Tsi_i^-$ onto $\PP^1$.
Denote (the pullback to $\Tsi^-_i$ of) the generator of the Picard group of $\Sigma_i$ by $h$
and the class of the exceptional line $L_i \subset \Tsi^-_i$ by $l$.
Then the class of the fiber of the projection $\Tsi_i^- \to \PP^1$ is $h-l$.

\begin{lemma}\label{ommp}
We have $\omega_{M^+|\Tsi^-_i} \cong \CO_{\Tsi^-_i}(-h-l)$.
\end{lemma}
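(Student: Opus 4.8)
The plan is to compare the canonical bundles of $M$ and $M^+$ after pulling both back to the common blow-up $\TM$, and then to restrict the resulting identity of line bundles to $\Tsi^-_i$. Since $\xi:\TM\to M$ is the blow-up of the smooth fourfold $M$ along the smooth surface $\bigsqcup_i\Sigma^+_i$ (of codimension $2$), the standard formula for the canonical class of a blow-up along a smooth centre gives
$$
\omega_{\TM}\cong\xi^*\omega_M\otimes\CO_{\TM}(E),
\qquad E:=\sum_{i=1}^N E_i.
$$
On the other hand $\xi_+:\TM\to M^+$ is, by the construction of the flip, the blow-up of the smooth fourfold $M^+$ along the smooth curve $\bigsqcup_i L_i$ (of codimension $3$): the exceptional divisor is again $E_i\cong\Sigma^+_i\times\PP^1$, now viewed as $\PP(\CN_{L_i/M^+})$ with $\CN_{L_i/M^+}\cong\CO(-1)^{\oplus 3}$. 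Hence the same formula gives $\omega_{\TM}\cong\xi_+^*\omega_{M^+}\otimes\CO_{\TM}(2E)$, and comparing the two expressions for $\omega_{\TM}$ yields $\xi_+^*\omega_{M^+}\cong\xi^*\omega_M\otimes\CO_{\TM}(-E)$.

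Next I would restrict this identity to $\Tsi^-_i$, viewed inside $\TM$ as the proper transform of $\Sigma^-_i$. By the previous lemma $\xi_+$ carries $\Tsi^-_i$ isomorphically onto the fibre $M^+_{x_i}$, so the left-hand side restricts to $\omega_{M^+}|_{\Tsi^-_i}$, the object we are after. For the first factor on the right, the previous lemma identifies $\xi|_{\Tsi^-_i}:\Tsi^-_i\to\Sigma^-_i$ with the blow-up of the plane $\Sigma^-_i$ at $P_i$, and $\omega_M|_{\Sigma^-_i}\cong\CO_{\Sigma^-_i}(-1)$ by Corollary~\ref{omms}; since $\CO_{\Sigma^-_i}(1)$ pulls back to $\CO_{\Tsi^-_i}(h)$, this factor restricts to $\CO_{\Tsi^-_i}(-h)$. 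For the second factor, $\Tsi^-_i$ is disjoint from the $E_j$ with $j\neq i$ and meets $E_i$ transversally, $E_i\cap\Tsi^-_i$ being the reduced exceptional $(-1)$-curve $L_i$ of class $l$; hence $\CO_{\TM}(E)|_{\Tsi^-_i}\cong\CO_{\Tsi^-_i}(l)$. Multiplying the two contributions gives $\omega_{M^+}|_{\Tsi^-_i}\cong\CO_{\Tsi^-_i}(-h)\otimes\CO_{\Tsi^-_i}(-l)\cong\CO_{\Tsi^-_i}(-h-l)$, as claimed.

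The comparison of canonical bundles and the restriction $(\xi^*\omega_M)|_{\Tsi^-_i}\cong\CO_{\Tsi^-_i}(-h)$ are routine; the point that needs care --- and which I regard as the heart of the argument --- is to pin down the discrepancy of $\xi_+$ (that it contributes $2E$, not $E$) together with the statement that $E_i$ meets $\Tsi^-_i$ with multiplicity one along $L_i$. Both come out of an explicit local model: near a general point of $L_i$ choose coordinates $(u_1,u_2,v_1,v_2)$ on $M$ with $\Sigma^+_i=\{v_1=v_2=0\}$ and $\Sigma^-_i=\{u_1=u_2=0\}$ (the two planes meeting transversally at $P_i$); in the chart of $\xi$ with coordinates $(u_1,u_2,v_1,t)$ and $v_2=v_1 t$ one sees $E_i=\{v_1=0\}$, the proper transform $\Tsi^-_i=\{u_1=u_2=0\}$ with chart coordinates $(v_1,t)$ mapping to $\Sigma^-_i$ by $(v_1,t)\mapsto(v_1,v_1 t)$ (so $\xi|_{\Tsi^-_i}$ is the blow-up at $P_i$), and $E_i$ cuts $\Tsi^-_i$ in the reduced divisor $\{v_1=0\}=L_i$. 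Alternatively the discrepancy of $\xi_+$ can be found without computing $\CN_{L_i/M^+}$: one restricts the relation $\omega_{\TM}\cong\xi_+^*\omega_{M^+}\otimes\CO_{\TM}(aE)$ to a $\PP^2$-fibre of $E_i\to L_i$; such a fibre is contracted to a point by $\xi_+$ but mapped isomorphically onto $\Sigma^+_i$ by $\xi$, and there $\xi^*\omega_M$ restricts to $\CO_{\Sigma^+_i}(-1)$ (Corollary~\ref{omms}) while $\CO_{\TM}(E_i)$ restricts to $\CO_{\Sigma^+_i}(-1)$, which forces $a=2$.
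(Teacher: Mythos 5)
Your proof is correct and takes essentially the same approach as the paper: compare $\omega_{\TM}$ computed via $\xi$ (discrepancy $E$) and via $\xi_+$ (discrepancy $2E$), deduce $\xi_+^*\omega_{M^+}\cong\xi^*\omega_M\otimes\CO_{\TM}(-E)$, and restrict to $\Tsi^-_i$ using $(\xi^*\omega_M)|_{\Tsi^-_i}\cong\CO(-h)$ and $\CO_{\TM}(E)|_{\Tsi^-_i}\cong\CO(l)$. The extra verifications you supply (the local model for the transversal intersection $E_i\cap\Tsi^-_i = L_i$ and the alternative computation of the discrepancy $a=2$ by restricting to a $\PP^2$-fibre of $E_i$) are sound details that the paper leaves implicit, but do not change the underlying argument.
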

\begin{proof}
Note that $\omega_{\TM} = \xi^*\omega_M(\sum E_i) = \xi_+^*\omega_{M^+}(2\sum E_i)$.
Hence $\xi_+^*\omega_{M^+} = \xi^*\omega_M(-\sum E_i)$. Hence
$$
\xi_+^*\omega_{M^+|\Tsi^-_i} \cong
\xi^*\omega_{M|\Tsi^-_i} \otimes \CO(-E_i)_{|\Tsi^-_i} \cong
\CO_{\Tsi^-_i}(-h) \otimes \CO_{\Tsi^-_i}(-l) \cong
\CO_{\Tsi^-_i}(-h-l).
$$
But $\xi_+$ is an isomorphism on $\Tsi^-_i$, hence the claim.
\end{proof}

It turns out that $M^+$ has a very simple structure ---  it is a $\PP^1$-fibration over
a small resolution of $X$.


\begin{proposition}\label{mp}
The map $M^+ \to X$ factors through a composition
$\xymatrix@1{M^+ \ar[r]^{\mu_+} & X^+ \ar[r]^{\sigma_+} & X}$,
where the map $\mu_+:M^+ \to X^+$ is a $\PP^1$-fibration and $\sigma^+:X^+ \to X$
is a small resolution of singularities. The restriction of the map
$\mu_+$ to the fiber $M^+_{x_i} = \Tsi_i^-$ coincides with the projection
$\Tsi_i^- \to \PP^1$. The curve $C_i = \mu_+(\Tsi_i^-) \cong \PP^1$
is the exceptional locus of $X^+$ over $x_i \in X$.
\end{proposition}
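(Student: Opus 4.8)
The plan is to construct the factorization $M^+\exto{\mu_+}X^+\exto{\sigma_+}X$ locally over $X$ and then glue; since $M^+$ is only known to be a Moishezon variety, one should not expect $\mu_+$ to be cut out by a global line bundle, so the argument is genuinely local over the base. Over the smooth locus $X_{sm}$ there is nothing to do: by the preceding lemma $M^+\to X$ coincides over $X_{sm}$ with the conic bundle $\mu:M\to X$, which is flat with irreducible fibres (smooth conics away from the branch locus $f^{-1}(D_1)$, nonreduced conics over it); there we take $X^+=X_{sm}$ and $\mu_+=\mu$.

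The whole content is therefore the behaviour near the nodes $x_1,\dots,x_N$, which I would settle by an explicit local (analytic or formal) computation. First, pin down $X$ near $x_i$: since $y_i$ is an ordinary double point of $D_1$, the analysis of $D_2$ in \S\ref{s-pl} together with Remark~\ref{kiso} shows that the tangent cone of $D_1$ at $y_i$ is a nondegenerate quadric cone, so $X$ --- the double cover branched in $D_1$ --- has an ordinary double point at $x_i$, analytically $\{uv=wz\}$; the small resolution $\sigma_+:X^+\to X$ attached to the plane $\Sigma_i^+$ is the blow-up of one of its two rulings, with exceptional locus over $x_i$ a smooth rational curve $C_i$. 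Next, using a local trivialisation of $\CV$, the normal bundle computation $\CN_{\Sigma_i^\pm/M}\cong\CO(-1)^{\oplus2}$ of \S\ref{s-pl}, and the transverse intersection $\Sigma_i^+\cap\Sigma_i^-=\{P_i\}$, I would write the fourfold $M$ explicitly near $\Sigma_i^+\cup\Sigma_i^-$, carry out the blow-up $\xi$ and the blow-down $\xi_+$ of \S\ref{s-fl}, and obtain a local model of $M^+$ around $\Tsi_i^-$. In this model one reads off a morphism onto $X^+$ which on the fibre $\Tsi_i^-\cong F_1$ over $x_i$ is the canonical projection $F_1\to\PP^1$ onto $C_i$, whose remaining fibres are the rational curves of the conic bundle over $X_{sm}$, and which is in particular equidimensional.

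These two descriptions glue, because on overlaps they both equal $\mu$ and the only discrepancy --- the surfaces $\Tsi_i^-$ --- lies over the isolated points $x_i$; this yields a global morphism $\mu_+:M^+\to X^+$ with $\sigma_+\circ\mu_+$ the structure map $M^+\to X$. By construction $\sigma_+$ is an isomorphism over $X_{sm}$ and contracts each $C_i\cong\PP^1$ to $x_i$, hence is a small resolution; $\mu_+$ is equidimensional with rational fibres, i.e.\ a $\PP^1$-fibration; and $C_i=\mu_+(\Tsi_i^-)$ is the exceptional locus of $\sigma_+$ over $x_i$. One should also note that this $X^+$ is the small resolution already fixed before the statement, since both are determined by the choice of the plane $\Sigma_i^+$ at $y_i$.

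I expect the main obstacle to be exactly this local analysis near $x_i$: one must control simultaneously the analytic type of the node of $X$, the local geometry of $M$ around the two planes and of its flip, and the way the rational fibration of $\Tsi_i^-$ propagates to neighbouring fibres, so as to produce the morphism $\mu_+$ and not merely the set-theoretic equality $M^+_{x_i}=\Tsi_i^-$. A cleaner but more formal alternative is to observe that $M^+$ is smooth and that, relatively over $X$, $K_{M^+}$ is negative on the class $[h]-[l]$ of a fibre of $\Tsi_i^-\to\PP^1$ and positive on $L_i$, so $\RR_{\ge 0}([h]-[l])$ is a $K_{M^+}$-negative extremal ray of $\overline{NE}(M^+/X)$ whose contraction is $\mu_+$; the cost is that one must work in the Moishezon category, where the contraction theorem is delicate, and separately verify that $X^+$ is smooth along $C_i$, which again reduces to the same local model.
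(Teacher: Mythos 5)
Your proposal actually sketches two routes, and the one you present as the ``cleaner but more formal alternative'' is essentially the route the paper takes; your primary route (explicit analytic local models near the nodes) is genuinely different.

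The paper does not write down local equations for $M$ near $\Sigma_i^+\cup\Sigma_i^-$ at all. Instead it runs relative MMP for $M^+$ over $X$ (citing Nakayama for the Moishezon setting), exactly as in your alternative. Over $X_{sm}$ the relative Picard rank is $1$ and the relative anticanonical class is ample, so the first and last step of MMP is the structure map itself. Near a node $x_i$ one passes to an analytic neighborhood $U$, observes that the relative effective cone of $M^+_U$ is generated by the two curve classes in $\Tsi_i^-\cong F_1$, namely $l$ (the $(-1)$-curve $L_i$) and $h-l$ (the fiber of $F_1\to\PP^1$), and uses Lemma~\ref{ommp} ($\omega_{M^+}|_{\Tsi_i^-}\cong\CO(-h-l)$) to see that $h-l$ spans the $K$-negative ray, as you say. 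The two points where the paper's argument diverges most sharply from your local-model plan are precisely the points you flag as the main costs. First, rather than a local smoothness computation, the paper rules out the alternatives for the MMP step by citing Kawamata: a four-dimensional flip has center isomorphic to $\PP^2$, while the only compact surface in $M^+_U$ is $F_1$, and a divisorial contraction over $U\setminus\{x\}$ would contradict the $\PP^1$-fibration already established there, so the contraction is a Mori fiber space (conic bundle). Second, smoothness of $X^+$ does not require your local analytic model of the node $\{uv=wz\}$ and its small resolution; the paper obtains it directly from Tetsuya's theorem on extremal contractions of conic-bundle type. Finally, $\mu_+$ is promoted from conic bundle to $\PP^1$-fibration by the discriminant-divisor argument, since the degeneration locus would be a divisor but can only lie in the union of the $C_i$, which is one-dimensional.

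Your primary route --- writing $M$, $\TM$, $M^+$, and $X^+$ explicitly in analytic coordinates near each $x_i$ and reading off $\mu_+$ from the model --- is sound in principle and you have correctly identified all the inputs (the ODP of $D_1$ gives an ODP of $X$, $\CN_{\Sigma_i^\pm/M}\cong\CO(-1)^{\oplus 2}$, transversality at $P_i$, the Hirzebruch projection on $\Tsi_i^-$). Its advantage is that it is elementary and avoids the subtleties of MMP for Moishezon varieties. Its cost is exactly what you anticipate: one must carry the model through the blow-up $\xi$ and the (analytic) blow-down $\xi_+$ and verify that the ruling of $\Tsi_i^-$ really extends to a morphism to the small resolution, which amounts to redoing by hand the extremal contraction that Nakayama, Kawamata, and Tetsuya package for you. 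Either route proves the statement; the paper's is shorter given the cited machinery.
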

\begin{proof}
We apply to $M^+$ relative Minimal Model Program over $X$, see~\cite{Na}.
Since the relative MMP commutes with the base change, let us first look at
$M^+ \setminus \bigsqcup \Tsi_i^-$ which is the preimage of $X_{sm}$.
The map $M^+ \setminus \bigsqcup \Tsi_i^- \to X_{sm}$ is a $\PP^1$-fibration,
so its relative Picard group is $\ZZ$, and the relative canonical class is ample,
hence the first (and the last) step of the MMP for $M^+ \setminus \bigsqcup \Tsi_i^-$
is the contraction $M^+ \setminus \bigsqcup \Tsi_i^- \to X_{sm}$.

Now consider what happens over an analytic neighborhood of singular points.
Let $x = x_i$ be one of singular points. Consider an analytic neighborhood $U$ of $x$ in $X$
and its preimage $M^+_U \subset M^+$. Then the relative (over $U$) effective cone of $M^+_U$
is generated by curves in the special fiber $M^+_x = \Tsi^-$, that is by the $(-1)$-curve $L$
and by the fiber of the projection $\Tsi^- \to \PP^1$. By Lemma~\ref{ommp} the canonical class
$K_{M^+/X}$ restricts to $\Tsi^-$ as $-h-l$, hence $L$ is $K$-positive, while the fiber is $K$-negative.
Hence the first step in MMP is the contraction of the ray generated by the fiber
of the projection $\Tsi^- \to \PP^1$.
By MMP this contraction should be either
\begin{enumerate}
\item a flip, or
\item a divisorial contraction, or
\item a conic bundle.
\end{enumerate}
By a result of Kawamata~\cite{Ka} the case of a flip is impossible, since the center of a flip
in dimension 4 is always a $\PP^2$, while in our case the only compact surface in $M^+_U$ is $M^+_x = \Tsi^-$
which is a Hirzebruch surface $F_1$.
Similarly, a divisorial contraction is impossible, since then the first step of MMP on $M^+_U \setminus \Tsi_i^-$
would also be a divisorial contraction, while as we have shown above it is a $\PP^1$-fibration.

Thus the first step of MMP for $M^+_U$ is a conic bundle $M^+_U \to X^+_U$.
Once again, over $U \setminus \{x\}$ this conic bundle should coincide with the $\PP^1$-fibration
$M^+ \setminus \bigsqcup \Tsi_i^- \to X_{sm}$, hence gluing all these conic bundles
for all singular points $x_i$, we obtain a global conic bundle
structure on $M^+$, that is a global map $\mu_+:M^+ \to X^+$ for some Moishezon variety $X^+$.
Now we apply~\cite{Te} and conclude that $X^+$ is necessarily smooth, hence $X^+$ is a resolution of singularities of~$X$.
Further, the restriction of the map $\mu_+$ to $\Tsi^-_i$ is a conic bundle which contracts all
the fibers of the projection $\Tsi_i^- \to \PP^1$, hence the fiber of $X_+$ over $x_i$ is the image $C_i$ of $\Tsi_i^-$.
Since $X^+$ is smooth and the fiber of $X^+$ over $x_i$ is $C_i \cong \PP^1$, the map $\sigma_+:X^+ \to X$ is a small
resolution of singularities. So, it remains to check that $\mu_+:M^+ \to X^+$ is a $\PP^1$-fibration.

Since we already know that $\mu_+$ is a conic bundle, we should check that its degeneration locus is empty.
But the degeneration locus of a conic bundle is a divisor, while $M^+ \to X^+$ is nondegenerate
over the complement $X_{sm} = X^+ \setminus (\sqcup C_i)$ of a finite number of curves,
hence the degeneration locus is empty.
\end{proof}

Denoting $f_+ = f\circ\sigma_+$, $\rho_+ = f_+\circ\mu_+$, we obtain a commutative diagram
\begin{equation}\label{dia}
\vcenter{\xymatrix{
& \TM \ar[dl]_\xi \ar[dr]^{\xi_+} \\
M \ar[dd]_\mu  \ar@{-->}[rr]^{\text{flip}} \ar[dr]^\rho && M^+ \ar[dd]^{\mu_+} \ar[dl]_{\rho_+} \\
& Y \\
X \ar[ur]_f && X^+ \ar[ll]^{\sigma_+}  \ar[ul]^{f_+}
}}
\end{equation}

Since the map $\mu_+:\D^b(M^+) \to \D^b(X^+)$ is a $\PP^1$-fibration,
the functor $\mu_+^*:\D^b(X^+) \to \D^b(M^+)$ is fully faithful.
Composing with the functor given by the flip we obtain

\begin{corollary}
The functor $\xi_*\xi_+^*\mu_+^*:\D^b(X^+) \to \D^b(M)$ is fully faithful.
\end{corollary}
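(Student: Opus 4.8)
The plan is to exhibit the functor $\xi_*\xi_+^*\mu_+^*$ as a composition of two fully faithful functors. The outer composite $\xi_*\xi_+^*\colon\D^b(M^+)\to\D^b(M)$ is fully faithful by the Bondal--Orlov proposition quoted above --- indeed it is the embedding of the first component of the semiorthogonal decomposition $\D^b(M)=\langle\xi_*\xi_+^*\D^b(M^+),\{\CO_{\Sigma_i^+}\}_{i=1}^{N}\rangle$. Hence it suffices to prove that $\mu_+^*\colon\D^b(X^+)\to\D^b(M^+)$ is fully faithful, after which $\xi_*\xi_+^*\mu_+^*=(\xi_*\xi_+^*)\circ\mu_+^*$ is fully faithful as a composition of such functors.

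For the remaining point, recall that $\mu_+\colon M^+\to X^+$ is a $\PP^1$-fibration by Proposition~\ref{mp}; in particular it is flat and proper, so $\mu_+^*$ is the left adjoint of $R\mu_{+*}$, and $\mu_+^*$ is fully faithful precisely when the adjunction unit $\id_{\D^b(X^+)}\to R\mu_{+*}\mu_+^*$ is an isomorphism. By the projection formula $R\mu_{+*}\mu_+^*(-)\cong(-)\lotimes R\mu_{+*}\CO_{M^+}$, so it is enough to check $R\mu_{+*}\CO_{M^+}\cong\CO_{X^+}$. This is an \'etale-local (equivalently analytic-local) statement on $X^+$, where $\mu_+$ becomes an honest $\PP^1$-bundle, and there it follows from $H^0(\PP^1,\CO_{\PP^1})=\kk$ together with $H^1(\PP^1,\CO_{\PP^1})=0$. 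Coherence and boundedness of $\mu_+^*\CF$ for $\CF\in\D^b(X^+)$ are automatic because $\mu_+$ is flat of finite relative dimension.

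I do not expect a genuine obstacle here: all of the substance sits in the two inputs already established, namely the Bondal--Orlov decomposition attached to the flip and the $\PP^1$-fibration structure of Proposition~\ref{mp}. The one spot meriting a word of care is that $\mu_+\colon M^+\to X^+$ was produced as a conic bundle with empty degeneration locus, so a priori it could carry a nontrivial relative Brauer class; but the computation $R\mu_{+*}\CO_{M^+}\cong\CO_{X^+}$ is insensitive to this, being local in the \'etale topology, so the argument goes through unchanged.
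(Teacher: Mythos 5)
Your proposal is correct and follows the paper's own route exactly: factor the functor through $\D^b(M^+)$, invoke the Bondal--Orlov decomposition attached to the flip for $\xi_*\xi_+^*$, and use the $\PP^1$-fibration structure of Proposition~\ref{mp} to get full faithfulness of $\mu_+^*$. The only difference is that you spell out the standard $R\mu_{+*}\CO_{M^+}\cong\CO_{X^+}$ argument, which the paper treats as known.
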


Thus we have constructed all the required components in $\D^b(M)$.
It remains to check that they generate the whole category.
This is done in the next section.

\section{Derived category of $M^+$}\label{s-mp}

As it was shown in the previous section, $M^+$ is a $\PP^1$-bundle over $X^+$.
Locally in the \`etale topology this can be represented as a projectivization
of a rank 2 vector bundle. In general, these local bundles do not glue into
a global vector bundle, however their local endomorphism algebras glue and
give a sheaf of Azumaya algebras on $X^+$. We denote this sheaf by $\CB^+$,
it is defined by the $\PP^1$-fibtation up to a Morita equivalence.
The following result of Bernardara~\cite{Be} describes the derived category of $M^+$.

\begin{proposition}[\cite{Be}]\label{dbmp}
There is a semiorthogonal decomposition
$$
\D^b(M^+) = \langle \D^b(X^+), \D^b(X^+,\CB^+) \rangle.
$$
\end{proposition}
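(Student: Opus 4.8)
The plan is to reduce the statement to the known structure of derived categories of Brauer–Severi schemes. Since $\mu_+:M^+\to X^+$ is a $\PP^1$-fibration (Proposition~\ref{mp}), \'etale-locally on $X^+$ it is the projectivization of a rank $2$ bundle; the obstruction to a global such presentation is a class $\beta\in\mathrm{Br}(X^+)$, and the associated sheaf of Azumaya algebras $\CB^+$ is, up to Morita equivalence, the endomorphism algebra of any local trivializing bundle. The first step is to recall the general fact, due to Bernardara~\cite{Be}, that for a Brauer–Severi variety $p:P\to S$ of relative dimension $n$ over a smooth base $S$ one has a semiorthogonal decomposition $\D^b(P)=\langle\D^b(S),\D^b(S,\CB),\dots,\D^b(S,\CB^{\otimes n})\rangle$, where $\CB$ is the corresponding Azumaya algebra. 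In our situation $n=1$, so this immediately yields $\D^b(M^+)=\langle\D^b(X^+),\D^b(X^+,\CB^+)\rangle$, and there is essentially nothing left to prove beyond verifying the hypotheses.

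Concretely, the steps I would carry out are the following. First, check that $X^+$ is smooth (already established in Proposition~\ref{mp}) and that $M^+\to X^+$ is indeed a Brauer–Severi scheme of relative dimension $1$, i.e.\ a conic bundle with empty degeneration locus — this is precisely the last assertion of Proposition~\ref{mp}. Second, introduce the relative $\CO(1)$ (twisted if $\beta\ne 0$) and the push-forward sheaf, which \'etale-locally is the trivializing rank $2$ bundle and globally is a $\CB^+$-module; its endomorphisms recover $\CB^+$. Third, produce the two semiorthogonal pieces: $\mu_+^*:\D^b(X^+)\to\D^b(M^+)$ is fully faithful because $R\mu_{+*}\CO_{M^+}=\CO_{X^+}$ (a $\PP^1$-fibration), and $\CF\mapsto \CE\otimes_{\CB^+}\mu_+^*\CF$ (with $\CE$ the twisted tautological rank $2$ sheaf) embeds $\D^b(X^+,\CB^+)$, by the same adjunction/projection-formula computation used in the proof of Theorem~\ref{phi}. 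Fourth, check semiorthogonality and generation fiberwise: over a point the claim is the classical exceptional decomposition $\D^b(\PP^1)=\langle\CO,\CO(1)\rangle$, and one globalizes by a standard base-change/devissage argument, or simply cites~\cite{Be} for the full statement.

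The only genuine subtlety — and the single point I would be most careful about — is the twist. When $\beta\ne 0$ the relative hyperplane bundle $\CO_{M^+/X^+}(1)$ does not exist as an honest line bundle on $M^+$, only as a $\mu_+^*\alpha$-twisted sheaf for a suitable lift $\alpha$ of $\beta$, and correspondingly the second component must be $\D^b(X^+,\CB^+)$ rather than a second copy of $\D^b(X^+)$; getting the twisting conventions to match is where care is needed. Since all of this is exactly the content of Bernardara's theorem, the cleanest route is to verify that $\mu_+:M^+\to X^+$ satisfies its hypotheses — smooth base, Brauer–Severi fibration of relative dimension $1$ — and then invoke~\cite{Be} directly, which is what the statement does.
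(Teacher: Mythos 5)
Your proposal is correct and matches the paper exactly: the paper states this proposition as a citation of Bernardara's theorem, with the needed hypotheses (that $\mu_+:M^+\to X^+$ is a $\PP^1$-fibration over the smooth variety $X^+$, with $\CB^+$ the associated sheaf of Azumaya algebras up to Morita equivalence) already secured in Proposition~\ref{mp} and the paragraph preceding the statement. Your sketch of the internal mechanics of Bernardara's proof is accurate but goes beyond what the paper records, since no independent proof is given there.
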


So, to prove Theorem~\ref{mt} it remains to check that the functor $\xi_*\xi_+^*$
takes $\D^b(X^+,\CB^+)$ to $\Phi(\D^b(Y,\CB_0))$. For this we construct analogues
of the bundles $\CS_k$ on $M^+$. Recall that $E_i \cong \Sigma_i^+ \times L_i \cong \PP^2\times\PP^1$.

\begin{lemma}
There are vector bundles $\CR_k$ of rank $2$ on $M^+$ such that there is a short exact sequence
\begin{equation}\label{sr}
0 \to \xi^*\CS_k \to \xi_+^*\CR_k \to \oplus \CO_{E_i}(0,-1) \to 0.
\end{equation}
\end{lemma}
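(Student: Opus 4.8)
The plan is to produce $\CR_k$ on $M^+$ by elementary modification of the pullback $\xi^*\CS_k$ along the exceptional divisors $E_i$, and then to check that the resulting sheaf descends along $\xi_+$. First I would restrict $\CS_k$ to the plane $\Sigma_i^+$: by Lemma~\ref{sks} we have $(\CS_k)_{|\Sigma_i^+} \cong \CO_{\Sigma_i^+} \oplus \CO_{\Sigma_i^+}(1)$. Pulling back to $E_i = \Sigma_i^+ \times L_i \cong \PP^2 \times \PP^1$ under $\xi$ (which restricted to $E_i$ is the first projection), we get $(\xi^*\CS_k)_{|E_i} \cong \CO_{E_i} \oplus \CO_{E_i}(1,0)$, where I write $\CO_{E_i}(a,b)$ for the bidegree $(a,b)$ line bundle. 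The idea is to perform a modification that turns the summand $\CO_{E_i}(1,0)$ into $\CO_{E_i}(1,-1)$ (or, symmetrically, leaves a twist by $\CO_{E_i}(0,-1)$ as a cokernel), so that the part of the bundle surviving on $E_i$ becomes constant along the $L_i$-direction — which is exactly the fiber direction of $\xi_+ : E_i \to L_i$, the direction that must be collapsed. Concretely I would define $\CR_k$ via the elementary modification along $E = \sqcup E_i$ determined by the surjection $(\xi^*\CS_k)_{|E} \to \oplus \CO_{E_i}(1,0) \to \oplus \CO_{E_i}(0,-1)(F)$, where $F$ is the divisor class making the bookkeeping work; i.e. set $\CR_k$ to be the kernel/modification fitting into $0 \to \xi^*\CS_k \to \CR_k \to \oplus \CO_{E_i}(0,-1) \to 0$ on $\TM$, and then prove $\CR_k = \xi_+^*(\text{something on } M^+)$.

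The key steps in order: (1) compute $(\xi^*\CS_k)_{|E_i}$ as above using Lemma~\ref{sks} and the identification $E_i \cong \PP^2 \times \PP^1$ with $\xi_{|E_i}$ the projection to $\PP^2 = \Sigma_i^+$; (2) single out the quotient map $(\xi^*\CS_k)_{|E_i} \to \CO_{E_i}(0,-1)$ — this requires first projecting $\CO_{E_i}\oplus\CO_{E_i}(1,0)$ onto its $\CO_{E_i}(1,0)$ summand and then twisting down by $\CO_{E_i}(1,1)$, using that $\CO_{E_i}(-E_i)_{|E_i} = \CO_{E_i}(-1,-1)$ (the normal bundle of $E_i$ in $\TM$ is $\CO(-1,-1)$, as recorded in Section~\ref{s-fl}); (3) form the elementary modification $\CR_k$ on $\TM$ sitting in the exact sequence~\eqref{sr}, and verify $\CR_k$ is locally free of rank $2$ (elementary modifications of a rank $2$ bundle along a smooth divisor by a line bundle supported there stay locally free); (4) show $\CR_k$ is constant along the fibers of $\xi_+$, equivalently that $\CR_k|_{E_i}$ is a pullback from $L_i$ — one checks $\CR_k|_{E_i}$ fits in $0 \to \CO_{E_i}(0,-1) \to \CR_k|_{E_i} \to \CO_{E_i}(0,-1) \to 0$ or $\cong \CO_{E_i} \oplus \CO_{E_i}(0,-1)$ after the modification, hence has bidegree $(0,*)$ summands, so it descends; (5) invoke descent along the smooth contraction $\xi_+$ (e.g. by the projection-formula/flatness criterion, or directly since $\xi_+$ has connected fibers $\PP^1$ along which $\CR_k$ is a sum of $\CO$'s up to a fixed twist) to produce the sought vector bundle on $M^+$, which I again denote $\CR_k$ by abuse.

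The main obstacle I expect is step (4)–(5): pinning down the precise twist in the modification so that $\CR_k|_{E_i}$ really is trivial along the fiber direction of $\xi_+$ (the $\PP^2$-factor), and then justifying descent cleanly. If the naive modification gives $\CR_k|_{E_i} \cong \CO_{E_i}(1,0) \oplus \CO_{E_i}(-1,-1)$ or some other bidegree with a nonzero first coordinate, it will not descend, so the twist has to be chosen exactly right; getting $(1,0) \rightsquigarrow (1,-1)$ is forced by the requirement that the quotient in~\eqref{sr} be $\CO_{E_i}(0,-1)$ and that $\CR_k$ be a pullback. A clean way to organize this is to note $\xi_+^* : \Coh(M^+) \to \Coh(\TM)$ is fully faithful on its image and identify that image with sheaves whose restriction to each $E_i$ is a pullback from $L_i$; then one simply checks that the modification $\CR_k$ produced in step (3) lies in this subcategory by the bidegree computation, and the existence of a bundle on $M^+$ with the stated property follows. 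Once $\CR_k$ is constructed on $M^+$, the sequence~\eqref{sr} on $\TM$ is tautological from the construction.
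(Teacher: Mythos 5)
Your overall plan coincides with the paper's: perform an elementary modification of $\xi^*\CS_k$ along $\bigsqcup E_i$ so that its restriction to each $E_i$ becomes a pullback from $L_i$, then descend along $\xi_+$; and your bidegree bookkeeping, in particular $\CR_k|_{E_i}\cong\CO_{E_i}\oplus\CO_{E_i}(0,-1)$, is right. However, there is a genuine problem in step~(2): there is no nonzero map of sheaves $(\xi^*\CS_k)_{|E_i}\to\CO_{E_i}(0,-1)$, since
$\Hom\bigl(\CO_{E_i}\oplus\CO_{E_i}(1,0),\CO_{E_i}(0,-1)\bigr)=H^0(\CO_{E_i}(0,-1))\oplus H^0(\CO_{E_i}(-1,-1))=0$.
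The step ``project onto $\CO_{E_i}(1,0)$ and then twist down by $\CO_{E_i}(1,1)$'' relabels a line bundle; it does not produce a surjection. More structurally, the sequence~\eqref{sr} you want realizes $\xi^*\CS_k$ as a \emph{sub}sheaf of a locally free $\CR_k$, i.e.\ an ``upward'' elementary modification, and such a modification is never the kernel of a surjection of $\xi^*\CS_k$ onto a torsion sheaf on the divisor.

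The paper sidesteps this exactly by dualizing first. One has $(\xi^*\CS_k^\vee)_{|E_i}\cong\CO_{E_i}\oplus\CO_{E_i}(-1,0)$, and there \emph{is} a canonical surjection onto the $\CO_{E_i}(-1,0)$ summand; take the kernel $F$ (a ``downward'' modification), check $F_{|E_i}\cong\CO_{E_i}\oplus\CO_{E_i}(0,1)$ using $\CN_{E_i/\TM}\cong\CO_{E_i}(-1,-1)$, conclude $F=\xi_+^*\CR_k^\vee$, and then dualize the defining sequence; the $\CExt^1$-term $\RCHom(\CO_{E_i}(-1,0),\CO_{\TM})[1]\cong\CO_{E_i}(1,0)\otimes\CN_{E_i/\TM}\cong\CO_{E_i}(0,-1)$ is precisely your quotient, and this recovers~\eqref{sr}. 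Alternatively, you could phrase step~(2) correctly by observing that an upward modification by $\CO_{E_i}(0,-1)$ is governed by an \emph{injection} $\CO_{E_i}(0,-1)(-E_i)_{|E_i}=\CO_{E_i}(1,0)\hookrightarrow(\xi^*\CS_k)_{|E_i}$, which does exist (inclusion of the second summand), and then run the standard pushout argument; this is equivalent to the paper's dualization but messier to justify that the middle term is locally free. So the proposal is essentially sound in conception, but step~(2) needs to be replaced by one of these two fixes before the construction exists.
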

\begin{proof}
By Lemma~\ref{sks} we have an isomorphism $(\xi^*\CS_k^\vee)_{|E_i} \cong \CO_{E_i} \oplus \CO_{E_i}(-1,0)$.
Consider the composition $\xi^*\CS_k^\vee \to (\xi^*\CS_k^\vee)_{|E_i} \to \CO_{E_i}(-1,0)$, where the second map
is the unique projection. This map is clearly surjective. Denote the kernel of the sum of these maps over $i$ by $F$,
so that we have an exact triple
$$
0 \to F \to \xi^*\CS_k^\vee \to \oplus \CO_{E_i}(-1,0) \to 0.
$$
Let us check that $F$ is a pullback of a vector bundle from $M^+$.
Since $\xi_+:\TM \to M^+$ is a smooth blowup, it suffices to check that
$F_{|E_i}$ is a pullback of a vector bundle from $L_i$. Let us restrict
the above exact sequence to $E_i$. Since $\CN_{E_i/\TM} \cong \CO_{E_i}(-1,-1)$ we obtain
an exact sequence
$$
0 \to \CO_{E_i}(0,1) \to F_{|E_i} \to \CO_{E_i} \oplus \CO_{E_i}(-1,0) \to \CO_{E_i}(-1,0) \to 0.
$$
The last map is the projection to the second summand, hence we have an exact triple
$$
0 \to \CO_{E_i}(0,1) \to F_{|E_i} \to \CO_{E_i} \to 0.
$$
Since $\Ext^1(\CO_{E_i},\CO_{E_i}(0,1)) \cong H^1(E_i,\CO_{E_i}(0,1)) = 0$,
we see that $F_{|E_i} \cong \CO_{E_i} \oplus \CO_{E_i}(0,1)$.
So, $F_{|E_i}$ is a pullback of $\CO_{L_i} \oplus \CO_{L_i}(1)$,
hence $F$ is a pullback of a vector bundle on $M^+$
which restricts to $L_i$ as $\CO_{L_i} \oplus \CO_{L_i}(1)$.
Now we define $\CR_k$ as the dual of this vector bundle.
So, by definition we have the following exact sequence
\begin{equation}\label{rsd}
0 \to \xi_+^*\CR_k^\vee \to \xi^*\CS_k^\vee \to \oplus \CO_{E_i}(-1,0) \to 0.
\end{equation}
Dualizing this sequence and taking into account that
$$
\RCHom(\CO_{E_i}(-1,0),\CO_\TM) \cong \CO_{E_i}(1,0)\otimes\CN_{E_i/\TM}[-1] \cong \CO_{E_i}(0,-1)[-1]
$$
we obtain~\eqref{sr}.
\end{proof}

The bundles $\CR_k$ enjoy a lot of interesting properties.

\begin{lemma}\label{rhord}
We have $(\rho_+)_*\CR_k^\vee = 0$.
\end{lemma}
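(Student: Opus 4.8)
The plan is to compute $(\rho_+)_*\CR_k^\vee$ by pulling everything back to the blowup $\TM$, where we have the explicit description of $\CR_k^\vee$ from the exact sequence~\eqref{rsd}, and then pushing forward. Recall that $\rho_+\circ\xi_+ = \rho\circ\xi$ (this is the commutativity of~\eqref{dia}), and since $\xi_+$ is a blowdown with $\xi_{+*}\CO_\TM = \CO_{M^+}$ and $R^{>0}\xi_{+*}\CO_\TM = 0$, we have $(\rho_+)_*\CR_k^\vee \cong (\rho_+)_*(\xi_{+*}\xi_+^*\CR_k^\vee) \cong (\rho\circ\xi)_*(\xi_+^*\CR_k^\vee)$. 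So it suffices to show $(\rho\circ\xi)_*(\xi_+^*\CR_k^\vee) = 0$, and for this I can use the triangle~\eqref{rsd}:
$$
0 \to \xi_+^*\CR_k^\vee \to \xi^*\CS_k^\vee \to \oplus_i \CO_{E_i}(-1,0) \to 0.
$$

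First I would handle the middle term. Since $\xi_*\xi^*\CS_k^\vee \cong \CS_k^\vee$ and $R^{>0}\xi_*\xi^*\CS_k^\vee = 0$ (again because $\xi$ is a smooth blowup and $\CS_k^\vee$ is a vector bundle on $M$), we get $(\rho\circ\xi)_*(\xi^*\CS_k^\vee) \cong \rho_*(\CS_k^\vee)$, which vanishes by Corollary~\ref{rhosd}. Next I would handle the last term. We have $E_i \cong \Sigma_i^+\times L_i$, the map $\xi$ contracts $E_i$ onto $\Sigma_i^+ = \Gr(2,W_i^+)$, and $\rho$ sends $\Sigma_i^+$ to the point $y_i$; so $\rho\circ\xi$ collapses all of $E_i$ to the point $y_i$. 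Along the fibers of $E_i \to \Sigma_i^+$, which are copies of $L_i \cong \PP^1$, the sheaf $\CO_{E_i}(-1,0)$ restricts to $\CO_{\PP^1}(0) = \CO_{\PP^1}$; but $\CO_{E_i}(-1,0)$ restricts to each $\Sigma_i^+ = \PP^2$ slice as $\CO_{\PP^2}(-1)$, which is acyclic. Hence $R(\rho\circ\xi)_*\CO_{E_i}(-1,0) = 0$: one computes the pushforward as, say, $R\rho_*\, R\xi_*$, and $R\xi_*\CO_{E_i}(-1,0)$ already vanishes because $\CO_{\PP^2}(-1)$ has no cohomology on the $\PP^2$-fibers of the blowdown $E_i \to \Sigma_i^+$. (Concretely: $\xi$ restricted to $E_i$ is the projection $\Sigma_i^+\times L_i \to \Sigma_i^+$, wait — it is the contraction onto $\Sigma_i^+$, which here is genuinely a projection, and $R^\bullet\pi_*\CO_{\PP^2}(-1) = 0$.)

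With the two outer terms of the triangle having vanishing pushforward, the long exact sequence of higher direct images forces $(\rho\circ\xi)_*(\xi_+^*\CR_k^\vee) = 0$, hence $(\rho_+)_*\CR_k^\vee = 0$, as claimed. The main thing to be careful about is bookkeeping with the blowdown $\xi_+$: one must check that $\xi_+^*\CR_k^\vee$ genuinely has vanishing derived pushforward along $\rho\circ\xi$ and not merely along one of the two factorizations, but since $\rho_+ = \rho\circ\xi\circ\xi_+^{-1}$ makes sense only after identifying $(\rho_+)_*$ with $(\rho\circ\xi)_*\xi_+^{-*}$ — and the cleanest route is the identity $(\rho_+)_* \cong (\rho\circ\xi)_*\circ L\xi_+^*$ applied to the bundle $\CR_k^\vee$, using $R\xi_{+*}L\xi_+^* \cong \id$. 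No step is genuinely hard; the only potential obstacle is making sure the restriction $\CO_{E_i}(-1,0)|_{\text{fiber of }E_i\to\Sigma_i^+}$ is acyclic with the correct convention, i.e. that the "$(-1,0)$" twist is the one pulled back from $\Sigma_i^+ = \PP^2$ and not from $L_i$, which is exactly what~\eqref{sr}–\eqref{rsd} and Lemma~\ref{sks} dictate.
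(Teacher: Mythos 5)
Your approach is the same as the paper's: pull back to $\TM$, apply the short exact sequence~\eqref{rsd}, and push forward along $\rho\circ\xi$, killing the middle term by Corollary~\ref{rhosd} and the term on the right because $\rho\circ\xi$ collapses $E_i$ to the point $y_i$. The structure of the argument is fine and the conclusion is right, but there is one factual error in the step where you dispose of $\CO_{E_i}(-1,0)$.

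You assert that $R\xi_*\CO_{E_i}(-1,0)$ already vanishes ``because $\CO_{\PP^2}(-1)$ has no cohomology on the $\PP^2$-fibers of the blowdown $E_i\to\Sigma_i^+$.'' But $\xi$ restricted to $E_i\cong\Sigma_i^+\times L_i$ is the projection to $\Sigma_i^+$, whose fibers are $L_i\cong\PP^1$, not $\PP^2$ (the map with $\PP^2$-fibers is $\xi_+|_{E_i}:E_i\to L_i$). On those $\PP^1$-fibers the sheaf $\CO_{E_i}(-1,0)$ restricts to $\CO_{\PP^1}$, so $R\xi_*\CO_{E_i}(-1,0)\cong\CO_{\Sigma_i^+}(-1)\neq 0$; it does not vanish under $R\xi_*$ alone. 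The vanishing occurs at the next stage: $\rho$ contracts $\Sigma_i^+\cong\PP^2$ to $y_i$ and $\CO_{\PP^2}(-1)$ is acyclic, so $R\rho_*\CO_{\Sigma_i^+}(-1)=0$. Equivalently, as the paper does, one can compute $R(\rho\circ\xi)_*\CO_{E_i}(-1,0)\cong H^\bullet(E_i,\CO_{E_i}(-1,0))\otimes\CO_{y_i}$ in one step and observe that $H^\bullet(\PP^2\times\PP^1,\CO(-1,0))=H^\bullet(\PP^2,\CO(-1))\otimes H^\bullet(\PP^1,\CO)=0$. With this correction your argument is the paper's argument.
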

\begin{proof}
Indeed, using commutativity of~\eqref{dia} we deduce
$$
(\rho_+)_*\CR_k^\vee =
(\rho_+)_*(\xi_+)_*\xi_+^*\CR_k^\vee =
\rho_*\xi_*\xi_+^*\CR_k^\vee.
$$
Applying the functor $\rho_*\xi_*$ to~\eqref{rsd} we obtain a triangle
$$
\rho_*\xi_*\xi_+^*\CR_k^\vee \to \rho_*\xi_*\xi^*\CS_k^\vee \to \oplus \rho_*\xi_*\CO_{E_i}(-1,0).
$$
The second term equals to $\rho_*\CS_k^\vee$ which is zero by Corollary~\ref{rhosd}.
Since $\rho\circ\xi$ contracts $E_i$ to the point $y_i$, the third term is
$\oplus H^\bullet(E_i,\CO_{E_i}(-1,0))\otimes\CO_{y_i}$,
so it is also zero. Hence $(\rho_+)_*\CR_k^\vee = 0$.
\end{proof}

\begin{proposition}
The bundle $\CR_k$ restricts to any fiber of $\mu_+:M^+ \to X^+$ as $\CO(1) \oplus \CO(1)$.
Moreover
\begin{equation}\label{rsi}
{\CR_k}_{|\Tsi^-_i} \cong \CO_{\Tsi^-_i}(h) \oplus \CO_{\Tsi^-_i}(l).
\end{equation}
\end{proposition}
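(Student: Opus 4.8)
The plan is to prove \eqref{rsi} first, by restricting the defining sequence of $\CR_k$ to the special fibre $\Tsi^-_i$, and then to obtain the statement about an arbitrary fibre of $\mu_+$ as a consequence. First I would restrict the sequence \eqref{rsd} to $\Tsi^-_i$. Since $\xi$ is the blow-up of $\bigsqcup\Sigma^+_j$ and $\Sigma^+_i$ meets $\Sigma^-_i$ transversally at the single point $P_i$, the proper transform $\Tsi^-_i$ is disjoint from $E_j$ for $j\neq i$ and meets $E_i$ transversally along $L_i$, which under $E_i\cong\Sigma^+_i\times L_i$ is the fibre $\{P_i\}\times L_i$; hence the sheaves $\CO_{E_j}(-1,0)$ restrict to $\Tsi^-_i$ as $\CO_{L_i}$ (for $j=i$) and $0$ (for $j\neq i$), with no higher $\Tor$. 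On the other hand $\xi|_{\Tsi^-_i}$ is the blow-down $\Tsi^-_i\to\Sigma^-_i$, so by Lemma~\ref{sks} $(\xi^*\CS_k^\vee)|_{\Tsi^-_i}\cong\CO_{\Tsi^-_i}\oplus\CO_{\Tsi^-_i}(-h)$, and $\xi_+$ restricts to an isomorphism on $\Tsi^-_i$. Restricting \eqref{rsd} thus yields a short exact sequence
\[
0\longrightarrow\CR_k^\vee|_{\Tsi^-_i}\longrightarrow\CO_{\Tsi^-_i}\oplus\CO_{\Tsi^-_i}(-h)\stackrel{\delta}{\longrightarrow}\CO_{L_i}\longrightarrow0 .
\]

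Next I would read off $\CR_k^\vee|_{\Tsi^-_i}$ from this sequence on the Hirzebruch surface $\Tsi^-_i\cong F_1$. Using $h\cdot l=0$ one has $\Hom(\CO_{\Tsi^-_i}\oplus\CO_{\Tsi^-_i}(-h),\CO_{L_i})\cong\kk^2$, and the automorphisms of $\CO_{\Tsi^-_i}\oplus\CO_{\Tsi^-_i}(-h)$ act on the surjection $\delta$ with exactly two orbits, distinguished by whether the component of $\delta$ on the $\CO_{\Tsi^-_i}$-summand vanishes. Computing the kernel in each case, $\CR_k^\vee|_{\Tsi^-_i}\cong\CO_{\Tsi^-_i}(-h)\oplus\CO_{\Tsi^-_i}(-l)$ when that component is nonzero and $\CR_k^\vee|_{\Tsi^-_i}\cong\CO_{\Tsi^-_i}\oplus\CO_{\Tsi^-_i}(-h-l)$ when it vanishes. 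Taking cohomology of the sequence and using $H^0(F_1,\CO(-h))=H^1(F_1,\CO)=H^1(F_1,\CO(-h))=0$ identifies $H^0(\Tsi^-_i,\CR_k^\vee|_{\Tsi^-_i})$ with the kernel of the scalar map $H^0(\CO_{\Tsi^-_i})\to H^0(\CO_{L_i})$ induced by that same component. Hence \eqref{rsi}, i.e.\ $\CR_k|_{\Tsi^-_i}\cong\CO_{\Tsi^-_i}(h)\oplus\CO_{\Tsi^-_i}(l)$, holds if and only if $H^0(\Tsi^-_i,\CR_k^\vee|_{\Tsi^-_i})=0$.

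This vanishing is the step I expect to be the main obstacle. The natural input is Lemma~\ref{rhord}, $(\rho_+)_*\CR_k^\vee=0$; the subtlety is that $\rho_+$ is not flat over $y_i$ — its fibre there is the $2$-dimensional $\Tsi^-_i$ — so cohomology-and-base-change is not directly available, and the unwanted possibility $\CR_k^\vee|_{\Tsi^-_i}\cong\CO_{\Tsi^-_i}\oplus\CO_{\Tsi^-_i}(-h-l)$, which carries a nonzero section, must be excluded with some care. The way I would do it is to factor $\rho_+=f_+\circ\mu_+$ through the flat $\PP^1$-bundle $\mu_+$ of Proposition~\ref{mp} and analyse $R(\mu_+)_*\CR_k^\vee$: in the unwanted case, since $\mu_+|_{\Tsi^-_i}:\Tsi^-_i\to C_i$ is the ruling of $F_1$ and $\omega_{\Tsi^-_i/C_i}\cong\CO_{\Tsi^-_i}(-h-l)$ by Lemma~\ref{ommp}, one gets $R(\mu_+|_{\Tsi^-_i})_*\big(\CR_k^\vee|_{\Tsi^-_i}\big)\cong\CO_{C_i}\oplus\CO_{C_i}[-1]$, and derived base change along the flat $\mu_+$ then forces a nonzero contribution of $R(\mu_+)_*\CR_k^\vee$ along $C_i$ which survives after pushing forward by $f_+$ (the latter over a neighbourhood of $y_i$ merely contracting $C_i$), contradicting Lemma~\ref{rhord}. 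Therefore $H^0(\Tsi^-_i,\CR_k^\vee|_{\Tsi^-_i})=0$ and \eqref{rsi} holds.

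Finally the statement about an arbitrary fibre of $\mu_+$ follows. Over $C_i$ the fibres are the rulings of $\Tsi^-_i\cong F_1$, of class $h-l$, on which $\CO_{\Tsi^-_i}(h)\oplus\CO_{\Tsi^-_i}(l)$ restricts to $\CO(1)\oplus\CO(1)$ by \eqref{rsi}. Over $X_{sm}$ the maps $\xi,\xi_+$ are isomorphisms, so $\CR_k=\CS_k$ there, and restricting \eqref{sodd} and \eqref{seven} to a fibre $C$ of $\mu_+$ — a conic, on which $\CO(g)$ has degree $2$ — identifies $\CS_k|_C$ with $\CO_C(1)^{\oplus2}$, the extension \eqref{seven} being non-split on $C$ because otherwise $\CS_k^\vee|_C$ would carry a nonzero section, contradicting $\rho_*\CS_k^\vee=0$ (Corollary~\ref{rhosd}) via base change over the flat locus $Y\setminus D_2$. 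This completes the proof.
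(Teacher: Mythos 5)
Your proposal follows the paper up to the dichotomy, and the shape of the exclusion argument differs from the paper's in an essential way, so let me compare the two carefully.

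You and the paper both restrict~\eqref{rsd} to $\Tsi^-_i$ to obtain the short exact sequence
$0 \to (\CR_k^\vee)_{|\Tsi^-_i} \to \CO_{\Tsi^-_i} \oplus \CO_{\Tsi^-_i}(-h) \to \CO_{L_i} \to 0$
and conclude that either~\eqref{rsi} holds or $(\CR_k^\vee)_{|\Tsi^-_i} \cong \CO_{\Tsi^-_i}\oplus\CO_{\Tsi^-_i}(-h-l)$. You also both rely on Lemma~\ref{rhord} to exclude the second case, but by different mechanisms. The paper's argument takes $\rho_+^*\CO_{y_i}$, applies Serre duality on the compact fourfold $M^+$ to deduce the vanishing of $\Ext^q(\CH^i(\rho_+^*\CO_{y_i}),\CR_k^\vee)$ for $q\notin\{2,3,4\}$ (since the $\CH^i$ are supported on the surface $\Tsi^-_i$), and then exhibits a surviving class in $\Ext^3$ coming from $H^1(\Tsi^-_i,\CO\oplus\CO(-h-l))\cong\kk$ via the spectral sequence. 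Your route instead works with $R(\mu_+)_*\CR_k^\vee$, uses derived base change along the flat $\PP^1$-bundle $\mu_+$ and Lemma~\ref{ommp} to get $R\pi_*\bigl(\CR_k^\vee|_{\Tsi^-_i}\bigr)\cong\CO_{C_i}\oplus\CO_{C_i}[-1]$, and then claims this ``survives after pushing forward by $f_+$.''

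That last clause is the gap, and it is not a cosmetic one. Knowing that $Li_{C_i}^*\,R(\mu_+)_*\CR_k^\vee\cong\CO_{C_i}\oplus\CO_{C_i}[-1]$ tells you $R(\mu_+)_*\CR_k^\vee$ is a nonzero complex supported set-theoretically on $C_i$ with restricted cohomology sheaves $\CO_{C_i}$ and $\CO_{C_i}$; it does not by itself tell you that $Rf_{+*}$ of this complex is nonzero. For example $Rf_{+*}\CO_{C_i}(-1)=0$, and more to the point the Leray spectral sequence $R^pf_{+*}R^q(\mu_+)_*\CR_k^\vee\Rightarrow R^{p+q}\rho_{+*}\CR_k^\vee$ has potentially cancelling entries: the $R^0f_{+*}R^1(\mu_+)_*$ contribution could a priori be hit by a $d_2$ from $R^2f_{+*}R^0(\mu_+)_*$, and controlling the scheme-theoretic structure of the $R^q(\mu_+)_*$ (they are only known to be supported on $C_i$ set-theoretically, and to have the prescribed derived restriction) requires a separate argument. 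So the step you flagged as ``the main obstacle'' is indeed where the proof currently stops being a proof; you need either the paper's Serre-duality computation, or an explicit analysis of the sheaves $R^q(\mu_+)_*\CR_k^\vee$ near $C_i$ showing that no cancellation occurs, before you can invoke Lemma~\ref{rhord}.

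A smaller point: for the statement about a general fibre you only address even $k$, via the non-splitness of~\eqref{seven} over a conic; for odd $k$ you would still need to identify $(\CV/\CU)|_C$ on a ruling of a smooth quadric. The paper sidesteps this by arguing uniformly: $(\rho_+)_*\CR_k^\vee=0$ together with flatness of $\rho_+$ over $X_{\mathrm{sm}}$ and cohomology-and-base-change forces $\CR_k^\vee$ to restrict as $\CO(-1)^{\oplus 2}$ on every fibre there (it is the unique acyclic rank-two bundle on $\PP^1$). Adopting that uniform argument would both close the odd-$k$ gap and streamline your write-up.
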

\begin{proof}
We are going to prove instead that $\CR_k^\vee$ restricts to all fibers as $\CO(-1)\oplus\CO(-1)$.
For fibers of $\mu^+:M^+\setminus \bigsqcup\Tsi^-_i \to X^+ \setminus \bigsqcup C_i$ this follows from Lemma~\ref{rhord}.
So it remains to investigate the restriction of $\CR_k^\vee$ to $\Tsi^-_i$. For this we restrict~\eqref{rsd}:
$$
0 \to (\CR_k^\vee)_{|\Tsi^-_i} \to \CO_{\Tsi^-_i} \oplus \CO_{\Tsi^-_i}(-h) \to \CO_{L_i} \to 0.
$$
It follows that either~\eqref{rsi} holds, or
$(\CR_k^\vee)_{|\Tsi^-_i} \cong \CO_{\Tsi^-_i}(-h-l) \oplus \CO_{\Tsi^-_i}$.
In the former case we are done since both $\CO_{\Tsi^-_i}(h)$ and $\CO_{\Tsi^-_i}(l)$
restrict as $\CO(1)$ to any fiber of $\Tsi^-_i$ over $C_i$.
Let us check that the case $(\CR_k^\vee)_{|\Tsi^-_i} \cong \CO_{\Tsi^-_i}(-h-l) \oplus \CO_{\Tsi^-_i}$ is impossible.

For this we note that by Lemma~\ref{rhord}
$$
0 = \Ext^\bullet(\CO_{y_i},(\rho_+)_*\CR_k^\vee) = \Ext^\bullet(\rho_+^*\CO_{y_i},\CR_k^\vee).
$$
On the other hand, the cohomology sheaves $\CH^l = \CH^l(\rho_+^*\CO_{y_i})$ are supported on $\Tsi^-_i$.
Moreover, $\CH^l = 0$ for $l > 0$ (since $\rho_+^*$ is right exact) and $\CH^0 \cong \CO_{\Tsi^-_i}$.
Consider the spectral sequence
$$
\Ext^q(\CH^p,\CR_k^\vee) \Rightarrow \Ext^{q-p}(\rho_+^*\CO_{y_i},\CR_k^\vee) = 0.
$$
Note that by Serre duality on $M^+$ we have
$$
\Ext^q(\CH^i,\CR_k^\vee) \cong
\Ext^{4-q}(\CR_k^\vee,\CH^i\otimes\omega_{M^+})^\vee \cong
H^{4-q}(M^+,\CH^i\otimes\CR_k\otimes\omega_{M^+})^\vee.
$$
The RHS vanishes for $q \not\in \{2,3,4\}$ since the sheaf
$\CH^i$ is supported on $\Tsi_i^-$ and $\dim\Tsi_i^- = 2$.
Hence the line $q = 3$ doesn't change in the spectral sequence.
But if $(\CR_k^\vee)_{|\Tsi^-_i} \cong \CO_{\Tsi^-_i}(-h-l) \oplus \CO_{\Tsi^-_i}$ then
\begin{multline*}
\Ext^3(\CH^0,\CR_k^\vee) \cong
H^1(M^+,\CO_{\Tsi^-_i}\otimes\CR_k\otimes\omega_{M^+})^\vee \cong \\ \cong
H^1(\Tsi^-_i,(\CO_{\Tsi^-_i}(h+l) \oplus \CO_{\Tsi^-_i})\otimes \CO_{\Tsi^-_i}(-h-l)) \cong \\ \cong
H^1(\Tsi^-_i,\CO_{\Tsi^-_i} \oplus \CO_{\Tsi^-_i}(-h-l)) \cong \kk
\end{multline*}
gives a nontrivial element in $\Ext^3(\rho_+^*\CO_{y_i},\CR_k^\vee)$, which is impossible.
\end{proof}

Now let us describe the Morita equivalence class of $\CB^+$ in terms of $\CR_k$.

\begin{proposition}
There is an Azumaya algebra $\CB'$ on $X^+$ which is Morita-equivalent to $\CB^+$ and such that we have
$\CEnd(\CR_0) \cong \mu_+^*\CB'$, an isomorphism of algebras.
\end{proposition}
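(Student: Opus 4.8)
The plan is to build the Azumaya algebra $\CB'$ directly as a pushforward of the endomorphism sheaf of $\CR_0$ along the $\PP^1$-fibration $\mu_+:M^+ \to X^+$, and then identify its Morita class with that of $\CB^+$. First I would set $\CB' := (\mu_+)_*\CEnd(\CR_0)$. Since $\mu_+$ is a $\PP^1$-fibration and, by the previous Proposition, $\CR_0$ restricts to every fiber of $\mu_+$ as $\CO(1)\oplus\CO(1)$, the endomorphism sheaf $\CEnd(\CR_0)$ restricts to each fiber as $\CEnd(\CO(1)\oplus\CO(1)) \cong \CO\otimes\mathrm{Mat}_2$, whose higher cohomology on $\PP^1$ vanishes and whose $H^0$ is the matrix algebra $\mathrm{Mat}_2(\kk)$. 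Hence by cohomology and base change $\CB'$ is locally free of rank $4$ on $X^+$, the formation of $(\mu_+)_*\CEnd(\CR_0)$ commutes with base change, and the natural adjunction morphism $\mu_+^*\CB' = \mu_+^*(\mu_+)_*\CEnd(\CR_0) \to \CEnd(\CR_0)$ is an isomorphism (it is so on every fiber by the fiberwise computation). This is a morphism of sheaves of algebras because $\CEnd(\CR_0)$ is a sheaf of algebras, pushforward is lax monoidal, and $\mu_+^*$ is monoidal; so $\CEnd(\CR_0) \cong \mu_+^*\CB'$ as algebras, giving the displayed isomorphism.

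Next I would check that $\CB'$ is an Azumaya algebra on $X^+$. Being locally free of rank $4 = 2^2$, it suffices to check it is étale-locally a matrix algebra. But $\mu_+:M^+ \to X^+$, being a $\PP^1$-fibration, is étale-locally (indeed Zariski-locally in a suitable sense, certainly étale-locally) the projectivization $\PP(\CW)$ of a rank $2$ bundle $\CW$; over such an open set $\CR_0$ is, fiberwise, $\CO(1)\oplus\CO(1)$, so $\CR_0 \cong \mu_+^*\CG \otimes \CO_{\PP(\CW)}(1)$ for some rank $2$ bundle $\CG$ downstairs (twisting by $\CO(1)$ trivializes the fiberwise behavior and the remaining data descends). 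Then $\CEnd(\CR_0) \cong \mu_+^*\CEnd(\CG)$, so $\CB' \cong \CEnd(\CG)$ is a matrix algebra locally. This shows $\CB'$ is Azumaya.

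Finally I would identify the Morita class. Recall $\CB^+$ was defined as the sheaf of Azumaya algebras glued from the local endomorphism algebras of the rank $2$ bundles $\CW$ trivializing $\mu_+$; that is, $\CB^+$ represents, up to Morita equivalence, exactly the Brauer class obstructing $M^+ \to X^+$ from being a global $\PP(\CW)$. The bundle $\CR_0$ restricts to each $\mu_+$-fiber as $\CO(1)^{\oplus 2}$, i.e.\ it is a "fiberwise-linear" family of the same type as $\CW^\vee$ (up to a line bundle twist which does not affect the Brauer class); concretely, on the local charts where $M^+ = \PP(\CW)$ one has $\CEnd(\CR_0) = \mu_+^*\CEnd(\CW)$ as computed above, and $\CEnd(\CW)$ is precisely the local model for $\CB^+$. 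Thus the two Azumaya algebras $\CB'$ and $\CB^+$ are built from the same transition data and define the same class in $\mathrm{Br}(X^+)$, hence are Morita-equivalent. The main obstacle I expect is the second step: verifying cleanly that over the étale charts trivializing $\mu_+$ the bundle $\CR_0$ really is of the form $\mu_+^*\CG \otimes \CO(1)$ — i.e.\ that the fiberwise splitting type $\CO(1)\oplus\CO(1)$ forces this global-over-the-chart shape — and making the comparison with the defining cocycle of $\CB^+$ precise; once $\mu_+^*\CB' \cong \CEnd(\CR_0)$ is known as algebras, everything else is formal.
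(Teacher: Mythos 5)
Your construction of $\CB'$ and the first two steps match the paper exactly: you set $\CB' := (\mu_+)_*\CEnd(\CR_0)$, use the fiberwise triviality of $\CEnd(\CR_0)$ together with cohomology-and-base-change to get $\mu_+^*\CB' \cong \CEnd(\CR_0)$ as algebras, and the \'etale-local computation $\CR_0 \cong \mu_U^*\CG\otimes\CO_{\PP(\CW)}(1)$, $\CEnd(\CR_0)\cong\mu_U^*\CEnd(\CG)$ showing $\CB'$ is Azumaya is precisely what the paper does.

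Where you diverge is the final Morita-equivalence step, and there is a slip there. Locally one has $\CEnd(\CR_0)\cong\mu_U^*\CEnd(\CG)$, not $\mu_U^*\CEnd(\CW)$ as you write in your ``concretely'' sentence; the downstairs bundle $\CG=(\mu_U)_*\bigl(\CR_0\otimes\CO(-1)\bigr)$ is in general unrelated to the bundle $\CW$ with $\PP(\CW)\cong M^+_U$, so $\CB'$ and $\CB^+$ are \emph{not} ``built from the same transition data.'' Your parenthetical ``up to a line bundle twist which does not affect the Brauer class'' is the right idea, but to make it a proof one has to track that the twisting cocycle of the local bundles $\{\CG\}$ is the negative of that of $\{\CW\}$ (because $\CR_0$ is an honest bundle, forcing the $\CG$-twist to cancel the $\CO(1)$-twist), and then invoke that the Brauer class of a $\PP^1$-fibration is $2$-torsion so that $\beta=-\beta$, whence $[\CB']=[\CB^+]\in\mathrm{Br}(X^+)$ and the algebras are Morita-equivalent. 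This is a valid route, but it needs those two ingredients spelled out; as written the cocycle comparison is incorrect. The paper avoids this bookkeeping altogether by exhibiting the global $\CB'$--$\CB^+$-bimodule $(\mu_+)_*\CR_0$ and checking \'etale-locally that it restricts to $\CG\otimes\CW^\vee$, which implements the Morita equivalence directly; that is cleaner and does not require knowing the torsion order of the Brauer class. I would recommend either adopting the bimodule argument or making the Brauer-class comparison precise along the lines indicated above.
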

\begin{proof}
Note that $\CEnd(\CR_0)$ restricts trivially to all fibers of the $\PP^1$-fibration $M^+ \to X^+$.
Therefore $\CEnd(\CR_0) \cong \mu_+^*\CB'$, where $\CB' = (\mu_+)_*\CEnd(\CR_0)$.
So, we have to check that $\CB'$ is Morita-equivalent to $\CB^+$.
Consider the sheaf $\CF = (\mu_+)_*\CR_0$. Let us check that it has a structure
of a $\CB'-\CB^+$-bimodule and gives a Morita-equivalence.

Choose an \`etale covering $u:U \to X^+$ such that $M^+_U = M^+\times_{X^+}U$ is a projectivization
of a vector bundle $\CE$ on $U$. Let $u:M^+_U \to M^+$ and $\mu_U:M^+_U \to U$ denote the projections:
$$
\xymatrix{
M_U^+ \ar[r]^u \ar[d]_{\mu_U} & M^+ \ar[d]^{\mu_+} \\ U \ar[r]^u & X^+
}
$$
Note that the bundle $u^*\CR_0\otimes\CO_{M^+_U/U}(-1)$ restricts trivially to all fibers
of $M^+_U$ over $U$. Hence there exists a bundle $\CR_U$ on $U$ such that
$$
u^*\CR_0 \cong \mu_U^*\CR_U \otimes \CO_{M^+_U/U}(1).
$$
Therefore $u^*(\mu_+)_*\CR_0 \cong (\mu_U)_*u^*\CR_0 \cong \CR_U\otimes\CE^*$.
On the other hand,
\begin{multline*}
u^*\CB' \cong
u^*(\mu_+)_*\CEnd(\CR_0) \cong
(\mu_U)_*u^*\CEnd(\CR_0) \cong \\ \cong
(\mu_U)_*\CEnd(\mu_U^*\CR_U \otimes \CO_{M^+_U/U}(1)) \cong
(\mu_U)_*\mu_U^*\CEnd(\CR_U) \cong
\CEnd(\CR_U),
\end{multline*}
while $u^*\CB^+ \cong \CEnd(\CE)$ by definition of $\CB^+$.
We see that $u^*(\mu_+)_*\CR_0$ gives a Morita equivalence between
$u^*\CB'$ and $u^*\CB^+$. Since $u:U \to X^+$ is an \`etale covering,
we conclude that $\CB'$ and $\CB^+$ are Morita-equivalent as well.
\end{proof}

From now on we replace $\CB^+$ by a Morita-equivalent Azumaya algebra $\CB'$.
This change does not spoil the decomposition of Theorem~\ref{dbmp},
it still holds after the change. On the other hand, after the change
we have
\begin{equation}\label{erbp}
\CEnd(\CR_0) \cong \mu_+^*\CB^+.
\end{equation}
Now we can relate (this new) $\CB^+$ and $\CB_0$.

\begin{lemma}
We have $(f_+)_*\CB^+ \cong \CB_0$, an isomorphism of sheaves of algebras.
\end{lemma}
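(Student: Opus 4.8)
The plan is to compare $(f_+)_*\CB^+$ with the pushforward $\rho_*\CEnd_{\CO_M}(\CS_0)$, which equals $\CB_0$ by Corollary~\ref{rhosds}, by passing through the common blowup $\TM$ of~\eqref{dia}. First I would reduce to $\CEnd(\CR_0)$: since $\mu_+:M^+\to X^+$ is a $\PP^1$-fibration one has $R(\mu_+)_*\CO_{M^+}=\CO_{X^+}$, so the projection formula together with~\eqref{erbp} gives $(\mu_+)_*\CEnd(\CR_0)\cong\CB^+$ as sheaves of algebras; applying $(f_+)_*$ and using $\rho_+=f_+\circ\mu_+$ yields an isomorphism of sheaves of algebras $(f_+)_*\CB^+\cong(\rho_+)_*\CEnd(\CR_0)$. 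It then remains to identify $(\rho_+)_*\CEnd(\CR_0)$ with $\CB_0$.

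Next I would introduce the sheaf $\CT:=\CHom(\xi^*\CS_0,\xi_+^*\CR_0)=\xi^*\CS_0^\vee\otimes\xi_+^*\CR_0$ on $\TM$. Using the inclusion $\iota:\xi^*\CS_0\hookrightarrow\xi_+^*\CR_0$ from~\eqref{sr} we get a monomorphism $\xi_+^*\CEnd(\CR_0)\hookrightarrow\CT$, $\phi\mapsto\phi\circ\iota$, which is left $\xi_+^*\CEnd(\CR_0)$-linear; tensoring~\eqref{rsd} by $\xi_+^*\CR_0$ identifies its cokernel with $\bigoplus_i\CO_{E_i}(-1,0)\otimes\xi_+^*\CR_0$. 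Dually, $\iota$ induces a monomorphism $\xi^*\CEnd(\CS_0)\hookrightarrow\CT$, $\psi\mapsto\iota\circ\psi$, which is right $\xi^*\CEnd(\CS_0)$-linear; tensoring~\eqref{sr} by $\xi^*\CS_0^\vee$ identifies its cokernel with $\bigoplus_i\CO_{E_i}(0,-1)\otimes\xi^*\CS_0^\vee$. Now I would push everything forward along $\rho\circ\xi=\rho_+\circ\xi_+$. Since $R\xi_*\CO_{\TM}=\CO_M$ and $R(\xi_+)_*\CO_{\TM}=\CO_{M^+}$, the two subsheaves push forward to $R(\rho_+)_*\CEnd(\CR_0)$ and $R\rho_*\CEnd(\CS_0)$ respectively. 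The key computation is that the two cokernels die: by Lemma~\ref{sks} one gets $\xi_+^*\CR_0|_{E_i}\cong\CO_{E_i}\oplus\CO_{E_i}(0,-1)$ and $\xi^*\CS_0^\vee|_{E_i}\cong\CO_{E_i}\oplus\CO_{E_i}(-1,0)$, so on each $E_i\cong\PP^2\times\PP^1$ both cokernels are direct sums of the line bundles $\CO_{E_i}(-1,0)$, $\CO_{E_i}(0,-1)$, $\CO_{E_i}(-1,-1)$, all of which are acyclic by K\"unneth (using $H^\bullet(\PP^2,\CO(-1))=0$ and $H^\bullet(\PP^1,\CO(-1))=0$); since $\rho\circ\xi$ contracts $E_i$ to $y_i$, their $R(\rho\circ\xi)_*$ vanishes. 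Hence both monomorphisms become isomorphisms after $R(\rho\circ\xi)_*$, giving a chain
\[
(\rho_+)_*\CEnd(\CR_0)\ \xrightarrow{\ \sim\ }\ R(\rho\circ\xi)_*\CT\ \xleftarrow{\ \sim\ }\ \rho_*\CEnd(\CS_0)
\]
(all higher direct images vanish since the outer terms are sheaves).

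To promote this to an isomorphism of \emph{algebras}, I would observe that $\CN:=R(\rho\circ\xi)_*\CT$ is a bimodule over the algebras $A:=(\rho_+)_*\CEnd(\CR_0)$ (acting on the left) and $B:=\rho_*\CEnd(\CS_0)$ (acting on the right), and by the displayed chain it is free of rank one over each of them, with the same generator: the image of $\id_{\CR_0}$ under $A\to\CN$ and the image of $\id_{\CS_0}$ under $B\to\CN$ both equal the class of $\iota$. A formal manipulation of such a bimodule (writing $a\cdot\iota=\iota\cdot\beta(a)$ and using associativity and injectivity of $b\mapsto\iota\cdot b$) shows the induced bijection $\beta:A\to B$ is unital and multiplicative. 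Finally, $B=\rho_*\CEnd_{\CO_M}(\CS_0)\cong\CB_0$ as sheaves of algebras: this isomorphism is the one induced by the $\CB_0$-module structure on $\CS_0$, and its bijectivity is Corollary~\ref{rhosds} with $k=l=0$ (see also~\cite{K08a}). Composing with the first step gives $(f_+)_*\CB^+\cong\CB_0$.

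The part I expect to be the main obstacle is the bookkeeping forced by the fact that $M\dashrightarrow M^+$ is only birational: $\CS_0$ and $\CR_0$ genuinely differ along the exceptional divisors $E_i$, and one must check that every discrepancy sheaf is swept away by $R(\rho\circ\xi)_*$, which ultimately rests on the vanishing of the cohomology along the contracted $\PP^2$-factor of each $E_i$. Matching the multiplicative structures directly would be fiddly as well, but the bimodule observation makes it automatic once the two one-sided module structures on $\CT$ are set up with the common generator $\iota$.
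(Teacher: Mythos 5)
Your proof is essentially the paper's: you reduce $(f_+)_*\CB^+$ to $(\rho_+)_*\CEnd(\CR_0) = \rho_*\xi_*\xi_+^*\CEnd(\CR_0)$ via the projection formula and \eqref{erbp}, exhibit both $\xi_+^*\CEnd(\CR_0)$ and $\xi^*\CEnd(\CS_0)$ as subsheaves of $\xi^*\CS_0^\vee\otimes\xi_+^*\CR_0$ using \eqref{rsd} and \eqref{sr}, show both cokernels are supported on the $E_i$ with restrictions made of $\CO_{E_i}(-1,0)$, $\CO_{E_i}(0,-1)$, $\CO_{E_i}(-1,-1)$ (hence killed by $R(\rho\circ\xi)_*$), and then invoke $\rho_*\CEnd(\CS_0)\cong\CB_0$. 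The only addition is your explicit bimodule argument for multiplicativity, which the paper leaves implicit; and the restriction $\xi_+^*\CR_0|_{E_i}\cong\CO_{E_i}\oplus\CO_{E_i}(0,-1)$ actually comes from the construction of $\CR_k$ rather than from Lemma~\ref{sks}, though the claim itself is correct.
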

\begin{proof}
First of all
$$
(f_+)_*\CB^+ \cong
(f_+)_*(\mu_+)_*(\xi_+)_*\xi_+^*\mu_+^*\CB^+ \cong
f_*\mu_*\xi_*\xi_+^*\CEnd(\CR_0) \cong
\rho_*\xi_*\xi_+^*\CEnd(\CR_0).
$$
On the other hand, tensoring~\eqref{sr} with $\xi^*\CS_0^\vee$ and~\eqref{rsd} with $\xi_+^*\CR_0$ we obtain sequences
$$
0 \to \xi^*\CEnd(\CS_0) \to \xi^*\CS_0^\vee\otimes\xi_+^*\CR_0 \to \oplus (\CO_{E_i}(0,-1) \oplus \CO_{E_i}(-1,-1)) \to 0,
$$
$$
0 \to \xi_+^*\CEnd(\CR_0) \to \xi^*\CS_0^\vee\otimes\xi_+^*\CR_0 \to \oplus (\CO_{E_i}(-1,0) \oplus \CO_{E_i}(-1,-1)) \to 0.
$$
Pushing these sequences along $\rho\circ\xi$ and noting that $E_i$ is contracted to a point we conclude that
$$
\rho_*\xi_*\xi_+^*\CEnd(\CR_0) \cong
\rho_*\xi_*(\xi^*\CS_0^\vee\otimes\xi_+^*\CR_0) \cong
\rho_*\xi_*\xi^*\CEnd(\CS_0).
$$
On the other hand $\rho_*\xi_*\xi^*\CEnd(\CS_0) \cong \rho_*\CEnd(\CS_0) \cong \CB_0$ (see \ref{rhosds} for the last isomorphism).
\end{proof}

The isomorphism $(f_+)_*\CB^+ \cong \CB_0$ gives by adjunction a morphism $f_+^*\CB_0 \to \CB^+$
which equips $\CB^+$ with a structure of a $\CB_0$-module. Now we define the functor
$\D^b(Y,\CB_0) \to \D^b(X^+,\CB^+)$ by $F \mapsto F\otimes_{\CB_0}\CB^+$.
The right adjoint functor then is $(f_+)_*:\D^b(X^+,\CB^+) \to \D^b(Y,\CB_0)$.
Their composition takes $F$ to
$$
(f_+)_*(f_+^*F \otimes_{\CB_0}\CB^+) \cong
F \otimes_{\CB_0}(f_+)_*\CB^+ =
F \otimes_{\CB_0} \CB_0 = F.
$$
This implies that this functor is fully faithful, and the orthogonal
to its image consists of all objects $G$ such that $(f_+)_*G = 0$.
Since $f_+$ is the contraction of $(-1,-1)$ curves $C_i$, any object $G$
in $\D^b(X^+)$ such that $(f_+)_*G = 0$ is a complex with cohomology being direct sums of $\CO_{C_i}(-1)$.
Since $\CB^+$ is an Azumaya algebra, the forgetful functor $\D^b(X^+,\CB^+) \to \D^b(X^+)$
commutes with sheaf cohomology, hence each of these direct sums of $\CO_{C_i}(-1)$ should be a $\CB^+$-module.
So, it remains to check that there is no such $\CB^+$-modules.

For this we note that~\eqref{rsi} implies $\CEnd(\CR_0)_{|\Tsi^-_i} \cong \CO_{\Tsi_i^-} \oplus \CO_{\Tsi_i^-}(h-l) \oplus \CO_{\Tsi_i^-}(l-h) \oplus \CO_{\Tsi_i^-}$.
Therefore we have $\CB^+_{|C_i} \cong \CO_{C_i} \oplus \CO_{C_i}(1) \oplus \CO_{C_i}(-1) \oplus \CO_{C_i} \cong \CEnd(\CO_{C_i} \oplus \CO_{C_i}(-1))$,
hence the category of $\CB^+$-modules supported on $C_i$ is equivalent to the category of sheaves on $C_i$,
the equivalence taking a sheaf $F$  to $F\otimes(\CO_{C_i} \oplus \CO_{C_i}(-1)) \cong F \oplus F(-1)$.
It is clear that if $F \oplus F(-1)$ is a direct sum of $\CO(-1)$ then $F = 0$.
Thus we have checked that $\D^b(Y,\CB_0) \cong \D^b(X^+,\CB^+)$.

Finally, we compute the composition of the equivalence $\D^b(Y,\CB_0) \cong \D^b(X^+,\CB^+)$,
and embeddings $\D^b(X^+,\CB^+) \to \D^b(M^+) \to \D^b(M)$. It acts on $F \in \D^b(Y,\CB_0)$ as
$$
F \mapsto
\xi_*\xi_+^*(\mu_+^*(f_+^*F\otimes_{\CB_0} \CB^+)\otimes_{\CB^+} \CR_0) \cong
\xi_*\xi_+^*(\rho_+^*F\otimes_{\CB_0} \CR_0) \cong
\xi_*(\xi_+^*\rho_+^*F \otimes_{\CB_0} \xi_+^*\CR_0).
$$
Note that $\xi_+^*\rho_+^*F \cong \xi^*\rho^*F$, so tensoring~\eqref{sr} by it we obtain
$$
\xi^*\rho^*F \otimes_{\CB_0} \xi^* \CS_0 \to \xi^*\rho^*F \otimes_{\CB_0} \xi_+^* \CR_0 \to \oplus (\xi^*\rho^*F \otimes_{\CB_0} \CO_{E_i}(0,-1)).
$$
Since $\xi_*\CO_{E_i}(0,-1) = 0$, applying $\xi_*$ we see that
$$
\xi_*(\xi^*\rho^*F \otimes_{\CB_0} \xi^* \CS_0) \cong \rho^*F \otimes_{\CB_0} \CS_0,
$$
which gives the required isomorphism of functors.
This proves Theorem~\ref{mt}.

\section{Concluding remarks and further questions}\label{s-f}

\begin{remark}\label{flop}
There is another way of proving Theorem~\ref{mt}, avoiding use of Moishezon varieties.
For this one has to perform another birational modification of $M$.
First, consider the blowup $M' \to M$ in points $P_i = \Sigma_i^+ \cap \Sigma_i^-$.
Let $E'_i \cong \PP^3$ be the exceptional divisors of this blowup.
Then the proper preimages of the planes $\Sigma_i^+$ and $\Sigma_i^-$ are
Hirzebruch surfaces $\Tsi_i^-,\Tsi_i^+ \subset M'$ which do not intersect.
Moreover, the $(-1)$-curves $L_i^\pm$ on $\Tsi_i^\pm$ are skew-lines in $E'_i$.
One can check that the normal bundle to $\Tsi_i^\pm$ in $M'$ restricts as
$\CO(-1)\oplus\CO(-1)$ to any fiber of $\Tsi_i^\pm$ over $\PP^1$. Hence one can
make a (relative over $\PP^1$) flop in all $2N$ surfaces $\Tsi_i^\pm$ simultaneously.
We will obtain an algebraic variety $M''$ over $X$. The special fibers $M''_{x_i}$
will coincide with blowups $E''_i$ of $E'_i$ in the lines $L_i^\pm$ (and each of the surfaces
$\Tsi_i^\pm$ will be replaced by $\PP^1\times\PP^1$, coinciding with the exceptional
divisor of $E''_i$ over $L_i^\pm$). Then by the same arguments as in Proposition~\ref{mp}
one can show that the map $M'' \to X$ factors as a $\PP^1$-fibration $M'' \to X'$,
where $X'$ is the blowup of $X$ in all points $x_i$. Then a careful analysis of the relation
of the categories $\D^b(M'')$ and $\D^b(M)$ allows to prove Theorem~\ref{mt}.
\end{remark}

An interesting question for the further investigation is to describe the category $\D^b(M)$
without restrictions on the dimension of $Y$. A natural approach would be to consider first
the universal family of quadrics and then to apply a base change argument (see~\cite{K07})
to obtain a decomposition in general case. Unfortunately, the approach of this paper
does not work in this general setup because of the following effect --- assume for simplicity
that $D_3 = \emptyset$, but $\dim D_2 > 0$. Since the fibers of $M$ over $D_2$ are the unions
of two planes, we have an unramified double covering $\tilde{D_2} \to D_2$.
This covering in general is connected. Therefore, we cannot pick up one of the planes
$\Sigma^+$ in all the fibers over $D_2$ and make a flip in them. However, the approach
suggested in~\ref{flop} may work, and I guess that in case $D_3 = \emptyset$ should
work without big changes. As for the case of nonempty $D_3$, a deeper analysis
of the behavior of $M$ over $D_3$ (and possibly more birational transformations)
is required.

Another question which may prove interesting is investigation
of the derived category of the relative scheme of lines (or other isotropic Grassmannians)
of a family of quadrics of dimension bigger than 2.

\end{document}